\pdfoutput=1
\documentclass[11pt]{article}

\usepackage[a4paper,margin=1in]{geometry}
\usepackage{amsfonts,amssymb}
\usepackage{amsthm}
\usepackage{graphicx}
\usepackage{mathtools}
\usepackage{booktabs}
\usepackage{longtable}
\usepackage{tabularx}
\usepackage{float}
\usepackage{algorithm}
\usepackage{algpseudocode}
\usepackage[utf8]{inputenc}
\usepackage[numbers,sort&compress]{natbib}
\usepackage{enumitem}
\usepackage{lmodern}
\usepackage{microtype}
\usepackage{subcaption}
\usepackage[section]{placeins}
\usepackage{ifpdf}
\usepackage[colorlinks=true,linkcolor=blue,citecolor=blue,urlcolor=blue]{hyperref}
\usepackage[capitalize,noabbrev]{cleveref}
\usepackage{tikz}
\usetikzlibrary{arrows.meta,positioning,shapes.geometric,calc,decorations.pathreplacing}

\ifpdf
  \DeclareGraphicsExtensions{.pdf,.png,.jpg,.eps}
\else
  \DeclareGraphicsExtensions{.eps}
\fi
\graphicspath{{figures/}}

\newcommand{\R}{\mathbb{R}}

\newcommand{\argmin}{\operatorname*{arg\,min}}

\newcommand{\pred}{\mathrm{pred}}
\newcommand{\ared}{\mathrm{ared}}

\newcommand{\qvec}{\operatorname{qvec}}
\newcommand{\clip}{\operatorname{clip}}
\newcommand{\ctrim}{c_{\mathrm{trim}}}
\newcommand{\Bgeo}{B_\phi^{\mathrm{geo}}}
\newcommand{\Dgeo}[1]{\Delta_{\mathrm{geo},#1}}

\newtheorem{theorem}{Theorem}[section]
\newtheorem{lemma}[theorem]{Lemma}
\newtheorem{proposition}[theorem]{Proposition}
\newtheorem{corollary}[theorem]{Corollary}
\newtheorem{assumption}[theorem]{Assumption}
\theoremstyle{definition}
\newtheorem{definition}[theorem]{Definition}
\theoremstyle{remark}
\newtheorem{remark}[theorem]{Remark}
\crefname{assumption}{Assumption}{Assumptions}
\Crefname{assumption}{Assumption}{Assumptions}
\crefname{remark}{Remark}{Remarks}
\Crefname{remark}{Remark}{Remarks}

\allowdisplaybreaks

\begin{document}

\title{BUP-TR: Bayesian Underdetermined Projection Trust-Region Methods for Derivative-Free Optimization}
\author{Wei Hu\thanks{LSEC, ICMSEC, Academy of Mathematics and Systems Science, Chinese Academy of Sciences, Beijing 100190, China; University of Chinese Academy of Sciences, Beijing 100049, China. Email: \texttt{huwei@amss.ac.cn}.}
\and Pengcheng Xie\thanks{Lawrence Berkeley National Laboratory, Berkeley, CA 94720, USA. Email: \texttt{pxie98@gmail.com}.}
\and Ya-Xiang Yuan\thanks{LSEC, ICMSEC, Academy of Mathematics and Systems Science, Chinese Academy of Sciences, Beijing 100190, China. Email: \texttt{yyx@lsec.cc.ac.cn}.}
\and Li Zhang\thanks{LSEC, ICMSEC, Academy of Mathematics and Systems Science, Chinese Academy of Sciences, Beijing 100190, China. Email: \texttt{zhangli2022@lsec.cc.ac.cn}.}}
\date{}

\maketitle

\begin{abstract}
Underdetermined quadratic interpolation is a central model-construction tool in
model-based derivative-free trust-region methods: it limits sampling costs but
leaves an affine family of interpolating quadratics. Classical solvers select
one element of this family by prescribing a fixed norm or model-change measure,
such as the least-Frobenius-change Hessian update in Powell-type methods. We
introduce BUP-TR (Bayesian Underdetermined Projection Trust-Region), which
instead completes the model by projecting a prior quadratic onto the affine
interpolation set in the precision norm supplied by the prior. The same
precision matrix defines a spectral geometry certificate, MAP-poisedness, and a
repair mechanism for interpolation sets. Under standard smoothness assumptions,
uniform precision bounds, MAP-poisedness, and a trust-region-scale
prior-accuracy condition, the hard-MAP models are fully linear. Consequently,
BUP-TR attains global first-order convergence and
\(O(\varepsilon^{-2})\) evaluation complexity, with geometry-repair evaluations
included. A NEWUOA-style implementation, BUP-NEWUOA, improves fixed-budget
performance on the reported benchmark suite at moderate and stringent accuracy
targets while retaining the computational structure of a Powell-type
trust-region method.

\par\smallskip\noindent\textbf{Keywords.} derivative-free optimization; trust-region methods; quadratic interpolation; Bayesian model completion; geometry management
\par\smallskip\noindent\textbf{MSC 2020.} 90C56; 65K05; 90C30
\end{abstract}

\section{Introduction}\label{sec:intro}

Derivative-free optimization (DFO) concerns optimization problems in which
reliable derivative information is unavailable. We consider the
unconstrained black-box problem
\begin{equation}\label{eq:prob}
    \min_{x\in\mathbb{R}^n} f(x),
\end{equation}
where the objective can be queried only through function values. Such
problems arise when one evaluation requires a numerical simulation, an
engineering experiment, or a legacy code whose derivatives are unavailable
or too costly to approximate.

Model-based trust-region methods form a standard class of
algorithms for this setting. At iteration $k$, the algorithm builds a local
model from previously queried function values, computes a trial point by
approximately minimizing this model in a trust region, and accepts or
rejects the trial point according to the ratio of actual to predicted
reduction. For interpolation-based DFO, the quality of the local model is
therefore central: the model is the object from which the trial step is
computed.

\subsection{Model completion in underdetermined interpolation}\label{subsec:motivation}

Quadratic interpolation models are attractive because they can represent
curvature without derivative evaluations. A quadratic polynomial in
$\mathbb{R}^n$ has
\begin{equation}\label{eq:qdim}
    q=\frac{(n+1)(n+2)}{2}
\end{equation}
coefficients. A fully determined quadratic interpolation model therefore
requires $q$ independent function values, which is already $231$ when
$n=20$. In expensive black-box optimization, one often works with fewer
interpolation points than coefficients.

Let $c\in\mathbb{R}^q$ denote the coefficient vector of a local quadratic
model and write the interpolation equations as
\begin{equation}\label{eq:intro-Ac-b}
    A_k c=b_k.
\end{equation}
When the system is underdetermined, its feasible solutions form an affine
set. A second criterion is then needed to choose one interpolating
quadratic. We call this step \emph{model completion}: selecting one model
from the affine family satisfying \eqref{eq:intro-Ac-b}.

Many established underdetermined models fit this description.
Conn--Toint-type minimum-norm models, Powell's minimum-change Hessian
models, least $H^2$-norm updating models, and regional minimal updating
models impose interpolation and then minimize a prescribed norm or change
measure over the remaining freedom
\cite{conn2009introduction,powell2006newuoa,wild2008mnh,xie2025underdetermined,xie2025h2,xie2026remu}.
At a schematic level, these rules take the form
\begin{equation}\label{eq:intro-completion-template}
    \min_{c}\; \frac12\|c-c_k^{\rm ref}\|_{B_k}^2
    \quad \text{subject to}\quad A_kc=b_k,
\end{equation}
where the reference vector and the matrix $B_k$ encode the completion
criterion. The penalized blocks and the reference model differ from one
construction to another.

Thus the completion criterion is itself a modeling choice. This observation
motivates the question considered here: can the metric in
\eqref{eq:intro-completion-template} be selected from information already
available at iteration $k$?

\subsection{Prior models for completion}\label{subsec:history}

A trust-region run usually stores more function-value information than the
active interpolation set. Some evaluated points have been removed from the
current set; previous models have produced successful or unsuccessful
steps; local least-squares or surrogate fits may contain useful curvature
information. Classical minimum-norm and minimum-change rules remain simple,
stable, and effective because their completion metrics are fixed in
advance. BUP-TR keeps the constrained-completion formulation and lets a
prior model define the metric used at the current iteration.

In coefficient form, the proposed rule is
\begin{equation}\label{eq:intro-prior-completion}
    \min_c\;\frac12\|c-c_k^\pi\|_{W_k}^2
    \quad\text{subject to}\quad A_kc=b_k,
\end{equation}
where $c_k^\pi$ is a prior coefficient vector and $W_k\succ0$ is a
precision matrix. These objects may be obtained from a local surrogate,
least-squares curvature statistics, or a previously accepted model. With a
Gaussian prior on $c$ and noiseless interpolation constraints,
\eqref{eq:intro-prior-completion} is the maximum a posteriori (MAP)
estimator. The MAP interpretation supplies the reference vector and the
metric; the trial step and the acceptance test remain those of a standard
trust-region method.

This places BUP-TR in the model-construction line of Powell,
Conn--Toint, and subsequent underdetermined-model work. The change is local:
the norm used to complete the interpolation model is supplied by a prior at
the current iteration. The completed model is then used for the usual
trust-region purpose of generating a reliable trial step.

\subsection{MAP-poisedness and geometry management}\label{subsec:geometry-bottleneck}

The accuracy of an interpolation model also depends on the geometry of the
interpolation set. Classical DFO theory uses $\Lambda$-poisedness and
related conditioning requirements to obtain fully-linear or
fully-quadratic accuracy in a trust region \cite{conn2009introduction}.
Implementations maintain this geometry by point replacement, geometry
checks, and occasional model-improvement steps.

For the completion problem \eqref{eq:intro-prior-completion}, the geometry
condition should be stated for the metric used in that problem. In the
scaled coordinates used throughout this paper, let $\widehat A(Y)$ be the
design matrix associated with a candidate set $Y$, and let $\widehat W_k$
be the precision matrix in the completion step. We define
\begin{equation}\label{eq:intro-map-geom-matrix}
    \widehat M_k(Y)=\widehat A(Y)\widehat W_k^{-1}\widehat A(Y)^\top.
\end{equation}
A set is called $\mu_M$-\emph{MAP-poised} if
\begin{equation}\label{eq:intro-map-poised}
    \lambda_{\min}\bigl(\widehat M_k(Y)\bigr)\ge \mu_M.
\end{equation}
When $\widehat W_k=I$, this condition becomes a lower bound on the smallest
singular value of the scaled design matrix, the spectral form behind usual
poisedness requirements. For general $\widehat W_k$, MAP-poisedness plays
the role of poisedness for the MAP completion problem: it is the geometry
condition under which the completed model is stable and fully linear.
The formal definition and its use in the repair mechanism are given in
Section~\ref{subsec:framework-geometry}.

\subsection{The BUP-TR method}\label{subsec:approach}

At iteration $k$, BUP-TR writes the local quadratic model in the scaled
variable $s=\Delta_k u$ and denotes the corresponding coefficient vector by
$\widehat c\in\mathbb{R}^q$. The interpolation equations become
\begin{equation}\label{eq:intro-normalized-interp}
    \widehat A_k\widehat c=b_k.
\end{equation}
Given a prior mean $\widehat c_k^\pi$ and a precision matrix
$\widehat W_k\succ0$, BUP-TR selects
\begin{equation}\label{eq:intro-map-projection}
    \widehat c_k
    =\argmin_{\widehat c}\;\frac12
    \|\widehat c-\widehat c_k^\pi\|_{\widehat W_k}^2
    \quad\text{subject to}\quad
    \widehat A_k\widehat c=b_k .
\end{equation}
The interpolation values determine the feasible set, while the prior mean
and precision choose one point in that set. The precision matrix also
enters the MAP-poisedness test \eqref{eq:intro-map-poised} and the ranking
of repair candidates. Embedding these components in a Powell-type
trust-region implementation gives the solver BUP-NEWUOA used in
Section~\ref{sec:experiments}.

\subsection{Related work}\label{subsec:related}

BUP-TR belongs primarily to the literature on model-based DFO and
underdetermined quadratic interpolation. Powell's UOBYQA constructs a fully
determined quadratic interpolation model; NEWUOA and BOBYQA showed that
powerful solvers can be built with fewer interpolation points by updating a
quadratic model through a minimum-change Hessian criterion
\cite{powell2002uobyqa,powell2006newuoa,powell2009bobyqa}. Conn--Toint and
minimum-norm Hessian models provide related ways of resolving the
undetermined coefficients, and modern implementations such as PDFO/PRIMA
and DFO-LS have improved the robustness and accessibility of this class of
methods \cite{wild2008mnh,ragonneau2024pdfo,zhang2025prima,cartis2019dfols}.
Recent work has modified the model subproblem itself, including
trust-region-iteration-based updates, Sobolev-norm updates, and regional
minimal updating \cite{xie2025underdetermined,xie2025h2,xie2026remu,XieYuan2023}. The
present paper follows this model-subproblem perspective and introduces a
prior-defined metric into the completion rule.

Probabilistic models have also been used in optimization. Probabilistic
trust-region methods prove convergence under sufficiently accurate model
events, while Bayesian optimization usually chooses new sample locations by
optimizing an acquisition function
\cite{bandeira2014convergence,rasmussen2006gp,snoek2012practical,shahriari2016taking,frazier2018tutorial}.
In BUP-TR, probabilistic modeling enters through local model construction: the
prior defines the reference vector and the metric for completion, and the
resulting model is passed to the usual trust-region step computation and
acceptance test.

Subspace DFO methods address high dimensionality from another direction by
building models in lower-dimensional spaces or by using subspace steps
\cite{zhang2025subspace}. Such ideas are complementary to BUP-TR, which
focuses on completing an underdetermined quadratic model after the working
interpolation set has been selected.

\subsection{Contributions and organization}\label{subsec:contrib}

The main contributions are as follows.
\begin{itemize}[leftmargin=2em]
\item \emph{A model-completion view of underdetermined interpolation.}
We formulate underdetermined quadratic interpolation as the problem of
selecting one model from the affine family satisfying the interpolation
equations. This viewpoint places minimum-norm, minimum-change,
Sobolev-norm, and regional minimal updating models into a common completion
framework.

\item \emph{Prior-regularized completion.}
We replace the prescribed completion norm by a positive definite metric
defined by a prior model. The reference vector and metric can be constructed
from information already available during the run, such as local surrogate
fits, least-squares curvature estimates, or previously accepted models.

\item \emph{MAP-poisedness for geometry management.}
We introduce MAP-poisedness, a spectral conditioning condition for the MAP
completion problem. It plays the role of poisedness in the fully-linear
analysis and is used to filter and rank geometry-repair candidates.

\item \emph{Convergence and evaluation complexity.}
For the hard-MAP version, we prove that the completed models are fully
linear under standard smoothness, geometry, and prior-accuracy assumptions.
The resulting trust-region method satisfies global first-order convergence
and an $\mathcal{O}(\varepsilon^{-2})$ evaluation-complexity bound, with
repair evaluations included.

\item \emph{A NEWUOA-style implementation and numerical validation.}
We implement the construction in a Powell-type trust-region solver,
BUP-NEWUOA, and compare it with NEWUOA, UOBYQA, Nelder--Mead, and CMA-ES on
a benchmark suite of 85 problem--dimension pairs.
\end{itemize}

Section~\ref{sec:prelim} introduces the scaled quadratic representation and
the trust-region framework. Section~\ref{sec:bup} presents the completion
rule, prior constructions, MAP-poisedness repair, and the algorithm.
Section~\ref{sec:conv} proves convergence and evaluation complexity.
Section~\ref{sec:experiments} reports the numerical results. Proofs and
supplementary material are collected in the appendices.

\section{Trust-Region Interpolation Preliminaries}\label{sec:prelim}

\paragraph{Notation.}
Unless otherwise stated, vector norms are Euclidean norms.
For matrices, $\|\cdot\|_2$ and $\|\cdot\|_F$ denote the spectral and Frobenius norms, respectively.
For a symmetric matrix $H$, $\operatorname{vech}(H)$ denotes the half-vectorization formed by stacking the diagonal and upper-triangular entries in a fixed order.
Table~\ref{tab:notation} collects the principal symbols used
throughout the paper; each symbol is defined formally in the section indicated.

\begin{table}[t]
\caption{Principal notation.}\label{tab:notation}
\centering\small
\begin{tabular}{@{}lll@{}}
\toprule
Symbol & Meaning & Defined in \\
\midrule
\multicolumn{3}{@{}l}{\textbf{Problem and Trust Region}}\\
$n$ & ambient dimension & \S\ref{subsec:prelim-tr}\\
$f$ & objective function & \eqref{eq:prob}\\
$x_k,\;\Delta_k$ & TR center and radius at iteration $k$ & \S\ref{subsec:prelim-tr}\\
$\Delta_{\max}$ & maximum radius & \S\ref{subsec:prelim-tr}\\
$\ctrim$ & geometry-radius factor ($\Dgeo{k}=\ctrim\Delta_k$) & \S\ref{subsec:prelim-tr}\\
$Y_k$ & interpolation set ($m+1$ points) & \S\ref{subsec:prelim-tr}\\
$m$ & $|Y_k|-1$; we use the default choice $m=2n$ in the algorithm & \S\ref{subsec:prelim-tr}, \S\ref{subsec:algo-design}\\
\midrule
\multicolumn{3}{@{}l}{\textbf{Coefficient Space and Models}}\\
$q$ & $\frac{(n+1)(n+2)}{2}$; number of quadratic coefficients & \eqref{eq:qdim}\\
$\widehat\phi(u)$ & scaled feature vector & \eqref{eq:phihat}\\
$\widehat c_k$ & scaled coefficient vector (MAP solution) & \eqref{eq:chat-def}\\
$\widehat A_k$ & scaled design matrix & \S\ref{subsec:prelim-norm}\\
$b_k$ & sampled function-value vector & \S\ref{subsec:prelim-vec}\\
$m_k(s)$ & quadratic model at iteration $k$ & \eqref{eq:quad-model}\\
$g_k,\;H_k$ & model gradient and Hessian at $s=0$ & \S\ref{subsec:prelim-cauchy}\\
\midrule
\multicolumn{3}{@{}l}{\textbf{Prior-Regularized MAP Completion}}\\
$\widehat c_k^\pi$ & prior mean in the scaled coefficient representation & \eqref{eq:bup-completion}\\
$\widehat W_k$ & precision matrix ($\widehat\Sigma_k^{-1}$) & \eqref{eq:bup-completion}\\
$w_{\min},\;w_{\max}$ & spectral bounds on $\widehat W_k$ & \eqref{eq:bup-W-bounds}\\
$\widehat c_{k'}$ & accepted-model coefficient vector used to form the prior & \eqref{eq:bup-accepted-map-prior}\\
\midrule
\multicolumn{3}{@{}l}{\textbf{Geometry and Repair}}\\
$\widehat M_k(Y)$ & MAP geometry matrix $\widehat A(Y)\widehat W_k^{-1}\widehat A(Y)^\top$ & \eqref{eq:repair-Mhat}\\
$\mu_M$ & MAP-poisedness threshold & \eqref{eq:repair-map-poised}\\
$\mu_0$ & guaranteed MAP-poisedness of fallback set & \eqref{eq:conv-mu0}\\
$T_{\mathrm{try}}$ & incremental repair attempt budget & \S\ref{subsec:repair-when}\\
$Y_k^{\mathrm{fb}}$ & fallback interpolation set & \eqref{eq:repair-fallback-set}\\
\midrule
\multicolumn{3}{@{}l}{\textbf{Convergence Constants}}\\
$B_\phi,\;\Bgeo$ & feature-vector norm bounds ($\|u\|\le 1$ and $\|u\|\le\ctrim$) & \eqref{eq:conv-Bphi-const}, \eqref{eq:conv-Bphi-geo}\\
$\bar\kappa_\pi$ & prior-accuracy constant (in the $\widehat W_k$-norm) & \eqref{eq:conv-prior-acc}\\
$\kappa_e$ & coefficient-error constant & \eqref{eq:conv-ke}\\
$\kappa_f,\;\kappa_g$ & fully-linear function/gradient constants & \eqref{eq:conv-kappas}\\
$H_{\max}$ & model Hessian bound & \eqref{eq:conv-Hmax}\\
$C_{\mathrm{eval}}$ & evaluation-complexity constant & Theorem~\ref{thm:conv-complexity}\\
\midrule
\multicolumn{3}{@{}l}{\textbf{Repair}}\\
$N_{\mathrm{cand}}$ & repair candidate pool size & \S\ref{subsec:repair-when}\\
\bottomrule
\end{tabular}
\end{table}

\subsection{Scaled quadratic representation}\label{subsec:prelim-tr}
	At iteration $k$, the method maintains
	a center $x_k\in\R^n$, a trust-region radius $\Delta_k\in(0,\Delta_{\max}]$, and an interpolation set
	\[
	Y_k=\{y_k^{(0)},y_k^{(1)},\ldots,y_k^{(m)}\}\subset B(x_k,\Dgeo{k}),
	\qquad y_k^{(0)}=x_k,
	\]
	where
	$B(x_k,r):=\{x\in\R^n:\|x-x_k\|_2\le r\}$ and the \emph{geometry radius} is
	$\Dgeo{k}:=\ctrim\,\Delta_k$ with a fixed constant $\ctrim\ge 1$.
	Allowing $\ctrim>1$ is standard in model-based trust-region methods for DFO
	\cite[Ch.~10]{conn2009introduction}.
	The trust-region step itself is still constrained to $\|s\|_2\le \Delta_k$.
	Define displacements
	\[
	s_k^{(i)}:=y_k^{(i)}-x_k,\qquad i=0,1,\ldots,m,
	\qquad\text{so that}\qquad \|s_k^{(i)}\|_2\le \Dgeo{k}.
	\]
	
	We work with quadratic models in displacement form:
	\begin{equation}\label{eq:quad-model}
		m_k(s)=c_{k,0}+c_{k,1}^\top s+\frac12 s^\top C_{k,2}\,s,
		\qquad s\in\R^n,
	\end{equation}
	where $c_{k,0}\in\R$, $c_{k,1}\in\R^n$, and $C_{k,2}\in\mathbb{S}^n$ is symmetric
	(here $\mathbb{S}^n$ denotes the set of real symmetric $n\times n$ matrices).
	
	\phantomsection\label{subsec:prelim-vec}
The MAP completion and geometry analysis in Section~\ref{sec:bup}
	operate on a vectorized coefficient representation; we set up the precise correspondence here.
	Let $\mathrm{vech}(\cdot)$ denote the half-vectorization of a symmetric matrix (stacking the diagonal and upper-triangular entries).
	For $s\in\R^n$, define the quadratic monomial vector compatible with $\mathrm{vech}(\cdot)$:
	\begin{equation}\label{eq:qvec-def}
		\qvec(s):=
		\begin{bmatrix}
			\frac12 s_1^2\\
			\vdots\\
			\frac12 s_n^2\\
			s_1 s_2\\
			\vdots\\
			s_{n-1}s_n
		\end{bmatrix}
		\in\R^{q_H},
		\qquad q_H:=\frac{n(n+1)}{2},
	\end{equation}
	where the off-diagonal terms follow the same ordering as the upper-triangular part in $\mathrm{vech}(\cdot)$.
	Then, for any $H\in\mathbb{S}^n$,
	\begin{equation}\label{eq:qvec-identity}
		\qvec(s)^\top \mathrm{vech}(H)=\frac12 s^\top H s.
	\end{equation}
	
	Define the coefficient vector
	\begin{equation}\label{eq:cvec}
		c_k:=
		\begin{bmatrix}
			c_{k,0}\\
			c_{k,1}\\
			\mathrm{vech}(C_{k,2})
		\end{bmatrix}
		\in\R^q,
		\qquad q=\frac{(n+1)(n+2)}{2}=1+n+q_H,
	\end{equation}
	and the (unscaled) feature vector
	\begin{equation}\label{eq:phi-unnorm}
		\phi(s):=
		\begin{bmatrix}
			1\\
			s\\
			\qvec(s)
		\end{bmatrix}\in\R^q,
		\qquad\text{so that}\qquad m_k(s)=\phi(s)^\top c_k.
	\end{equation}
	
	Imposing interpolation on $Y_k$ yields
	\begin{equation}\label{eq:interp-unnorm}
		m_k\!\big(s_k^{(i)}\big)=f\!\big(y_k^{(i)}\big),\qquad i=0,1,\ldots,m.
	\end{equation}
	Let $b_k\in\R^{m+1}$ collect the sampled values $(b_k)_i:=f(y_k^{(i)})$, and define the design matrix $A_k\in\R^{(m+1)\times q}$ row-wise by
	\[
	(A_k)_{i,:}:=\phi\!\big(s_k^{(i)}\big)^\top,\qquad i=0,1,\ldots,m.
	\]
	Then \eqref{eq:interp-unnorm} is equivalent to
	\begin{equation}\label{eq:Ac=b}
		A_k c_k=b_k.
	\end{equation}
	When $m+1<q$, the system \eqref{eq:Ac=b} is underdetermined and admits infinitely many solutions.
	
	\phantomsection\label{subsec:prelim-norm}
	To obtain geometry conditions and constants independent of $\Delta_k$, we work in scaled coordinates.
	Define scaled displacements
	\begin{equation}\label{eq:u-def}
		u_k^{(i)}:=\frac{s_k^{(i)}}{\Delta_k},\qquad \|u_k^{(i)}\|_2\le \ctrim,\qquad i=0,1,\ldots,m.
	\end{equation}
	Define the scaled feature vector
	\begin{equation}\label{eq:phihat}
		\widehat\phi(u):=
		\begin{bmatrix}
			1\\
			u\\
			\qvec(u)
	\end{bmatrix}\in\R^q,
	\qquad u\in\R^n.
\end{equation}
	Next, define the scaled coefficient vector $\widehat c_k\in\R^q$ by
	\begin{equation}\label{eq:chat-def}
		\widehat c_k:=
		\begin{bmatrix}
			c_{k,0}\\
			\Delta_k\,c_{k,1}\\
			\Delta_k^2\,\mathrm{vech}(C_{k,2})
		\end{bmatrix}.
	\end{equation}
	Then, for any $s=\Delta_k u$,
	\begin{equation}\label{eq:mhat-identity}
		m_k(\Delta_k u)=\widehat\phi(u)^\top \widehat c_k.
	\end{equation}
	
	Define the scaled design matrix $\widehat A_k\in\R^{(m+1)\times q}$ by
	\[
	(\widehat A_k)_{i,:}:=\widehat\phi\!\big(u_k^{(i)}\big)^\top,\qquad i=0,1,\ldots,m.
	\]
	The interpolation constraints \eqref{eq:interp-unnorm} are equivalently written as
	\begin{equation}\label{eq:Ahat-chat=b}
		\widehat A_k \widehat c_k=b_k.
	\end{equation}
	This scaled representation is the one used throughout the prior-regularized MAP completion and geometry certification in
Section~\ref{sec:bup}.
	
	\subsection{Trust-region decrease condition}\label{subsec:prelim-accept}
	Given $m_k$ and $\Delta_k$, a trial step $s_k$ is computed by approximately solving
	\begin{equation}\label{eq:tr-subprob}
		\min_{\|s\|_2\le \Delta_k} m_k(s).
	\end{equation}
	Define predicted and actual reductions
	\begin{equation}\label{eq:pred-ared}
		\pred_k:=m_k(0)-m_k(s_k),
		\qquad
		\ared_k:=f(x_k)-f(x_k+s_k),
	\end{equation}
	and the safeguarded ratio
	\begin{equation}\label{eq:rho}
		\rho_k:=
		\begin{cases}
			\ared_k/\pred_k, & \text{if }\pred_k>0,\\
			-\infty, & \text{if }\pred_k\le 0.
		\end{cases}
	\end{equation}
	Acceptance and radius updates follow the standard rules: for fixed $0<\eta_1<\eta_2<1$ and $0<\gamma_{\mathrm{dec}}<1<\gamma_{\mathrm{inc}}$,
	\[
	x_{k+1}=
	\begin{cases}
		x_k+s_k, & \text{if }\rho_k\ge \eta_1,\\
		x_k, & \text{otherwise},
	\end{cases}
	\]
	and
	\[
	\Delta_{k+1}=
	\begin{cases}
		\min\{\gamma_{\mathrm{inc}}\Delta_k,\Delta_{\max}\},
		& \text{if }\rho_k\ge \eta_2,\\
		\Delta_k, & \text{if }\eta_1\le \rho_k<\eta_2,\\
		\gamma_{\mathrm{dec}}\Delta_k, & \text{if }\rho_k<\eta_1.
	\end{cases}
	\]
After updating $(x_k,\Delta_k)$, the interpolation set is refreshed and repaired if the geometry certification in
Subsection~\ref{sec:repair} is violated.

\begin{remark}[Radius safeguard in the implementation]\label{rem:radius-safeguard}
In the implementation, after an unsuccessful step we additionally impose
\[
    \Delta_{k+1}
    \leftarrow
    \max\!\left\{
        \gamma_{\mathrm{dec}}\Delta_k,\;
        c_{\mathrm{sg}}\,
        \frac{\|g_k\|_2}{\max\{\|H_k\|_2,1\}}
    \right\},
\]
where $c_{\mathrm{sg}}>0$ is a small constant.
This safeguard prevents premature collapse of the radius when the model gradient is large.
It is not used in the convergence and complexity analysis of Section~\ref{sec:conv}.
\end{remark}
	
	\phantomsection\label{subsec:prelim-cauchy}
	Let
	\[
	g_k:=\nabla m_k(0)=c_{k,1},\qquad H_k:=\nabla^2 m_k(0)=C_{k,2}.
	\]
Our analysis requires a standard Cauchy-type decrease condition on the step $s_k$:
if $g_k\neq 0$, then $\|s_k\|_2\le \Delta_k$ and
\begin{equation}\label{eq:cauchy-dec}
	\pred_k \ \ge\ \frac12\,\|g_k\|_2 \min\!\left\{\Delta_k,\ \frac{\|g_k\|_2}{\|H_k\|_2}\right\},
\end{equation}
with the convention $\|g_k\|_2/\|H_k\|_2:=+\infty$ when $\|H_k\|_2=0$.
This condition is satisfied by standard trust-region subproblem solvers, including the Cauchy step and
truncated conjugate gradients with negative-curvature detection;
it is recorded formally as Assumption~\ref{asmp:conv-cauchy} in Section~\ref{sec:conv}.

\subsection{Evaluation accounting}\label{subsec:prelim-evals}
We count objective evaluations, since these dominate the cost in the
black-box setting.  After the initial interpolation set has been evaluated,
each main iteration uses at most one trial evaluation.  Additional
evaluations may be spent only when the geometry routine adds repair points.
For the first $K$ main iterations we write
\begin{equation}\label{eq:eval-count}
    N_f(K)=\sum_{k=0}^{K-1}
    \bigl(N_k^{\mathrm{trial}}+N_k^{\mathrm{rep,base}}
          +N_k^{\mathrm{rep,crit}}\bigr),
\end{equation}
where $N_k^{\mathrm{trial}}\in\{0,1\}$, $N_k^{\mathrm{rep,base}}$ counts
repair evaluations during the first model-building pass of iteration $k$,
and $N_k^{\mathrm{rep,crit}}$ counts extra repair evaluations caused by
criticality shrinks.  The geometry routine gives a uniform bound on
$N_k^{\mathrm{rep,base}}$; the criticality contribution is controlled in
Section~\ref{sec:conv} by the usual radius-decrease argument.

	\section{Prior-Regularized Model Completion in BUP-TR}\label{sec:bup}

This section turns the model-completion viewpoint of
Section~\ref{sec:intro} into the BUP-TR construction.  We use the scaled
interpolation system \eqref{eq:Ahat-chat=b} and keep the notation from
Section~\ref{sec:prelim}.  The central operation is a constrained quadratic
minimization in coefficient space: a prior coefficient vector is projected,
in the metric specified by a precision matrix, onto the affine set of
interpolating models.  This metric projection explains the word
``projection'' in the title.  The Schur-complement matrix of the projection
also supplies the geometry test used below.

\subsection{The MAP completion problem and projection formula}\label{subsec:bup-unified}
	Let $\widehat c_k\in\R^q$ be the scaled coefficient vector defined in
	\eqref{eq:chat-def}.  For the current interpolation set $Y_k$, the
	constraints are
	\begin{equation}\label{eq:bup-Achat}
		\widehat A_k \widehat c = b_k,
	\end{equation}
	where $\widehat A_k$ and $b_k$ are the scaled design matrix and sampled
	values introduced in \eqref{eq:Ahat-chat=b}.  In the underdetermined case,
	\eqref{eq:bup-Achat} defines an affine set of feasible coefficient vectors.
	BUP-TR chooses one of them by measuring distance to a prior model.  Given a
	prior mean $\widehat c_k^\pi\in\R^q$ and an SPD precision matrix
	$\widehat W_k\succ0$, we set
	\begin{equation}\label{eq:bup-completion}
		\widehat c_k
		:=\arg\min_{\widehat c\in\R^q}\ \frac12\|\widehat c-\widehat c_k^\pi\|_{\widehat W_k}^2
		\quad\text{s.t.}\quad \widehat A_k\widehat c=b_k,
		\qquad
		\|v\|_{\widehat W}^2:=v^\top \widehat W v.
	\end{equation}
	Thus $\widehat c_k$ is the $\widehat W_k$-orthogonal projection of
	$\widehat c_k^\pi$ onto the affine set \eqref{eq:bup-Achat}.  Classical
	completion rules are recovered by choosing the reference vector and metric
	to reproduce the corresponding minimum-norm or minimum-change criterion.

	The Bayesian interpretation is immediate.  If we place a Gaussian prior on
	the scaled coefficients,
	\begin{equation}\label{eq:bup-prior}
		\widehat c\sim\mathcal{N}(\widehat c_k^\pi,\widehat\Sigma_k),
		\qquad
		\widehat\Sigma_k\succ 0,
		\qquad
		\widehat W_k:=\widehat\Sigma_k^{-1}\succ 0,
	\end{equation}
	and enforce noiseless interpolation $\widehat A_k\widehat c=b_k$, then
	\eqref{eq:bup-completion} is exactly the MAP estimator.
	Figure~\ref{fig:map-projection} illustrates the geometry.

\begin{figure}[t]
\centering
\begin{tikzpicture}[>=Stealth, scale=1.1]
  \coordinate (origin) at (0,0);

  \draw[thick, blue!70!black] (-2.8,-0.9) -- (3.2,1.1)
    node[right,font=\small]{$\mathcal{S}_k=\{\widehat c:\widehat A_k\widehat c=b_k\}$};

  \coordinate (cpi) at (0.4,2.6);
  \fill[red!80!black] (cpi) circle (2pt) node[above right,font=\small]{$\widehat c_k^\pi$ (prior mean)};

  \coordinate (ck) at (1.04,0.38);
  \fill[black] (ck) circle (2pt) node[below right,font=\small,yshift=-1pt]{$\widehat c_k$ (MAP)};

  \draw[->,thick,red!70!black,dashed] (cpi) -- (ck)
    node[midway,right,font=\footnotesize,xshift=2pt]{$\widehat W_k$-projection};

  \begin{scope}[shift={(cpi)},rotate=-12]
    \draw[red!50!black,thin] (0,0) ellipse (1.6cm and 0.7cm);
    \draw[red!50!black,thin,opacity=0.5] (0,0) ellipse (2.8cm and 1.22cm);
  \end{scope}

  \coordinate (cmin) at (-0.85,-0.42);
  \fill[gray] (cmin) circle (2pt) node[below left,font=\small]{$\widehat c_k^{\min\text{-}H_F}$};

  \begin{scope}[shift={(cpi)}]
    \draw[gray,thin,dotted] (0,0) circle (2.31cm);
  \end{scope}
  \draw[->,thin,gray,dotted] (cpi) -- (cmin);

  \draw[decorate,decoration={brace,amplitude=4pt,mirror},thick,blue!60!black]
    (3.2,1.1) -- (3.2,-0.5) node[midway,right,font=\footnotesize,xshift=3pt]{dim $= q-(m{+}1)$};
\end{tikzpicture}
\caption{MAP projection in the scaled quadratic coefficient representation
$\mathbb{R}^q$.  The affine set $\mathcal{S}_k$ contains all
interpolants consistent with data.  The prior mean $\widehat c_k^\pi$
(which may come from a Bayesian surrogate or a recursive accepted-model
prior) defines the projection center, and the precision matrix
$\widehat W_k$ defines the metric (solid ellipses).
The MAP solution $\widehat c_k$ is the closest point on $\mathcal{S}_k$
in the $\widehat W_k$-norm, whereas the classical minimum-curvature
solution $\widehat c_k^{\min\text{-}H_F}$ corresponds to the
Euclidean-closest point (dotted circle) with zero Hessian prior.}
\label{fig:map-projection}
\end{figure}

	To see how classical minimum-curvature completion fits this template,
	partition $\widehat c=[\widehat c^{(0)};\widehat c^{(g)};
	\widehat c^{(H)}]$ as in \eqref{eq:chat-def}
	and consider the precision family
	\begin{equation}\label{eq:bup-W-family}
		\widehat W_k(\tau):=\mathrm{diag}\!\big(\tau I_{1+n},\ I_{q_H}\big),
		\qquad \tau>0.
	\end{equation}
	In the regime $\tau\downarrow 0$ with $\widehat c_k^{\pi,(H)}=0$,
	problem \eqref{eq:bup-completion} increasingly emphasizes minimizing
	$\|\widehat c^{(H)}\|_2$, which corresponds (up to a
	dimension-independent constant) to minimizing $\|H\|_F$ via
	\eqref{eq:bup-mapback}.
	Thus the familiar minimum-Frobenius-norm Hessian completion is
	recovered as a limiting weak-prior case of the prior-regularized
	completion \eqref{eq:bup-prior}--\eqref{eq:bup-completion}.
	
	We now record the block structure of the coefficients and the spectral bounds imposed on the precision matrix.
	
	\phantomsection\label{subsec:bup-prior}
	We partition $\widehat c$ according to constant, gradient, and Hessian components:
	\[
	\widehat c=
	\begin{bmatrix}
		\widehat c^{(0)}\\
		\widehat c^{(g)}\\
		\widehat c^{(H)}
	\end{bmatrix}
	=
	\begin{bmatrix}
		c_{k,0}\\
		\Delta_k c_{k,1}\\
		\Delta_k^2\,\mathrm{vech}(C_{k,2})
	\end{bmatrix},
	\]
	where $\widehat c^{(g)}\in\R^n$ and $\widehat c^{(H)}\in\R^{q_H}$.
	In the analysis (Section~\ref{sec:conv}), we impose explicit spectral bounds on the precision:
	\begin{equation}\label{eq:bup-W-bounds}
		w_{\min}I\ \preceq\ \widehat W_k\ \preceq\ w_{\max}I,
		\qquad 0<w_{\min}\le w_{\max}<\infty,
	\end{equation}
	which can be enforced in practice by eigenvalue clipping (or diagonal clipping in large-scale variants).
	The clipping step is used only to impose the uniform spectral bounds in \eqref{eq:bup-W-bounds}.
	Consequently, the coefficient norm induced by $\widehat W_k$ is uniformly equivalent to the Euclidean norm,
	\[
		w_{\min}\|v\|_2^2\le \|v\|_{\widehat W_k}^2\le w_{\max}\|v\|_2^2,
		\qquad v\in\mathbb{R}^q,
	\]
	with constants independent of the iteration.
	These bounds prevent degeneracy of the completion norm and keep the constants
	in the projection and fully-linear estimates uniform across iterations.
	
	\phantomsection\label{subsec:bup-hard}
	Suppose function values are noiseless and enforce the hard interpolation constraints \eqref{eq:bup-Achat}.
	The hard-MAP estimator is the unique solution of \eqref{eq:bup-completion}:
	\begin{equation}\label{eq:bup-hard-map}
		\widehat c_k
		:=\arg\min_{\widehat c\in\R^q}\ \frac12\|\widehat c-\widehat c_k^\pi\|_{\widehat W_k}^2
		\quad\text{s.t.}\quad \widehat A_k \widehat c=b_k.
	\end{equation}

	Existence and uniqueness follow directly. Since $\widehat W_k\succ 0$, the objective is strictly convex.
	If $\widehat A_k$ has full row rank, then the constraints are feasible for any $b_k\in\R^{m+1}$ and the minimizer is unique.

	The KKT system gives the projection formula. Introduce Lagrange multipliers $\lambda\in\R^{m+1}$ and define
	\[
	\mathcal{L}(\widehat c,\lambda)
	:=\frac12(\widehat c-\widehat c_k^\pi)^\top \widehat W_k(\widehat c-\widehat c_k^\pi)
	+\lambda^\top(\widehat A_k\widehat c-b_k).
	\]
	First-order optimality conditions are
	\begin{align}
		\widehat W_k(\widehat c-\widehat c_k^\pi)+\widehat A_k^\top \lambda&=0, \label{eq:bup-kkt1}\\
		\widehat A_k\widehat c-b_k&=0. \label{eq:bup-kkt2}
	\end{align}
	From \eqref{eq:bup-kkt1},
	\begin{equation}\label{eq:bup-c-lambda}
		\widehat c=\widehat c_k^\pi-\widehat W_k^{-1}\widehat A_k^\top \lambda.
	\end{equation}
	Substituting into \eqref{eq:bup-kkt2} yields the $(m+1)\times(m+1)$ system
	\begin{equation}\label{eq:bup-normal-eq}
		\widehat M_k\,\lambda=\widehat A_k\widehat c_k^\pi-b_k,
		\qquad
		\widehat M_k:=\widehat A_k\widehat W_k^{-1}\widehat A_k^\top.
	\end{equation}
	If $\widehat A_k$ has full row rank, then $\widehat M_k\succ 0$ and is invertible, hence
	\begin{equation}\label{eq:bup-hard-closed}
		\widehat c_k
		=
		\widehat c_k^\pi
		+\widehat W_k^{-1}\widehat A_k^\top \widehat M_k^{-1}\bigl(b_k-\widehat A_k\widehat c_k^\pi\bigr).
	\end{equation}

	The dominant cost in \eqref{eq:bup-hard-closed} is solving the SPD
	system \eqref{eq:bup-normal-eq} of size $(m+1)\times(m+1)$,
	e.g., by Cholesky factorization.
	In the regime $m=\mathcal{O}(n)$, this cost is modest compared to full
	quadratic interpolation, which requires $q=\Theta(n^2)$ samples.

\phantomsection\label{subsec:bup-soft}
For noisy or deliberately smoothed modeling one may replace exact
interpolation by a penalized least-squares term.  With an observation
covariance $R_k\succ0$, the corresponding soft-MAP estimator is
\begin{equation}\label{eq:bup-soft-map}
    \widehat c_k
    :=\arg\min_{\widehat c\in\R^q}
    \frac12\|\widehat A_k\widehat c-b_k\|_{R_k^{-1}}^2
    +\frac12\|\widehat c-\widehat c_k^\pi\|_{\widehat W_k}^2 .
\end{equation}
It has the Woodbury form
\begin{equation}\label{eq:bup-soft-closed-2}
    \widehat c_k
    =
    \widehat c_k^\pi
    +\widehat W_k^{-1}\widehat A_k^\top
    \bigl(\widehat A_k\widehat W_k^{-1}\widehat A_k^\top + R_k\bigr)^{-1}
    \bigl(b_k-\widehat A_k\widehat c_k^\pi\bigr).
\end{equation}
The noiseless hard-MAP model is the object analyzed in
Section~\ref{sec:conv}; the soft-MAP formula is recorded here because it is
used in the discussion of noisy variants.  A derivation and the exact-center
variant are given in Appendix~\ref{app:softmap}.

	\subsubsection{Recovering the quadratic model}\label{subsec:bup-backmap}
	The estimators \eqref{eq:bup-hard-closed}--\eqref{eq:bup-soft-closed-2} produce scaled coefficients $\widehat c_k$.
	To form the displacement model \eqref{eq:quad-model}, write
	\[
	\widehat c_k=
	\begin{bmatrix}
		\widehat c_k^{(0)}\\
		\widehat c_k^{(g)}\\
		\widehat c_k^{(H)}
	\end{bmatrix},
	\]
	and map back via \eqref{eq:chat-def}:
	\begin{equation}\label{eq:bup-mapback}
		c_{k,0}:=\widehat c_k^{(0)},
		\qquad
		c_{k,1}:=\Delta_k^{-1}\widehat c_k^{(g)},
		\qquad
		\mathrm{vech}(C_{k,2}):=\Delta_k^{-2}\widehat c_k^{(H)}.
	\end{equation}
Then $m_k(s)$ is defined by \eqref{eq:quad-model}. Consequently,
	\begin{equation}\label{eq:bup-derivs-origin}
		g_k=\nabla m_k(0)=c_{k,1}=\Delta_k^{-1}\widehat c_k^{(g)},
		\qquad
		H_k=\nabla^2 m_k(0)=C_{k,2}.
	\end{equation}
	
\subsection{Constructing the prior model}\label{subsec:bup-prior-sources}
The completion formula requires a prior mean $\widehat c_k^\pi$ and a
precision matrix $\widehat W_k$.  We describe two sources.  The first uses a
local Gaussian-process surrogate to supply derivative information.  The
second, used in BUP-NEWUOA, transports the most recently accepted model and
therefore remains entirely within the trust-region loop.
	
	\phantomsection\label{subsec:bup-Wchoice}
	Across these constructions, we use a common block-diagonal
	parameterization of the precision matrix:
	\begin{equation}\label{eq:bup-W-block}
		\widehat W_k=\mathrm{diag}\bigl(w_0,\ W_{g,k},\ W_{H,k}\bigr),
	\end{equation}
	where $w_0\in[w_{\min},w_{\max}]$ is the precision for the constant term,
	$W_{g,k}\in\R^{n\times n}$ and $W_{H,k}\in\R^{q_H\times q_H}$ are diagonal
	matrices with entries clipped to $[w_{\min},w_{\max}]$.
	This parameterization makes explicit how the prior acts on the three
	coefficient blocks and, through the Hessian block in
	\eqref{eq:bup-mapback}, how curvature is regularized.
	In both constructions, clipping the diagonal precision entries to
	$[w_{\min},w_{\max}]$ guarantees
	$w_{\min}I \preceq \widehat W_k \preceq w_{\max}I$ by construction.
	
	\subsubsection{Gaussian-process prior}\label{subsec:bup-prior-mean}

	The first prior construction derives prior information from a local
	Gaussian-process (GP) surrogate~\cite{rasmussen2006gp} fitted to past
	function values, without derivative-oracle calls.
	For the prior mean, at the current center $x_k$, derivative moments
	$\nabla\mu_k(x_k)$ and $\nabla^2\mu_k(x_k)$ of the GP posterior mean are
	rescaled to the scaled coefficient basis, yielding
	\begin{equation}\label{eq:bup-prior-gp}
	\widehat c_k^\pi
	:=
	\begin{bmatrix}
	f(x_k)\\
	\Delta_k\,\nabla\mu_k(x_k)\\
	\Delta_k^2\,\mathrm{vech}(\nabla^2\mu_k(x_k))
	\end{bmatrix}
	\in\R^q.
	\end{equation}
	The constant block uses the exact center value so that $m_k(0)=f(x_k)$.

	To define the GP-derived precision, let $\mathcal{D}_k:=\{(x^{(j)},f(x^{(j)}))\}_{j=1}^{N_k}$ denote the
	evaluated data available at iteration $k$ (in practice, the local pool
	used by the GP fit).
	Let $\Sigma_{z,k}=\mathrm{Cov}([\nabla \tilde f(x_k);\,
	\mathrm{vech}(\nabla^2\tilde f(x_k))]\mid\mathcal{D}_k)$ denote
	the posterior covariance of the stacked derivative vector under the
	GP fitted to the $N_{\mathrm{pool}}$ nearest evaluations.
	The scaling $D_k:=\mathrm{diag}(\Delta_k I_n,\Delta_k^2 I_{q_H})$ yields
	a scaled covariance $\Sigma_{\widehat z,k}=D_k\Sigma_{z,k}D_k$
	with gradient-block diagonal $v_g$ and Hessian-block diagonal $v_H$.
	Within the block form \eqref{eq:bup-W-block}, we set
	\[
		W_{g,k}:=\mathrm{diag}\!\bigl(\clip_{[w_{\min},w_{\max}]}(v_g^{-1})\bigr),
		\qquad
		W_{H,k}:=\mathrm{diag}\!\bigl(\clip_{[w_{\min},w_{\max}]}(v_H^{-1})\bigr),
	\]
	with the constant-block precision $w_0\in[w_{\min},w_{\max}]$.
	High posterior uncertainty thus translates to low precision, so poorly predicted coefficient blocks have little influence on the MAP completion.
	For the convergence analysis, this construction is the canonical Bayesian one in our framework: the
	prior mean is taken from GP posterior derivatives, and the precision is
	formed from the corresponding diagonal posterior variances.
	A bridge from GP accuracy to
	Assumption~\ref{asmp:conv-prior} is given in
	Appendix~\ref{app:gp-to-A6}.
	The per-iteration cost is
	$\mathcal{O}(N_{\mathrm{pool}}^3 + n^2 N_{\mathrm{pool}}^2)$.
	Full derivation details are collected in
	Appendix~\ref{app:gp-details}.

\begin{remark}[Sensitivity of the clipping bounds]\label{rem:clip-sensitivity}
The bounds $w_{\min}$ and $w_{\max}$ enter the convergence theory
through the coefficient-error constant
$\kappa_e$ (see~\eqref{eq:conv-ke}): $\kappa_e$ scales with
$1/\sqrt{w_{\min}}$, so a very small $w_{\min}$ inflates the
fully-linear constants.
The scaling in \eqref{eq:chat-def}
makes the diagonal entries comparable across different
$\Delta_k$, a single pair $(w_{\min},w_{\max})$ is typically
effective across the entire run.
Sensitivity of the final performance to
$w_{\max}/w_{\min}$ is examined in the ablation study
(Section~\ref{subsec:exp-results}).
\end{remark}

	\subsubsection{Accepted-model prior}\label{subsec:bup-accepted-map}

	This construction stays entirely within the trust-region loop. It reuses
	the most recently accepted MAP model as prior information for the next
	completion step. We call this the \emph{accepted-model prior}. Since the
	transported model is itself obtained from a previous MAP completion, this
	construction may also be viewed as recursive accepted-MAP completion. It is
	much cheaper than the GP construction and fits the
	$(\widehat c_k^\pi,\widehat W_k)$ interface
	(Remark~\ref{rem:surrogate-agnostic}).

	For the accepted-model prior mean, let $k'$ denote the most recent iteration at which a MAP completion
	was accepted, meaning that the completed model passed the
	acceptability checks and was used in the trust-region subproblem.
	The accepted coefficient vector $\widehat c_{k'}$ is centered at
	$x_{k'}$ and scaled by $\Delta_{k'}$, so it must be transported
	before it can be used at iteration~$k$.
	Define the affine transport
	\begin{equation}\label{eq:bup-transport-map}
		\mathcal T_{k'\to k}:\mathbb{R}^q\to\mathbb{R}^q
	\end{equation}
	as follows: recover the unscaled quadratic model from the input
	coefficient vector using \eqref{eq:bup-mapback} at scale
	$\Delta_{k'}$; translate that quadratic model from center $x_{k'}$ to
	center $x_k$; rescale the gradient and Hessian blocks by
	$\Delta_k$; and set the constant block to the current center value
	$f(x_k)$.
	For an accepted model with gradient $g_{k'}$ and Hessian $H_{k'}$, this
	transport gives
	\begin{equation}\label{eq:bup-accepted-map-shift}
		g_k^\pi = g_{k'} + H_{k'}(x_k - x_{k'}),
		\qquad
		H_k^\pi = H_{k'},
	\end{equation}
	followed by
	\[
		\widehat c_k^{\pi,(g)} = \Delta_k\,g_k^\pi,
		\qquad
		\widehat c_k^{\pi,(H)} = \Delta_k^2\,\mathrm{vech}(H_k^\pi),
		\qquad
		\widehat c_k^{\pi,(0)} = f(x_k).
	\]
	The accepted-model prior is therefore
	\begin{equation}\label{eq:bup-accepted-map-prior}
		\widehat c_k^\pi := \mathcal T_{k'\to k}(\widehat c_{k'}).
	\end{equation}
	We denote by $\mathcal L_{k'\to k}$ the linear difference operator
	associated with this affine transport, obtained by applying the corresponding
	unscaling, shift, and rescaling steps to coefficient
	differences with zero constant reset.
	At the first invocation, when no accepted model is yet available, the
	implementation may use a zero prior, giving a minimum-norm
	completion in the precision metric. With the weak-prior precision family
	\eqref{eq:bup-W-family}, this reduces to the minimum-Frobenius Hessian
	completion. Subsequent theoretical statements use the transported
	accepted-model prior defined above.

	For this recursive construction, the precision is specified directly in
	the block form \eqref{eq:bup-W-block}.
	The gradient block is taken to be uniform,
	$W_{g,k}=w_{\mathrm{base}}I$, while the Hessian block is chosen to be
	diagonal and distance-weighted.
	If the $\ell$-th component of $\mathrm{vech}(H)$ corresponds to the
	upper-triangular entry $(i_u(\ell),j_u(\ell))$, we set
	\begin{equation}\label{eq:bup-accepted-map-W}
		(W_{H,k})_{\ell\ell}
		= \clip_{[w_{\min},w_{\max}]}\!\Big(
		  w_{\mathrm{base}} \cdot \exp\!\big(-\alpha_d\,|i_u(\ell) - j_u(\ell)|\big)
		\Big),
		\qquad \alpha_d > 0,
	\end{equation}
	where $w_{\mathrm{base}}$ is a baseline precision and $\alpha_d$ is a
	decay-rate parameter.
	This design assigns weaker prior pull to Hessian entries coupling more
	distant variable pairs, reflecting lower confidence in long-range
	curvature transfer.
	Since $W_{H,k}$ is diagonal and each entry is clipped to
	$[w_{\min}, w_{\max}]$, the spectral bounds
	\eqref{eq:bup-W-bounds} hold automatically.
	In the default BUP-NEWUOA implementation, local weighted least-squares
	(WLS) curvature statistics are used to calibrate the scale parameter $h_s$
	entering $w_{\mathrm{base}}$; the accepted-model prior mean, the
	center-shift rule, and the clipped block-structured precision remain
	the abstract objects used in the theory.
	In all experiments, we use $\alpha_d = 1.5$ and
	$w_{\mathrm{base}} = h_s \cdot 10^{-6}$, where $h_s$ is a Hessian
	prior scale parameter.  The implementation name
	$\lambda_{\mathrm{decay}}$ corresponds to $\alpha_d$ in
	\eqref{eq:bup-accepted-map-W}.

	In the convergence analysis, this construction is treated as a recursive prior mechanism inside the trust-region loop.
	Under the step-size and model-accuracy conditions stated in
	Lemma~\ref{lem:accepted-map-prior-acc}, the resulting prior satisfies the
	required prior-accuracy condition.

	Storing and updating the accepted-model prior requires $\mathcal{O}(q)$
	memory and $\mathcal{O}(n^2)$ arithmetic for the center-shift
	adjustment \eqref{eq:bup-accepted-map-shift}, compared to
	$\mathcal{O}(n^2 N_{\mathrm{pool}}^2)$ for the GP derivative posterior.
	For $n = 20$ and $N_{\mathrm{pool}} = 100$, this reduces the per-iteration
	overhead by roughly four orders of magnitude.

The two constructions above share the same estimator: once
$(\widehat c_k^\pi,\widehat W_k)$ is available, the model is obtained from
\eqref{eq:bup-hard-closed} and mapped back by \eqref{eq:bup-mapback}.  Thus
the convergence analysis depends on the prior source only through the
spectral bounds on $\widehat W_k$ and the prior-accuracy condition in
Assumption~\ref{asmp:conv-prior}.

\begin{remark}[Generic prior abstraction]\label{rem:surrogate-agnostic}
The default implementation uses the accepted-model prior.  A GP surrogate
is an alternative when the fitting cost is acceptable, and other local
surrogates may be used if they provide a coefficient vector and a positive
definite precision satisfying the assumptions above.
\end{remark}

	\subsection{Geometry of the completion problem}\label{sec:repair}\label{subsec:framework-geometry}
We now formalize the geometry condition previewed in Section~\ref{subsec:geometry-bottleneck}.
The completion rule determines the geometry test: we certify that the
current interpolation set is stable for \eqref{eq:bup-completion} and repair
it only when the certificate fails.  Geometry checks use point locations and
$\widehat W_k$; a function value is queried only after a repair point has
been selected.
	
	\subsubsection{The geometry matrix}\label{subsec:repair-map-geom}

	Let
	\[
	Y=\{y^{(0)},y^{(1)},\ldots,y^{(m)}\}
	\subset B(x_k,\Dgeo{k}),\qquad y^{(0)}=x_k.
	\]
	For this set, define the scaled displacements
	\[
	u^{(i)} := \frac{y^{(i)}-x_k}{\Delta_k},\qquad
	\|u^{(i)}\|_2\le \ctrim,\quad i=0,1,\ldots,m,
	\]
	and the corresponding scaled design matrix
	\begin{equation}\label{eq:repair-AhatY}
		\widehat A(Y)\in\mathbb{R}^{(m+1)\times q},\qquad
		(\widehat A(Y))_{i,:}:=\widehat\phi(u^{(i)})^\top,
	\end{equation}
	where $\widehat\phi(\cdot)$ is the scaled quadratic feature map in \eqref{eq:phihat} and
	$q=\frac{(n+1)(n+2)}{2}$.

Let $\widehat W_k\succ 0$ be the precision used at iteration $k$.  The
matrix used in the geometry test is
	\begin{equation}\label{eq:repair-Mhat}
		\widehat M_k(Y)
		\ :=\
		\widehat A(Y)\,\widehat W_k^{-1}\,\widehat A(Y)^\top
		\ \in\ \mathbb{R}^{(m+1)\times(m+1)}.
	\end{equation}
	We refer to \eqref{eq:repair-Mhat} as the \emph{MAP geometry matrix}.
	In the hard-MAP update \eqref{eq:bup-hard-closed}, $\widehat M_k(Y_k)$ is the matrix inverted in the Schur complement
	system \eqref{eq:bup-normal-eq}. In the soft-MAP update \eqref{eq:bup-soft-closed-2}, the solve involves
	$\widehat M_k(Y_k)+R_k$. The geometry certificate remains based on $\widehat M_k(Y_k)$, which controls the
	conditioning of the MAP completion operator and the constants in the fully-linear analysis in
	Section~\ref{sec:conv}.

We impose a lower bound on the smallest eigenvalue of this matrix.

	\begin{definition}[MAP-poisedness]\label{def:map-poised}
		Fix a threshold $\mu_M>0$. A set $Y\subset B(x_k,\Dgeo{k})$ is called \emph{$\mu_M$-MAP-poised at iteration $k$} if
		\begin{equation}\label{eq:repair-map-poised}
			\lambda_{\min}\big(\widehat M_k(Y)\big)\ \ge\ \mu_M.
		\end{equation}
	\end{definition}
	
	\begin{remark}[Geometry radius]\label{rem:ctrim}
		The interpolation set extends to a geometry radius
		$\Dgeo{k}=\ctrim\Delta_k$ with $\ctrim\ge 1$ fixed (Section~\ref{subsec:prelim-tr}).
		This is the standard setup in model-based trust-region methods for DFO~\cite[Ch.~10]{conn2009introduction}.
		In the fully-linear analysis (Lemma~\ref{lem:conv-FL-BUP}), $\ctrim>1$ increases the
		Taylor-remainder constants by at most $\ctrim^2$, without affecting the convergence
		order or the structure of the proofs.
		In practice we set $\ctrim=1.5$; the fallback set
		(Section~\ref{subsec:repair-fallback}) uses directions of length~$\Delta_k\le\Dgeo{k}$
		and thus always lies within $B(x_k,\Dgeo{k})$.
	\end{remark}

The next lemma records the elementary rank relation behind this condition.

	\begin{lemma}[MAP-poisedness and nonsingularity]\label{lem:repair-rank-spd}
		Let $\widehat W_k\succ 0$ and define $\widehat M_k(Y)$ by \eqref{eq:repair-Mhat}. Set
		\[
		B_k(Y):=\widehat A(Y)\,\widehat W_k^{-1/2}\in\mathbb{R}^{(m+1)\times q},
		\qquad\text{so that}\qquad
		\widehat M_k(Y)=B_k(Y)B_k(Y)^\top.
		\]
		Then $\widehat M_k(Y)\succ 0$ if and only if $\widehat A(Y)$ has full row rank, and
		\[
		\lambda_{\min}\big(\widehat M_k(Y)\big)=\sigma_{\min}\big(B_k(Y)\big)^2,
		\]
		so a uniform lower bound on $\lambda_{\min}(\widehat M_k(Y))$ is equivalent to a uniform lower bound on
		$\sigma_{\min}(B_k(Y))$, i.e., a scale-invariant conditioning bound for the MAP completion.
	\end{lemma}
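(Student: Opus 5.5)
The plan is to reduce everything to standard facts about Gram matrices, using the symmetric positive definite square root of $\widehat W_k$. Since $\widehat W_k\succ 0$, it has a unique SPD square root $\widehat W_k^{1/2}$, whose inverse $\widehat W_k^{-1/2}$ is also SPD and satisfies $\widehat W_k^{-1}=\widehat W_k^{-1/2}\widehat W_k^{-1/2}$. Substituting this into \eqref{eq:repair-Mhat} gives
\[
\widehat M_k(Y)=\widehat A(Y)\widehat W_k^{-1/2}\bigl(\widehat A(Y)\widehat W_k^{-1/2}\bigr)^\top=B_k(Y)B_k(Y)^\top.
\]
This is the claimed factorization, and it shows at once that $\widehat M_k(Y)$ is symmetric positive semidefinite.

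For the equivalence, I will use the quadratic form. For any $v\in\R^{m+1}$ we have $v^\top\widehat M_k(Y)v=\|B_k(Y)^\top v\|_2^2$, so $\widehat M_k(Y)\succ0$ exactly when $B_k(Y)^\top$ has trivial kernel. That condition is the same as $B_k(Y)$ having full row rank. Because $\widehat W_k^{-1/2}$ is invertible, $\widehat A(Y)$ and $B_k(Y)$ have the same row rank. Equivalently, $\ker B_k(Y)^\top=\ker\widehat A(Y)^\top$, since $B_k(Y)^\top v=\widehat W_k^{-1/2}\widehat A(Y)^\top v$. This proves that $\widehat M_k(Y)\succ0$ if and only if $\widehat A(Y)$ has full row rank.

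For the eigenvalue identity, I will take a singular value decomposition $B_k(Y)=U\Sigma V^\top$ with $U\in\R^{(m+1)\times(m+1)}$ orthogonal. Then $B_kB_k^\top=U(\Sigma\Sigma^\top)U^\top$, so the eigenvalues of $\widehat M_k(Y)$ are the squares of the $m+1$ singular values of $B_k(Y)$. These are counted with zeros when the rank is deficient, and $m+1\le q$ in the underdetermined regime of interest. Hence $\lambda_{\min}(\widehat M_k(Y))=\sigma_{\min}(B_k(Y))^2$, where $\sigma_{\min}$ is the $(m+1)$-th singular value. The identity can also be checked directly from the Rayleigh quotient: $\lambda_{\min}=\min_{\|v\|=1}\|B_k^\top v\|^2$.

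The final clause then follows because $t\mapsto t^2$ is monotone on $[0,\infty)$: $\lambda_{\min}\ge\mu_M$ holds if and only if $\sigma_{\min}(B_k(Y))\ge\sqrt{\mu_M}$. Scale invariance holds because both quantities are built from the scaled design matrix $\widehat A(Y)$, whose entries depend only on the scaled displacements $u^{(i)}$ and not on $\Delta_k$. There is no real obstacle in this argument. The only point needing care is the convention that $\sigma_{\min}$ means the $(m+1)$-th singular value, which is the smallest over the row dimension, rather than the smallest nonzero singular value. If $m+1>q$, the matrix $\widehat M_k(Y)$ is necessarily singular, which agrees with the rank statement, so I will note this case explicitly.
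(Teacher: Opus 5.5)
Your proposal is correct and follows essentially the same route as the paper: the rank equivalence comes from the invertibility of $\widehat W_k^{-1/2}$, positive definiteness comes from the Gram factorization $\widehat M_k(Y)=B_k(Y)B_k(Y)^\top$, and the eigenvalue identity comes from the singular values of $B_k(Y)$. Your clarification that $\sigma_{\min}$ means the $(m+1)$-th singular value, zero-padded when needed, is a useful addition the paper leaves implicit, as is your note on the case $m+1>q$.
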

	
	\begin{proof}
		Since $\widehat W_k^{-1/2}$ is invertible, $\widehat A(Y)$ has full row rank if and only if $B_k(Y)$ has full row rank.
		The identity $\widehat M_k(Y)=B_k(Y)B_k(Y)^\top$ implies $\widehat M_k(Y)\succ 0$ if and only if $B_k(Y)$ has full row rank.
		Finally, the eigenvalues of $B_k(Y)B_k(Y)^\top$ are the squared singular values of $B_k(Y)$, which gives the stated
		identity for $\lambda_{\min}$.
	\end{proof}

For diagnostics one may also monitor the condition number
\begin{equation}\label{eq:repair-cond}
	\kappa_{\mathrm{geo}}(Y):=\frac{\lambda_{\max}(\widehat M_k(Y))}{\lambda_{\min}(\widehat M_k(Y))},
\end{equation}
as a diagnostic indicator.
Since $\lambda_{\max}$ is controlled by the norm of the feature
vectors, $\kappa_{\mathrm{geo}}$ is primarily driven by
$\lambda_{\min}$ approaching zero; it is useful for logging
and for setting adaptive thresholds in practice.
The convergence theory requires only the lower-spectral
condition \eqref{eq:repair-map-poised};
$\kappa_{\mathrm{geo}}$ is used only as a diagnostic quantity.

When $\widehat W_k=I$, we have $\widehat M_k(Y)=\widehat A(Y)\widehat A(Y)^\top$ and
	\eqref{eq:repair-map-poised} reduces to $\sigma_{\min}(\widehat A(Y))^2\ge \mu_M$, i.e., a uniform bound on
	$\|\widehat A(Y)^\dagger\|_2$. This is closely aligned with the classical geometry conditions (e.g., $\Lambda$-poisedness)
	used to guarantee fully-linear model accuracy in model-based trust-region methods for DFO \cite{conn2009introduction}.
	
\begin{proposition}[Spectral link to $\Lambda$-poisedness]\label{prop:lambda-link}
	Let $\widehat W_k$ satisfy the spectral bounds
	\eqref{eq:bup-W-bounds}.  Then
	\begin{equation}\label{eq:lambda-sandwich}
	\frac{1}{w_{\max}}\widehat A(Y)\widehat A(Y)^\top
	\;\preceq\;
	\widehat M_k(Y)
	\;\preceq\;
	\frac{1}{w_{\min}}\widehat A(Y)\widehat A(Y)^\top.
	\end{equation}
	Consequently, $\mu_M$-MAP-poisedness implies
	$\sigma_{\min}(\widehat A(Y))\ge \sqrt{w_{\min}\mu_M}$.
	Conversely, if a classical $\Lambda$-poisedness bound yields
	$\sigma_{\min}(\widehat A(Y)) \ge 1/(\Lambda B_\phi)$ for
	$B_\phi := \sup_{\|u\|\le 1}\|\widehat\phi(u)\|$, then $Y$ is
	$\mu_M$-MAP-poised with
	$\mu_M = 1/(w_{\max}\,\Lambda^2\,B_\phi^2)$.
\end{proposition}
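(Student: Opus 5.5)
The plan is to derive everything from the Loewner sandwich \eqref{eq:lambda-sandwich}, which in turn follows from the spectral bounds \eqref{eq:bup-W-bounds} by congruence. Since $w_{\min}I\preceq\widehat W_k\preceq w_{\max}I$ with all matrices SPD, and inversion is order-reversing on SPD matrices, we have $w_{\max}^{-1}I\preceq\widehat W_k^{-1}\preceq w_{\min}^{-1}I$. If one prefers not to cite operator monotonicity of $t\mapsto -t^{-1}$, this also follows from the eigenvalues of $\widehat W_k^{-1}$ being the reciprocals of those of $\widehat W_k$, because both matrices share an eigenbasis. Congruence preserves the Loewner order, so multiplying on the left by $\widehat A(Y)$ and on the right by $\widehat A(Y)^\top$ gives, for every $z\in\R^{m+1}$ with $v:=\widehat A(Y)^\top z$,
\[
\frac{1}{w_{\max}}\|v\|_2^2\;\le\; z^\top\widehat M_k(Y)z=v^\top\widehat W_k^{-1}v\;\le\;\frac{1}{w_{\min}}\|v\|_2^2 .
\]
Since $\|v\|_2^2=z^\top\widehat A(Y)\widehat A(Y)^\top z$, this is exactly \eqref{eq:lambda-sandwich}.

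For the first consequence, I would apply Courant--Fischer (monotonicity of $\lambda_{\min}$ under $\preceq$) to the right inequality:
\[
\mu_M\le\lambda_{\min}(\widehat M_k(Y))\le w_{\min}^{-1}\lambda_{\min}\bigl(\widehat A(Y)\widehat A(Y)^\top\bigr)=w_{\min}^{-1}\sigma_{\min}(\widehat A(Y))^2 .
\]
Here $\sigma_{\min}$ means the smallest of the $m+1$ singular values of the wide matrix $\widehat A(Y)$, as in Lemma~\ref{lem:repair-rank-spd} with $\widehat W_k=I$. Taking square roots gives $\sigma_{\min}(\widehat A(Y))\ge\sqrt{w_{\min}\mu_M}$.

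For the converse, I would apply the left inequality in the same way:
\[
\lambda_{\min}(\widehat M_k(Y))\ge w_{\max}^{-1}\sigma_{\min}(\widehat A(Y))^2\ge \frac{1}{w_{\max}\Lambda^2B_\phi^2}.
\]
This is precisely $\mu_M$-MAP-poisedness with the stated threshold. The hypothesis linking $\Lambda$-poisedness to a singular-value bound is taken as given in the statement, so no Lagrange-polynomial argument is needed.

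There is no real obstacle here. The only point requiring care is to state the order-reversal of inversion and the congruence step cleanly, and to fix the convention that $\sigma_{\min}$ of the $(m+1)\times q$ matrix refers to its row-space singular values, so that $\sigma_{\min}^2=\lambda_{\min}(\widehat A\widehat A^\top)$.
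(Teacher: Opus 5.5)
Your proposal is correct and follows essentially the same route as the paper: it derives the sandwich from $w_{\max}^{-1}I\preceq\widehat W_k^{-1}\preceq w_{\min}^{-1}I$ by congruence, then applies monotonicity of $\lambda_{\min}$ to the upper bound for the forward implication and to the lower bound for the converse. Your write-up is slightly more explicit than the paper's, since it spells out the order reversal of inversion and the convention $\sigma_{\min}^2=\lambda_{\min}(\widehat A\widehat A^\top)$, but the argument is the same.
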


\begin{proof}
	The sandwich \eqref{eq:lambda-sandwich} follows from the definition
	$\widehat M_k(Y)=\widehat A(Y)\widehat W_k^{-1}\widehat A(Y)^\top$
	and the spectral bounds
	$w_{\min}I\preceq\widehat W_k\preceq w_{\max}I$.
	The forward implication follows from the upper sandwich inequality
	\[
	\widehat M_k(Y)\preceq
	w_{\min}^{-1}\widehat A(Y)\widehat A(Y)^\top .
	\]
	Indeed, if $\lambda_{\min}(\widehat M_k(Y))\ge\mu_M$, then
	\[
	\widehat A(Y)\widehat A(Y)^\top
	\succeq w_{\min}\widehat M_k(Y)
	\succeq w_{\min}\mu_M I,
	\]
	and hence
	$\sigma_{\min}(\widehat A(Y))\ge\sqrt{w_{\min}\mu_M}$.
	For the converse, the left inequality in \eqref{eq:lambda-sandwich} yields
	\[
		\lambda_{\min}(\widehat M_k(Y))
		\;\ge\;
		\frac{\sigma_{\min}^2(\widehat A(Y))}{w_{\max}}
		\;\ge\;
		\frac{1}{w_{\max}\,\Lambda^2\,B_\phi^2},
	\]
	which is the claimed MAP-poisedness threshold.
\end{proof}

\begin{remark}
	\cref{prop:lambda-link} shows that MAP-poisedness is the spectral
	counterpart of the bounded-Lagrange-polynomial conditions used in
	classical $\Lambda$-poisedness theory
	\cite[Definition~6.2]{conn2009introduction}.
	Our convergence analysis uses MAP-poisedness directly
	(through $\|\widehat M^{-1}\|_2\le 1/\mu_M$), so
	the proof proceeds without an explicit $\Lambda$-constant.
\end{remark}
	
	The trigger and evaluation budget for geometry repair are specified next.
	
	\phantomsection\label{subsec:repair-when}
	
	At iteration $k$, before forming the MAP model, we check whether the current interpolation set $Y_k$ is MAP-poised:
	\begin{equation}\label{eq:repair-check}
		\text{if }\ \lambda_{\min}\big(\widehat M_k(Y_k)\big) < \mu_M,\ \text{ then invoke geometry repair.}
	\end{equation}
	We allow at most $T_{\mathrm{try}}\ge 1$ incremental repair attempts. If the MAP-poisedness test still fails within
	this budget, we trigger a reset mechanism (Section~\ref{subsec:repair-fallback}). This explicit budget is
	used in the evaluation accounting in Section~\ref{subsec:prelim-evals} and in the evaluation-complexity analysis in
	Section~\ref{sec:conv}.
	
	The convergence analysis in Section~\ref{sec:conv} is stated for the
\emph{baseline certified policy} in which the check
\eqref{eq:repair-check} is applied before every model construction, so
that every trust-region step is computed from a MAP-poised set. The
implementation also supports an \emph{on-reject} schedule: the full
geometry check is performed after rejected trial steps ($\rho_k < \eta_1$),
whereas accepted steps reuse the updated set before the next certification.
This amortized schedule is an implementation variant designed to reduce
repeated eigenvalue checks; the baseline theory analyzes the certified
policy.
	
	\subsubsection{Incremental geometry repair}\label{subsec:repair-acq}
	
	The incremental repair step uses a ranking function only within a geometry-feasible
	candidate subset. Geometry feasibility is enforced by a replace-one test based on
	\eqref{eq:repair-map-poised}.

	Candidate pool.
	We generate a pool of $N_{\mathrm{cand}}$ candidate points
	\begin{equation}\label{eq:repair-candpool}
		C_k\ \subset\ B(x_k,\Delta_k),
		\qquad |C_k|=N_{\mathrm{cand}},
	\end{equation}
by sampling uniformly in the $n$-ball $B(x_k,\Delta_k)$.
No function evaluations are spent at this stage; the theory requires only that candidates are available and
that the fallback reset (Section~\ref{subsec:repair-fallback}) provides a finite safety mechanism.
Alternative candidate strategies are discussed in Appendix~\ref{app:repair-incr}.

	Replace-one feasibility test.
	We keep the interpolation set size fixed at $m+1$. Let $\mathcal{J}:=\{1,2,\ldots,m\}$ index the droppable
	non-center points ($y_k^{(0)}=x_k$ is never dropped).
	For each candidate $y\in C_k$ and each drop index $j\in\mathcal{J}$, form the replacement set
	\[
	Y_k(y,j):=(Y_k\cup\{y\})\setminus\{y_k^{(j)}\},
	\]
	which has size $m+1$, and compute $\lambda_{\min}(\widehat M_k(Y_k(y,j)))$.
	The best drop index for a given candidate is
	\begin{equation}\label{eq:repair-jstar}
		j^\star(y)\ :=\ \arg\max_{j\in\mathcal{J}} \lambda_{\min}\big(\widehat M_k(Y_k(y,j))\big),
	\end{equation}
	with a fixed tie-breaking rule. We declare $y$ \emph{geometry-feasible} if the best replacement achieves
	the threshold:
	\begin{equation}\label{eq:repair-feasible}
		y\in C_k^{\mathrm{geo}}
		\quad:\Longleftrightarrow\quad
		\lambda_{\min}\big(\widehat M_k(Y_k(y,j^\star(y)))\big)\ \ge\ \mu_M.
	\end{equation}
	For each candidate, the scan over all $m$ possible drop positions identifies the replacement with the
	largest $\lambda_{\min}$ and then checks whether it meets the MAP-poisedness threshold.
	The per-candidate cost of this scan is reduced to $\mathcal{O}(m^2)$ by
	a Cholesky rank-one update (Remark~\ref{rem:repair-cost}).

	Among all geometry-feasible candidates $C_k^{\mathrm{geo}}$,
	we select the point that maximizes the resulting minimum eigenvalue:
	\begin{equation}\label{eq:repair-acqselect}
		y_k^{\mathrm{new}}\ :=\ \arg\max_{y\in C_k^{\mathrm{geo}}}\ \lambda_{\min}\!\bigl(\widehat M_k(Y_k(y,j^\star(y)))\bigr).
	\end{equation}
	This directly optimizes the MAP-poisedness quality of the repaired set,
	aligning the repair selection with the geometry certification goal.
	The safety guarantee rests entirely on \eqref{eq:repair-feasible} and the fallback reset;
	the $\lambda_{\min}$-ranking affects the quality of the repair choice while preserving its validity.
	Since the maximization is restricted to $C_k^{\mathrm{geo}}$, the selected point admits a replacement that preserves
	\eqref{eq:repair-map-poised} by construction.

	Reuse of previously evaluated points and candidate-based repair.
	Before spending evaluation budget, the repair mechanism first scans
	$\mathcal{D}_k$ for previously evaluated nearby points satisfying
	\eqref{eq:repair-feasible}, swapping feasible previously evaluated points at
	zero evaluation cost (details in Appendix~\ref{app:repair-incr}).
	If the MAP-poisedness test still fails, each subsequent incremental repair attempt consumes exactly one new function evaluation:
	\begin{enumerate}[leftmargin=2em]
		\item Evaluate $f(y_k^{\mathrm{new}})$ and add it to $\mathcal{D}_k$.
		\item Update the interpolation set:
		\begin{equation}\label{eq:repair-update}
			Y_k \leftarrow Y_k\big(y_k^{\mathrm{new}},\, j^\star(y_k^{\mathrm{new}})\big).
		\end{equation}
		\item Re-check \eqref{eq:repair-check}; if still violated and the attempt counter is below $T_{\mathrm{try}}$,
		generate a fresh candidate pool, filter, select, and repeat.
	\end{enumerate}
	The per-candidate cost of scanning all $m$ drop positions is
	$\mathcal{O}(m^2)$ via Cholesky rank-one updates
	(Appendix~\ref{app:repair-incr}).
	
	\subsubsection{The fallback set}\label{subsec:repair-fallback}
	
Incremental repair may fail if the attempt budget $T_{\mathrm{try}}$ is exhausted (see Appendix~\ref{app:repair-fallback-details} for the counting convention).
To guarantee finite restoration of \eqref{eq:repair-map-poised}, we include a reset mechanism.

	Throughout the baseline algorithm we use the standard choice
	$m=2n$ from \eqref{eq:algo-mchoice}, so the fallback set has
	size $m+1=2n+1$.

	Before resorting to the coordinate-direction fallback below,
	we attempt to assemble a MAP-poised set purely from nearby previously evaluated
	points in $\mathcal{D}_k$, ranked by a surrogate-variance
	score (Appendix~\ref{app:repair-fallback-details}).
	If the resulting set satisfies $\lambda_{\min}(\widehat M_k) \ge \mu_M$,
	the fallback is complete with zero new evaluations;
	otherwise the coordinate-direction reset below is invoked.

	Define the fallback interpolation set
	\begin{equation}\label{eq:repair-fallback-set}
		Y_k^{\mathrm{fb}}
		:=
		\Big\{x_k;\ x_k\pm \Delta_k e_1,\ \ldots,\ x_k\pm \Delta_k e_n\Big\}.
	\end{equation}
	If some points in \eqref{eq:repair-fallback-set} were evaluated previously, their function values can be reused;
	otherwise they are evaluated upon reset.

	If neither reconstruction from previously evaluated points nor incremental repair restores \eqref{eq:repair-map-poised}, we set
	\begin{equation}\label{eq:repair-reset-rule}
		Y_k \leftarrow Y_k^{\mathrm{fb}},
	\end{equation}
	and evaluate any missing function values for points in $Y_k^{\mathrm{fb}}$, adding all newly evaluated pairs to $\mathcal{D}_k$.

	In Section~\ref{sec:conv} (with details in the appendix), we prove that there exists an explicit constant $\mu_0>0$
	such that, whenever $\widehat W_k$ satisfies \eqref{eq:bup-W-bounds}, the fallback set satisfies
	\begin{equation}\label{eq:repair-mu0}
		\lambda_{\min}\big(\widehat M_k(Y_k^{\mathrm{fb}})\big)\ \ge\ \mu_0.
	\end{equation}
	Consequently, choosing $\mu_M\le \mu_0$ makes the reset step \eqref{eq:repair-reset-rule} successful.

	Each incremental repair attempt evaluates exactly one point.
	After at most $T_{\mathrm{try}}$ such attempts, if the MAP-poisedness test still fails, the fallback reset
	\eqref{eq:repair-reset-rule} evaluates at most $m=2n$ new points (the center value is already available):
	\begin{equation}\label{eq:repair-eval-bound-reset}
		N_k^{\mathrm{rep,repair}}\ \le\ T_{\mathrm{try}} + 2n\qquad \text{per repair invocation.}
	\end{equation}
	Since the warm-start set update incurs no new evaluations, the base cost per iteration (single criticality-loop pass)
	equals the repair cost alone:
	$N_k^{\mathrm{rep,base}}\le T_{\mathrm{try}}+2n$ (Lemma~\ref{lem:conv-repair-finite}).
Each additional criticality shrink adds at most $T_{\mathrm{try}}+2n$ evaluations to $N_k^{\mathrm{rep,crit}}$
(the warm-start set update after the radius shrink is evaluation-free; the subsequent repair pass costs at most $T_{\mathrm{try}}+2n$);
the aggregate $\sum_k N_k^{\mathrm{rep,crit}}$ is bounded in Theorem~\ref{thm:conv-complexity}.
	
	\phantomsection\label{subsec:repair-summary}
	The role of the repair mechanism is limited and explicit.  Before each
	model is formed, the current set is tested by \eqref{eq:repair-check}.  A
	failed test first triggers replacements using previously evaluated nearby
	points, then at most $T_{\mathrm{try}}$ new candidate evaluations, and
	finally the coordinate-direction fallback.  Thus every model used for a
	trust-region step is built from a certified interpolation set, while the
	maximum number of non-trial evaluations in one repair pass is bounded by
	$T_{\mathrm{try}}+2n$.  The ranking rule affects only which feasible repair
	point is chosen; the convergence proof uses only feasibility, the attempt
	budget, and the fallback guarantee.

With the completion and geometry ingredients in place, we now assemble
them into a complete trust-region algorithm.
	
	\phantomsection\label{sec:algo}
	\subsection{The BUP-TR algorithm}\label{subsec:framework-algo}
	
	The completion and geometry mechanisms above now give an
	implementable trust-region algorithm.
	We state the baseline hard-MAP variant for noiseless evaluations; the
	soft-MAP variant uses this trust-region skeleton and replaces the model
	construction step with a ridge-type completion.
	Figure~\ref{fig:algo-flow} gives a high-level overview of one iteration.

\begin{figure}[t]
\centering
\resizebox{0.94\textwidth}{!}{%
\begin{tikzpicture}[
    node distance=0.55cm and 0.6cm,
    box/.style={draw, rounded corners=3pt, minimum height=0.7cm,
                minimum width=2.1cm, align=center, font=\small},
    decision/.style={draw, diamond, aspect=2.2, inner sep=1pt,
                     align=center, font=\small},
    arr/.style={-Stealth, thick},
    every node/.style={font=\small}
  ]
  \node[box,fill=blue!8] (gp)
    {Prior source\\[-1pt]$(\widehat c_k^\pi,\widehat W_k)$};
  \node[box,fill=blue!8, right=of gp] (geom)
    {Geometry\\[-1pt]check};
  \node[decision, right=of geom] (ok)
    {MAP-\\poised?};
  \node[box,fill=orange!12, above=0.45cm of ok] (repair)
    {Repair\\[-1pt](candidate + fallback)};
  \node[box,fill=blue!8, right=of ok] (comp)
    {MAP\\[-1pt]completion};
  \node[box,fill=blue!8, right=of comp] (step)
    {TR step\\[-1pt]$s_k$};
  \node[decision, right=of step] (rho)
    {$\rho_k\!\ge\!\eta_1$?};
  \node[box,fill=green!10, above=0.45cm of rho] (accept)
    {Accept;\\[-1pt]expand $\Delta$};
  \node[box,fill=red!8, below=0.45cm of rho] (reject)
    {Reject;\\[-1pt]shrink $\Delta$};

  \draw[arr] (gp) -- (geom);
  \draw[arr] (geom) -- (ok);
  \draw[arr] (ok) -- node[above,font=\scriptsize]{yes} (comp);
  \draw[arr] (ok) -- node[left,font=\scriptsize]{no} (repair);
  \draw[arr] (repair.west) -| ([xshift=-3pt]geom.north);
  \draw[arr] (comp) -- (step);
  \draw[arr] (step) -- (rho);
  \draw[arr] (rho) -- node[left,font=\scriptsize]{yes} (accept);
  \draw[arr] (rho) -- node[left,font=\scriptsize]{no} (reject);

  \coordinate (loop) at ($(accept.east)+(0.3,0)$);
  \draw[arr] (accept.east) -- (loop) |- ([yshift=0.55cm]gp.north) -| (gp.north);
  \draw[arr] (reject.east) -| (loop);
\end{tikzpicture}
}
\caption{One iteration of BUP-TR.
The prior source provides $(\widehat c_k^\pi,\widehat W_k)$;
geometry is certified via $\widehat M_k(Y)$ and repaired if needed;
MAP completion produces the local quadratic model used by the trust-region
step; under the assumptions of Section~\ref{sec:conv}, these models
satisfy the required fully-linear bounds; a standard TR
accept/reject step updates the iterate and radius.
The criticality loop (Repeat--Until block of Algorithm~\ref{alg:buptr})
is omitted for clarity; if the model gradient is too small, the
radius is reduced and the cycle restarts from the geometry check.}
\label{fig:algo-flow}
\end{figure}
	
	\subsubsection{Design choices}\label{subsec:algo-design}
	
	We fix the interpolation set size to
	\begin{equation}\label{eq:algo-mchoice}
		m:=2n,\qquad |Y_k|=m+1=2n+1,
	\end{equation}
	which is the standard ``linear-plus-symmetric'' regime in model-based trust-region methods for DFO.
	
	The algorithm uses the trust-region parameters $(\eta_1,\eta_2,\gamma_{\mathrm{dec}},\gamma_{\mathrm{inc}})$ as in
	Section~\ref{subsec:prelim-accept}, and maintains MAP-poisedness with a threshold $\mu_M>0$ and an incremental repair budget
	$T_{\mathrm{try}}\ge 1$ as in Subsection~\ref{sec:repair}. The candidate pool size for incremental repair is
	$N_{\mathrm{cand}}$.
	
	Prior information enters through the prior mean $\widehat c_k^\pi$ and the precision $\widehat W_k$, constructed as
	in Sections~\ref{subsec:bup-prior-mean}--\ref{subsec:bup-accepted-map} and clipped to satisfy the uniform bounds
	\eqref{eq:bup-W-bounds}.
	
	\subsubsection{Algorithm statement}\label{subsec:algo-pseudocode}
	
	Algorithm~\ref{alg:buptr} summarizes the baseline hard-MAP BUP-TR method.
	The model-construction step invokes a \textsc{PriorSource} to obtain
	$(\widehat c_k^\pi,\widehat W_k)$ satisfying the spectral bounds
	\eqref{eq:bup-W-bounds}, while geometry is maintained by the explicit
	subroutine \textsc{GeometryRepair}
	(Subsection~\ref{sec:repair}).
	Concrete prior-source examples are recalled in
	Remark~\ref{rem:prior-instances}.
	In implementation, we additionally terminate when the best observed value stagnates over
$W_f$ consecutive iterations with relative variation below a tolerance~$f_{\mathrm{tol}}$;
this practical stopping rule is omitted from the theory.
Theorems~\ref{thm:conv-global}--\ref{thm:conv-complexity}
analyze the generated sequence and bound the first index at which an
iterate satisfies $\|\nabla f(x_k)\|\le\varepsilon$.
	
\begin{algorithm}[t]
	\caption{BUP-TR (hard-MAP version, generic prior source)}\label{alg:buptr}
	\begin{algorithmic}[1]\scriptsize
			\Require Initial center $x_0\in\R^n$, radius $\Delta_0\in(0,\Delta_{\max}]$; set size $m:=2n$.
			\Require TR parameters $(\eta_1,\eta_2,\gamma_{\mathrm{dec}},\gamma_{\mathrm{inc}})$.
			\Require Geometry parameters $(\mu_M,T_{\mathrm{try}},N_{\mathrm{cand}})$.
			\Require Precision bounds $(w_{\min},w_{\max})$; criticality parameter $\kappa_\Delta>0$; termination threshold $\Delta_{\min}\ge 0$.
			\Require A \textsc{PriorSource} that, given $(\mathcal{D}_k, x_k, \Delta_k)$, returns $(\widehat c_k^\pi, \widehat W_k)$ satisfying \eqref{eq:bup-W-bounds}.
			\State Initialize $Y_0\leftarrow \{x_0;\ x_0\pm \Delta_0 e_i\}_{i=1}^n$; evaluate $f$ on $Y_0$; set $\mathcal{D}_0:=\{(y,f(y)):y\in Y_0\}$.
			\For{$k=0,1,2,\ldots$}
			\Statex \quad Model construction and criticality loop
			\Repeat
			\State $(\widehat c_k^\pi,\, \widehat W_k) \leftarrow \textsc{PriorSource}(\mathcal{D}_k, x_k, \Delta_k)$.
			\State Geometry: $Y_k \leftarrow \textsc{GeometryRepair}(Y_k,x_k,\Delta_k,\widehat W_k,\mu_M,T_{\mathrm{try}},N_{\mathrm{cand}})$:
			\Statex \qquad sample $N_{\mathrm{cand}}$ candidates uniformly in $B(x_k,\Delta_k)$;
			\Statex \qquad for up to $T_{\mathrm{try}}$ attempts: filter by replace-one test \eqref{eq:repair-feasible},
			\Statex \qquad\quad select the feasible candidate maximizing \eqref{eq:repair-acqselect}, evaluate, and replace \eqref{eq:repair-update};
			\Statex \qquad if still $\lambda_{\min}(\widehat M_k(Y_k))<\mu_M$: fallback reset $Y_k\leftarrow Y_k^{\mathrm{fb}}$ \eqref{eq:repair-reset-rule},
			\Statex \qquad\quad evaluate any missing points in $Y_k^{\mathrm{fb}}$ and add them to $\mathcal{D}_k$.
			\State Form $\widehat A_k=\widehat A(Y_k)$, $b_k=[f(y_k^{(0)}),\ldots,f(y_k^{(m)})]^\top$.
			\State Hard-MAP: Compute $\widehat c_k$ by \eqref{eq:bup-hard-closed}; map to $(c_{k,0},c_{k,1},C_{k,2})$ via \eqref{eq:bup-mapback}; define $m_k$.
			\If{$\|g_k\|_2\le \kappa_\Delta \Delta_k$ \ \ (with $g_k=\nabla m_k(0)$)}
			\State $\Delta_k\leftarrow\gamma_{\mathrm{dec}}\Delta_k$.
		\State Warm-start $Y_k$: retain points within $B(x_k,\ctrim\Delta_k)$; fill from $\mathcal{D}_k$ sorted by distance; no new evaluations.
			\EndIf
			\Until{$\|g_k\|_2> \kappa_\Delta \Delta_k$ or $\Delta_k\le\Delta_{\min}$ (terminate with approximate stationarity)}
			\Statex \quad Trust-region step
			\State Approximately solve \eqref{eq:tr-subprob} to obtain $s_k$ satisfying \eqref{eq:cauchy-dec}.
			\State $\mathrm{pred}_k\leftarrow m_k(0)-m_k(s_k)$.
			\If{$\mathrm{pred}_k\le 0$} \Comment{no trial evaluation}
			\State $\rho_k\leftarrow -\infty$;\ $x_{k+1}\leftarrow x_k$;\ $\Delta_{k+1}\leftarrow\gamma_{\mathrm{dec}}\Delta_k$.
			\Else
			\State Evaluate $f(x_k+s_k)$; $\mathrm{ared}_k\leftarrow f(x_k)-f(x_k+s_k)$; $\rho_k\leftarrow \mathrm{ared}_k/\mathrm{pred}_k$.
			\State $\mathcal{D}_k \leftarrow \mathcal{D}_k \cup \{(x_k+s_k,\,f(x_k+s_k))\}$.
			\If{$\rho_k\ge \eta_1$}
			\State $x_{k+1}\leftarrow x_k+s_k$ \Comment{successful}
			\Else
			\State $x_{k+1}\leftarrow x_k$ \Comment{unsuccessful}
			\EndIf
			\State Update $\Delta_{k+1}$ by the standard TR rule (Section~\ref{subsec:prelim-accept}).
			\EndIf
			\Statex \quad Interpolation set update (no new evaluations)
			\If{$\rho_k\ge \eta_1$}
			\State Form displacement set $S_k^+=\{y-x_{k+1}: y\in Y_k\}$ relative to the new center $x_{k+1}$.
			\State Set $s_0^+=0$ and replace the farthest noncenter displacement by the previous-center displacement.
			\Statex \qquad $j^\dagger\leftarrow\arg\max_{j\ge1}\|s_j^+\|$,
			\Statex \qquad $S_{k+1}\leftarrow(S_k^+\setminus\{s_{j^\dagger}^+\})\cup\{x_k-x_{k+1}\}$.
			\State Convert back to interpolation points $Y_{k+1}\leftarrow\{x_{k+1}+s: s\in S_{k+1}\}$.
			\Else
			\State Keep the center unchanged and set $Y_{k+1}\leftarrow Y_k$.
			\Statex \qquad If the rejected trial point was evaluated, an implementation may insert it only through a geometry-valid replace-one rule; no previous-center displacement is inserted.
			\EndIf
			\State $\mathcal{D}_{k+1}\leftarrow \mathcal{D}_k$.
			\EndFor
		\end{algorithmic}
	\end{algorithm}

\begin{remark}[Examples of prior sources]\label{rem:prior-instances}
Algorithm~\ref{alg:buptr} is agnostic to the choice of prior source: the
convergence theory (Section~\ref{sec:conv}) depends on
\textsc{PriorSource} only through the spectral bounds on
$\widehat W_k$ (Assumption~\ref{asmp:conv-Wbounds}) and the
prior-accuracy condition (Assumption~\ref{asmp:conv-prior}).
For reference, the two concrete prior sources used in this paper are:
\begin{enumerate}[leftmargin=2em,label=(\alph*),nosep]
\item \emph{GP derivative posterior}
(Section~\ref{subsec:bup-prior-mean}):
$\widehat c_k^\pi$ is set via \eqref{eq:bup-prior-gp}
with derivative blocks from a GP fitted to $\mathcal{D}_k$,
and diagonal $\widehat W_k$ by \eqref{eq:bup-W-block}
(see Appendix~\ref{app:gp-details} for the full GP precision derivation).
\item \emph{Accepted-model prior}
(Section~\ref{subsec:bup-accepted-map}, default in BUP-NEWUOA):
$\widehat c_k^\pi$ reuses the accepted MAP coefficients from
the previous iteration with a center shift, and
$\widehat W_k$ follows the accepted-model template of
Section~\ref{subsec:bup-accepted-map}; in the default implementation,
local WLS curvature statistics are used to calibrate its scale,
together with structured Hessian regularization.
No auxiliary surrogate is needed; per-iteration cost is
$\mathcal{O}(n^2)$.
\end{enumerate}
\end{remark}

\begin{remark}[Interpolation set update and information reuse]\label{rem:warmstart}
The inter-iteration set update (Algorithm~\ref{alg:buptr})
replaces a single point---the farthest from the new center---with
the previous center, analogous to the replacement strategy in
NEWUOA~\cite{powell2006newuoa}.
No new function evaluations are triggered; the intra-criticality
set update similarly reuses only points from~$\mathcal{D}_k$.
All new evaluations are confined to geometry repair (at most
$T_{\mathrm{try}}+2n$ per pass, Lemma~\ref{lem:conv-repair-finite})
and the trial step (at most~$1$).
This incremental replacement preserves good interpolation points
across iterations, avoiding the information loss of a full set rebuild.
Information collected in $\mathcal{D}_k$ is reused through
both the prior source and the set update, while
new evaluations are spent only where geometry certification demands
them.
\end{remark}

\begin{remark}[Practical implementation choices in BUP-NEWUOA]\label{rem:impl-enhancements}
BUP-NEWUOA is the concrete implementation used in the experiments.  Its
main design choices are: (i) the accepted-model prior mean described in
Section~\ref{subsec:bup-accepted-map}; (ii) the structured diagonal
Hessian precision \eqref{eq:bup-accepted-map-W}, with default decay
parameter $\lambda_{\mathrm{decay}}=1.5$; and (iii) local WLS curvature
statistics used to calibrate the precision scale.  The convergence
theory covers the baseline hard-MAP algorithm with clipped precision,
MAP-poisedness repair, and the fallback set.  Additional engineering
heuristics used in the code, including restarts, adaptive geometry-check
skipping, radius safeguards, and recovery from previously evaluated points, are reported in
Appendix~\ref{app:repair-details} and Table~\ref{tab:supp-params-bup};
they are outside the scope of Theorems~\ref{thm:conv-global}--\ref{thm:conv-complexity}.
\end{remark}

	\subsubsection{Soft-MAP variant}\label{subsec:algo-soft}
	
	When evaluations are noisy, one may replace the hard-MAP completion step by the soft-MAP estimator
	\eqref{eq:bup-soft-closed-2} or by a constrained soft-MAP that enforces exact center consistency.
	The trust-region skeleton, MAP-poisedness geometry management, and evaluation accounting remain unchanged; only the model
	construction subroutine differs.
	
\begin{remark}[Convergence of the soft-MAP variant]\label{rem:softmap-conv}
	As $R_k \to 0$, the soft-MAP solution \eqref{eq:bup-soft-closed-2}
	converges to the hard-MAP solution \eqref{eq:bup-hard-closed},
	recovering the full convergence theory of Section~\ref{sec:conv}.
	For fixed $R_k>0$ the residual bound in Lemma~\ref{lem:conv-FL-BUP}
	acquires an additive $O(R_k)$ term, so the fully-linear constants
	depend on the noise level.
	Rigorous treatment then requires a probabilistic
	fully-linear framework \cite{bandeira2014convergence}, yielding
	expected or high-probability complexity bounds; the MAP-poisedness
	mechanism is unaffected since it depends only on point geometry.
\end{remark}
	
	\phantomsection\label{subsec:algo-evals}
	
	The evaluation decomposition follows Section~\ref{subsec:prelim-evals}:
	each main iteration incurs at most one trial evaluation, plus
	$N_k^{\mathrm{rep,base}}+N_k^{\mathrm{rep,crit}}$ non-trial evaluations.
	Deterministic bounds on both components are established next.
	
	\section{Convergence and Evaluation Complexity}\label{sec:conv}
	
	We analyze the baseline hard-MAP BUP-TR method under
	Assumptions~\ref{asmp:conv-smooth}--\ref{asmp:conv-crit}. The results
	cover the noiseless hard-MAP setting with the certified geometry policy
	stated in Section~\ref{subsec:framework-geometry}. Under these
	assumptions, Algorithm~\ref{alg:buptr} converges globally and satisfies a
	worst-case evaluation-complexity bound of order
	$\mathcal{O}(\varepsilon^{-2})$.
	
	The analysis has two parts. First, MAP-poisedness, the precision bounds,
	and the prior-accuracy condition imply that the hard-MAP model is fully
	linear. Second, the standard trust-region decrease argument
	for fully-linear models gives global first-order convergence and the
	$\mathcal{O}(\varepsilon^{-2})$ bound. The constants track the quantities
	$(w_{\min},w_{\max},\mu_M,\bar\kappa_\pi)$ explicitly, and the evaluation
	count includes trial evaluations, incremental repair evaluations, and
	fallback evaluations.
	
	\subsection{Assumptions for the analysis}\label{subsec:conv-assumptions}
	
	At iteration $k$, let $x_k\in\R^n$ be the center, $\Delta_k\in(0,\Delta_{\max}]$ the radius, and
	$Y_k=\{y_k^{(0)}=x_k,y_k^{(1)},\ldots,y_k^{(m)}\}\subset B(x_k,\ctrim\Delta_k)$ the interpolation set.
	Define displacements $s_k^{(i)}:=y_k^{(i)}-x_k$ and scaled displacements $u_k^{(i)}:=s_k^{(i)}/\Delta_k$.
	Let $\widehat A_k:=\widehat A(Y_k)$ be the scaled design matrix \eqref{eq:repair-AhatY}.
	
We first record a simple bound for the scaled feature rows.  For
$\|u\|_2\le 1$, the feature vector $\widehat\phi(u)$ in \eqref{eq:phihat}
satisfies
	\begin{equation}\label{eq:conv-Bphi}
		\|\widehat\phi(u)\|_2^2
		=1+\|u\|_2^2+\|\qvec(u)\|_2^2
		\le 1+1+\frac12\|u\|_2^4
		\le \frac52,
	\end{equation}
	hence
	\begin{equation}\label{eq:conv-Bphi-const}
		B_\phi:=\sup_{\|u\|_2\le 1}\|\widehat\phi(u)\|_2 \le \sqrt{\frac52},
		\qquad
		\|\widehat A(Y)\|_2 \le \sqrt{m+1}\,B_\phi.
	\end{equation}
	Since the interpolation set lives in $B(x_k,\ctrim\Delta_k)$, the scaled
	displacements satisfy $\|u_k^{(i)}\|_2\le\ctrim$. Therefore
	\begin{equation}\label{eq:conv-Bphi-geo}
	\begin{aligned}
		\Bgeo
		&:=\sup_{\|u\|_2\le\ctrim}\|\widehat\phi(u)\|_2
		\le \sqrt{1+\ctrim^2+\tfrac12\ctrim^4},\\
		\|\widehat A(Y)\|_2
		&\le \sqrt{m+1}\,\Bgeo.
	\end{aligned}
	\end{equation}
	
	\begin{assumption}[Smoothness and lower boundedness]\label{asmp:conv-smooth}
		The objective $f$ is continuously differentiable on a neighborhood containing all trust regions visited by the algorithm,
		its gradient is Lipschitz with constant $L_g>0$,
		\[
		\|\nabla f(x)-\nabla f(z)\|_2 \le L_g\|x-z\|_2,
		\]
		and $f$ is bounded below by $f_{\inf}\in\R$.
	\end{assumption}
	
	\begin{assumption}[Trust-region parameters]\label{asmp:conv-trparams}
		The trust-region update uses fixed constants $0<\eta_1\le \eta_2<1$ and $0<\gamma_{\mathrm{dec}}<1<\gamma_{\mathrm{inc}}$
		as in Section~\ref{subsec:prelim-accept}.
	\end{assumption}
	
	\begin{assumption}[Subproblem decrease]\label{asmp:conv-cauchy}
		Let $m_k(s)=c_{k,0}+c_{k,1}^\top s+\frac12 s^\top C_{k,2}s$ be the (hard-MAP) BUP model.
		Let $g_k:=\nabla m_k(0)=c_{k,1}$ and $H_k:=\nabla^2 m_k(0)=C_{k,2}$.
		If $g_k\neq 0$, the computed step $s_k$ satisfies $\|s_k\|_2\le \Delta_k$ and
		\begin{equation}\label{eq:conv-cauchy}
			\pred_k:=m_k(0)-m_k(s_k)
			\ \ge\ \frac12\,\|g_k\|_2\min\!\left\{\Delta_k,\ \frac{\|g_k\|_2}{\|H_k\|_2}\right\},
		\end{equation}
		with the convention $\|g_k\|_2/\|H_k\|_2:=+\infty$ when $\|H_k\|_2=0$.
		If $g_k=0$, the algorithm either returns $s_k=0$ or a boundary step along a detected negative-curvature direction such that
		$\pred_k>0$.
	\end{assumption}
	
	\begin{assumption}[Precision bounds]\label{asmp:conv-Wbounds}
		The precision matrix $\widehat W_k$ satisfies the uniform bounds
		\begin{equation}\label{eq:conv-Wbounds}
			w_{\min}I \preceq \widehat W_k \preceq w_{\max}I,
			\qquad 0<w_{\min}\le w_{\max}<\infty,
			\qquad \forall k.
		\end{equation}
		The diagonal block structure \eqref{eq:bup-W-block} satisfies this by construction when the diagonal precision entries are clipped
		to $[w_{\min},w_{\max}]$.
	\end{assumption}
	
	\begin{assumption}[MAP-poisedness]\label{asmp:conv-mappoised}
		After geometry management, $Y_k$ is $\mu_M$-MAP-poised, i.e.,
		\begin{equation}\label{eq:conv-mappoised}
			\lambda_{\min}\!\left(\widehat M_k(Y_k)\right)\ge \mu_M,\qquad \forall k,
		\end{equation}
		where $\widehat M_k(Y)=\widehat A(Y)\widehat W_k^{-1}\widehat A(Y)^\top$ is defined in \eqref{eq:repair-Mhat}.
	\end{assumption}
	
\begin{assumption}[Prior accuracy at the trust-region scale]\label{asmp:conv-prior}
	Let $\ell_k(s)=f(x_k)+\nabla f(x_k)^\top s$ and define its scaled coefficient vector
	\[
	\widehat c_k^\ell=\begin{bmatrix} f(x_k)\\ \Delta_k\nabla f(x_k)\\ 0\end{bmatrix}.
	\]
	The prior mean $\widehat c_k^\pi$ satisfies
	\begin{equation}\label{eq:conv-prior-acc}
		\|\widehat c_k^\pi-\widehat c_k^\ell\|_{\widehat W_k} \le \bar\kappa_\pi\,\Delta_k^2,\qquad \forall k,
	\end{equation}
	for some constant $\bar\kappa_\pi\ge 0$,
	where $\|v\|_{\widehat W_k}:=\sqrt{v^\top\widehat W_k\,v}$ denotes the
	$\widehat W_k$-norm.
\end{assumption}
	
	\begin{assumption}[Criticality safeguard]\label{asmp:conv-crit}
		There exists $\kappa_\Delta>0$ such that whenever $\|g_k\|_2\le \kappa_\Delta \Delta_k$, the algorithm reduces $\Delta_k$
		and rebuilds a MAP-poised set and BUP model, repeating until $\|g_k\|_2>\kappa_\Delta\Delta_k$ (or a stopping test is met).
	\end{assumption}
	
	\begin{definition}[Fully-linear model]\label{def:conv-fullylinear}
		A model $m_k$ is $(\kappa_f,\kappa_g)$-\emph{fully linear} on $B(x_k,\Delta_k)$ if for all $\|s\|_2\le \Delta_k$,
		\begin{align}
			|f(x_k+s)-m_k(s)| &\le \kappa_f \Delta_k^2,\label{eq:conv-FL1}\\
			\|\nabla f(x_k+s)-\nabla m_k(s)\|_2 &\le \kappa_g \Delta_k.\label{eq:conv-FL2}
		\end{align}
	\end{definition}

\begin{remark}[Scope of Assumption~\ref{asmp:conv-prior}]\label{rem:scope-prior}
	Assumption~\ref{asmp:conv-prior} isolates the accuracy required from the
	prior source.  The geometry mechanism enforces MAP-poisedness, and the
	clipping step enforces the precision bounds; the prior-accuracy condition
	depends on how the prior mean is constructed.  It requires the prior mean
	to approximate the linear Taylor coefficients of $f$ at the trust-region
	scale, measured by the $\widehat W_k$-norm.  Coefficient
	directions assigned larger precision therefore require correspondingly
	more accurate prior information.
	A quantitative sufficient condition for surrogate-based priors is given in
	Appendix~\ref{app:gp-to-A6}.  Other prior families can be used in the
	analysis whenever they satisfy
	Assumption~\ref{asmp:conv-prior}.
\end{remark}

\begin{lemma}[Accuracy of the transported accepted model]\label{lem:accepted-map-prior-acc}
	Let $k'\le k$ be the most recent iteration at which a hard-MAP
	completion was accepted, and define
	$\widehat c_k^\pi:=\mathcal T_{k'\to k}(\widehat c_{k'})$ by
	\eqref{eq:bup-transport-map}--\eqref{eq:bup-accepted-map-prior}.
	Suppose the model at~$k'$ is fully linear with scaled coefficient
	error
	$\|\widehat c_{k'}-\widehat c_{k'}^\ell\|_2
	\le \kappa_e\,\Delta_{k'}^2$, and suppose the displacement and radius
	comparison satisfy
	\begin{equation}\label{eq:accepted-map-prior-cond}
		\|x_k - x_{k'}\| \le C_s\,\Delta_k,
		\qquad
		\Delta_{k'} \le C_\Delta\,\Delta_k.
	\end{equation}
	Then the accepted-model prior satisfies Assumption~\ref{asmp:conv-prior}
	with
	\begin{equation}\label{eq:accepted-map-prior-kpi}
		\bar\kappa_\pi
		\;\le\;
		\sqrt{w_{\max}}\,
		\bigl(C_T\,\kappa_e + C_s\,L_g\bigr),
	\end{equation}
	where $L_g$ is the gradient Lipschitz constant from
	Assumption~\ref{asmp:conv-smooth}, and one admissible choice is
	$C_T=C_\Delta+\sqrt{2}C_s+1$.
\end{lemma}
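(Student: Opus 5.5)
The approach is to estimate the Euclidean error $\|\widehat c_k^\pi-\widehat c_k^\ell\|_2$ block by block, and only at the end pass to the $\widehat W_k$-norm using $\|v\|_{\widehat W_k}\le\sqrt{w_{\max}}\|v\|_2$ from \eqref{eq:conv-Wbounds}. This last step produces the prefactor $\sqrt{w_{\max}}$ in \eqref{eq:accepted-map-prior-kpi}. The constant block contributes nothing, because both $\widehat c_k^{\pi,(0)}$ and $\widehat c_k^{\ell,(0)}$ equal $f(x_k)$.

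The first step unscales the hypothesis at $k'$. By \eqref{eq:bup-mapback} and \eqref{eq:chat-def}, the bound $\|\widehat c_{k'}-\widehat c_{k'}^\ell\|_2\le\kappa_e\Delta_{k'}^2$ gives two estimates. For the gradient, $\Delta_{k'}\|g_{k'}-\nabla f(x_{k'})\|\le\kappa_e\Delta_{k'}^2$, i.e. $\|g_{k'}-\nabla f(x_{k'})\|\le\kappa_e\Delta_{k'}$. For the Hessian, $\Delta_{k'}^2\|\mathrm{vech}(H_{k'})\|_2\le\kappa_e\Delta_{k'}^2$, since the linear target has zero Hessian block. Hence $\|\mathrm{vech}(H_{k'})\|_2\le\kappa_e$. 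Converting to the spectral norm, $\|H_{k'}\|_2\le\|H_{k'}\|_F\le\sqrt2\,\|\mathrm{vech}(H_{k'})\|_2\le\sqrt2\,\kappa_e$; this is where the factor $\sqrt2$ in $C_T$ comes from.

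The second step handles the gradient block using the transport formula \eqref{eq:bup-accepted-map-shift}. I would write
\[
g_k^\pi-\nabla f(x_k)=\bigl(g_{k'}-\nabla f(x_{k'})\bigr)+H_{k'}(x_k-x_{k'})+\bigl(\nabla f(x_{k'})-\nabla f(x_k)\bigr).
\]
The three terms are bounded by $\kappa_e\Delta_{k'}\le\kappa_eC_\Delta\Delta_k$, by $\sqrt2\kappa_eC_s\Delta_k$, and by $L_gC_s\Delta_k$ (Lipschitz gradient together with \eqref{eq:accepted-map-prior-cond}). Multiplying by $\Delta_k$ gives
\[
\|\widehat c_k^{\pi,(g)}-\widehat c_k^{\ell,(g)}\|_2\le\bigl((C_\Delta+\sqrt2C_s)\kappa_e+C_sL_g\bigr)\Delta_k^2 .
\]

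The third step handles the Hessian block. Here $\widehat c_k^{\pi,(H)}=\Delta_k^2\mathrm{vech}(H_{k'})$ and the target is zero, so this block is at most $\kappa_e\Delta_k^2$. Note that this is scaled by $\Delta_k^2$ rather than $\Delta_{k'}^2$, so no factor of $C_\Delta$ appears. This term accounts for the "$+1$" in $C_T$.

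Finally I combine the blocks with $\|(a,b)\|_2\le\|a\|+\|b\|$, obtaining $\|\widehat c_k^\pi-\widehat c_k^\ell\|_2\le(C_T\kappa_e+C_sL_g)\Delta_k^2$ with $C_T=C_\Delta+\sqrt2C_s+1$. Passing to the $\widehat W_k$-norm then gives the claim. The only delicate point is the norm conversion in the first step, namely the vech convention: diagonal entries are stored once and off-diagonal entries once, so $\|H\|_F^2\le2\|\mathrm{vech}H\|^2$. Apart from that, the argument is routine bookkeeping.
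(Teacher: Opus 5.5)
Your proposal is correct and is essentially the paper's proof. The only difference is bookkeeping: the paper splits the error as $\mathcal L_{k'\to k}(\widehat c_{k'}-\widehat c_{k'}^\ell)$ plus the transported linear-Taylor discrepancy rather than block by block. It bounds the same four terms ($C_\Delta\kappa_e$, $\sqrt2\,C_s\kappa_e$, $\kappa_e$ from the Hessian block, and $C_sL_g$), using the same vech-to-spectral conversion and the same final $\sqrt{w_{\max}}$ norm comparison.
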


\begin{proof}
	Write the transported prior error as
	\begin{align*}
	\widehat c_k^\pi-\widehat c_k^\ell
	&=\mathcal T_{k'\to k}(\widehat c_{k'})-\widehat c_k^\ell \\
	&=\mathcal L_{k'\to k}(\widehat c_{k'}-\widehat c_{k'}^\ell)
	+\bigl(\mathcal T_{k'\to k}(\widehat c_{k'}^\ell)-\widehat c_k^\ell\bigr),
	\end{align*}
	where the constant blocks cancel because the affine transport and
	$\widehat c_k^\ell$ both use $f(x_k)$ in the constant block.
	The constant block of $\widehat c_{k'}-\widehat c_{k'}^\ell$ is zero
	because both the accepted hard-MAP model and the linear Taylor model
	interpolate $f(x_{k'})$ at the center.
	For the first term, write the coefficient error at $k'$ as scaled
	gradient and Hessian blocks $(e_g,e_H)$, and define the corresponding
	unscaled differences
	\[
	\delta g := \Delta_{k'}^{-1}e_g,
	\qquad
	\delta H := \Delta_{k'}^{-2}\operatorname{mat}(e_H),
	\]
	where $\operatorname{mat}(e_H)$ is the symmetric matrix whose vech is
	$e_H$.  With $d=x_k-x_{k'}$, the transported gradient block is
	\[
	\Delta_k(\delta g+\delta H d)
	=
	\frac{\Delta_k}{\Delta_{k'}}e_g
	+
	\frac{\Delta_k}{\Delta_{k'}^2}\operatorname{mat}(e_H)d,
	\]
	and the transported Hessian block is
	\[
	\Delta_k^2\operatorname{vech}(\delta H)
	=
	\left(\frac{\Delta_k}{\Delta_{k'}}\right)^2e_H.
	\]
	Using \eqref{eq:accepted-map-prior-cond} and
	$\|\operatorname{mat}(e_H)\|_2\le\|\operatorname{mat}(e_H)\|_F\le\sqrt{2}\|e_H\|_2$
	yields
	\[
	\|\mathcal L_{k'\to k}(\widehat c_{k'}-\widehat c_{k'}^\ell)\|_2
	\le (C_\Delta+\sqrt{2}C_s+1)
	\,\kappa_e\,\Delta_k^2.
	\]
	For the second term, transporting the linear Taylor coefficient vector
	from $x_{k'}$ to $x_k$ changes only the gradient block after the constant
	reset.  By Assumption~\ref{asmp:conv-smooth},
	\[
	\|\nabla f(x_{k'})-\nabla f(x_k)\|_2
	\le L_g\|x_{k'}-x_k\|_2
	\le L_g C_s\Delta_k,
	\]
	and scaling by $\Delta_k$ yields
	\[
	\|\mathcal T_{k'\to k}(\widehat c_{k'}^\ell)-\widehat c_k^\ell\|_2
	\le C_s L_g\Delta_k^2.
	\]
	Since $\widehat W_k\preceq w_{\max}I$, the triangle inequality gives
	\[
	\|\widehat c_k^\pi-\widehat c_k^\ell\|_{\widehat W_k}
	\le \sqrt{w_{\max}}\,(C_T\kappa_e+C_sL_g)\Delta_k^2,
	\]
	which is \eqref{eq:accepted-map-prior-kpi}.

	\emph{Initialization and restarts.}
	The transported-prior argument applies after an accepted model is available.
	Before that point, Assumption~\ref{asmp:conv-prior} must be verified for the
	chosen initialization prior, or imposed as part of the initialization
	hypothesis.  Since this affects only finitely many initial model constructions,
	its cost can be absorbed into the finite initialization overhead in the
	evaluation-complexity bound.
	If implementation-level restarts are used, the same argument applies on each
	accepted-model segment after the restart; these restarts are not part of the
	baseline algorithm analyzed in
	Theorems~\ref{thm:conv-global}--\ref{thm:conv-complexity}.
\end{proof}

\begin{remark}[When the transport condition fails]\label{rem:transport-failure}
	The displacement bound~\eqref{eq:accepted-map-prior-cond} may be violated when
	many consecutive rejected steps cause $\|x_k-x_{k'}\|\gg C_s\Delta_k$, or
	when the radius shrinks substantially between iterations $k'$ and~$k$.
	Lemma~\ref{lem:accepted-map-prior-acc} applies only on iterations for which
	the admissibility conditions in \eqref{eq:accepted-map-prior-cond} hold.
	If an implementation falls back to a zero-Hessian or minimum-norm prior when
	these conditions fail, that fallback is a practical safeguard and is not
	covered by Lemma~\ref{lem:accepted-map-prior-acc} unless it is separately
	constructed or verified to satisfy Assumption~\ref{asmp:conv-prior}.  The
	cascade ablation in Table~\ref{tab:ablation} is therefore interpreted as an
	implementation study rather than as an additional theoretical guarantee.
\end{remark}
	
	\subsection{Fully-linear accuracy of the MAP model}\label{subsec:conv-fl}
	
	\begin{lemma}[Fully-linear error bound for hard-MAP models]\label{lem:conv-FL-BUP}
		Suppose Assumptions~\ref{asmp:conv-smooth}, \ref{asmp:conv-Wbounds}, \ref{asmp:conv-mappoised}, and \ref{asmp:conv-prior} hold.
		Let $\widehat c_k$ be the hard-MAP solution \eqref{eq:bup-hard-closed} on $Y_k$, and let $m_k$ be the corresponding model.
		Define
	\begin{equation}\label{eq:conv-ke}
		\kappa_e
		:=
		\frac{\bar\kappa_\pi}{\sqrt{w_{\min}}}
		+\frac{\ctrim^2\,\Bgeo\,L_g(m+1)}{2w_{\min}\mu_M}.
	\end{equation}
		Then $m_k$ is fully linear on $B(x_k,\Delta_k)$ with constants
		\begin{equation}\label{eq:conv-kappas}
		\kappa_f := \frac{L_g}{2} + B_\phi\,\kappa_e,
		\qquad
		\kappa_g := L_g + (1+\sqrt{2})\,\kappa_e,
		\end{equation}
		and the model Hessian is uniformly bounded:
		\begin{equation}\label{eq:conv-Hmax}
			\|H_k\|_2 \le H_{\max}:=\sqrt{2}\,\kappa_e,\qquad \forall k.
		\end{equation}
	\end{lemma}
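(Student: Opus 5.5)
The plan is to bound the scaled coefficient error $e:=\widehat c_k-\widehat c_k^\ell$ by $\kappa_e\Delta_k^2$ in the Euclidean norm, and then unscale it to obtain the function, gradient and Hessian bounds.

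\emph{Step 1: decompose the coefficient error.} Set $P:=\widehat W_k^{-1}\widehat A_k^\top\widehat M_k^{-1}$. Subtracting $\widehat c_k^\ell$ from both sides of \eqref{eq:bup-hard-closed} gives
\[
e=(I-P\widehat A_k)(\widehat c_k^\pi-\widehat c_k^\ell)+P\,(b_k-\widehat A_k\widehat c_k^\ell).
\]
The operator $I-P\widehat A_k$ is the $\widehat W_k$-orthogonal projector onto $\ker\widehat A_k$, so it has $\widehat W_k$-norm at most one. Combining this with $w_{\min}\|v\|_2^2\le\|v\|_{\widehat W_k}^2$ and Assumption~\ref{asmp:conv-prior}, the first term has Euclidean norm at most $\bar\kappa_\pi\Delta_k^2/\sqrt{w_{\min}}$. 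For the second term, the residual $r:=b_k-\widehat A_k\widehat c_k^\ell$ has entries $f(y_k^{(i)})-\ell_k(s_k^{(i)})$. By Assumption~\ref{asmp:conv-smooth} each entry is bounded by $\tfrac{L_g}{2}\|s_k^{(i)}\|^2\le \tfrac{L_g}{2}\ctrim^2\Delta_k^2$, hence $\|r\|_2\le\sqrt{m+1}\,\tfrac{L_g}{2}\ctrim^2\Delta_k^2$. Then I bound $\|P\|_2\le\|\widehat W_k^{-1}\|_2\|\widehat A_k\|_2\|\widehat M_k^{-1}\|_2\le w_{\min}^{-1}\sqrt{m+1}\,\Bgeo\,\mu_M^{-1}$, using \eqref{eq:conv-Bphi-geo} and Assumption~\ref{asmp:conv-mappoised}. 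The product gives exactly the second summand of \eqref{eq:conv-ke} times $\Delta_k^2$, so $\|e\|_2\le\kappa_e\Delta_k^2$. One could sharpen $\|P\|_2$ to $w_{\min}^{-1/2}\mu_M^{-1/2}$ via Lemma~\ref{lem:repair-rank-spd}, but the cruder bound matches the stated constant, so I will use it.

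\emph{Step 2: unscale.} Write $e=(e_0,e_g,e_H)$. The constant block satisfies $e_0=0$, since interpolation at the center forces $c_{k,0}=f(x_k)$. From \eqref{eq:bup-mapback}, $\|g_k-\nabla f(x_k)\|_2=\Delta_k^{-1}\|e_g\|_2\le\kappa_e\Delta_k$. Also $\|H_k\|_2\le\|H_k\|_F\le\sqrt2\,\Delta_k^{-2}\|e_H\|_2\le\sqrt2\kappa_e$, because off-diagonal entries appear once in $\operatorname{vech}$ but twice in the Frobenius norm; this gives \eqref{eq:conv-Hmax}. For the gradient condition \eqref{eq:conv-FL2}, with $\|s\|\le\Delta_k$,
\[
\|\nabla f(x_k+s)-g_k-H_ks\|_2\le L_g\Delta_k+\kappa_e\Delta_k+\sqrt2\kappa_e\Delta_k,
\]
which yields $\kappa_g$. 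For \eqref{eq:conv-FL1}, write $m_k(s)-\ell_k(s)=\widehat\phi(s/\Delta_k)^\top e$, which is bounded by $B_\phi\kappa_e\Delta_k^2$ by Cauchy--Schwarz and \eqref{eq:conv-Bphi-const}. Adding the Taylor bound $|f(x_k+s)-\ell_k(s)|\le\tfrac{L_g}{2}\Delta_k^2$ gives $\kappa_f$.

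\emph{Main obstacle.} The step needing most care is the projection-contraction claim in Step 1, together with keeping the norms straight when passing between the $\widehat W_k$-norm and the Euclidean norm. I will verify directly that $Q:=I-P\widehat A_k$ satisfies $Q^2=Q$ and that $\widehat W_kQ$ is symmetric. These two facts make $Q$ $\widehat W_k$-self-adjoint and idempotent, so $\|Qv\|_{\widehat W_k}\le\|v\|_{\widehat W_k}$. Everything else is routine bookkeeping of constants.
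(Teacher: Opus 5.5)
Your proposal is correct and follows the paper's proof essentially step for step: the same splitting of $e=\widehat c_k-\widehat c_k^\ell$ into the $\widehat W_k$-projected prior error plus $\widehat W_k^{-1}\widehat A_k^\top\widehat M_k^{-1}r$, the same nonexpansiveness and $w_{\min}^{-1/2}$ norm-conversion argument, the same crude product bound on the operator norm that yields exactly the stated $\kappa_e$, and the same unscaling for the Hessian, gradient, and function errors. Your side remark that the operator norm can be sharpened to $(w_{\min}\mu_M)^{-1/2}$ is also correct, though neither argument needs it.
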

	
	\begin{proof}
		See Appendix~\ref{app:proof-FL}.
	\end{proof}
	
	The lemma above applies to any single MAP-poised iteration.
	It remains to verify that the geometry-repair mechanism restores MAP-poisedness within
	a bounded number of evaluations, making the fully-linear guarantee available at
	every iteration of Algorithm~\ref{alg:buptr}.
	
	\phantomsection\label{subsec:conv-repair}
	
	\begin{lemma}[MAP-poisedness of the fallback set]\label{lem:conv-fallback}
		Suppose Assumption~\ref{asmp:conv-Wbounds} holds and set $m=2n$.
		Let $Y_k^{\mathrm{fb}}=\{x_k;\ x_k\pm \Delta_k e_1,\ldots,x_k\pm \Delta_k e_n\}$ be the fallback set \eqref{eq:repair-fallback-set}.
		Then
		\begin{equation}\label{eq:conv-mu0}
			\lambda_{\min}\!\left(\widehat M_k(Y_k^{\mathrm{fb}})\right)\ \ge\
			\mu_0:=\frac{1}{w_{\max}}\cdot\frac{1}{4n+3}.
		\end{equation}
	\end{lemma}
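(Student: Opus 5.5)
The plan is to reduce the claim to a purely Euclidean spectral bound via \cref{prop:lambda-link}, and then compute that bound explicitly from the Gram matrix of the fallback design. By the left inequality in \eqref{eq:lambda-sandwich}, $\widehat M_k(Y_k^{\mathrm{fb}})\succeq w_{\max}^{-1}\widehat A\widehat A^\top$, where $\widehat A:=\widehat A(Y_k^{\mathrm{fb}})$. It therefore suffices to prove
\[
\lambda_{\min}\bigl(G\bigr)\ge \frac{1}{4n+3},
\qquad G:=\widehat A\widehat A^\top .
\]
This bound no longer involves $\widehat W_k$.

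\emph{Step 1: rows and Gram entries.} The scaled displacements of $Y_k^{\mathrm{fb}}$ are $u=0$ and $u=\pm e_i$. Since $\qvec(\pm e_i)$ has the single nonzero entry $\tfrac12$ in the $i$-th diagonal slot, the rows are
\[
r_0=(1,0,0),\qquad r_i^\pm=\bigl(1,\pm e_i,\tfrac12 e_i^{\rm diag}\bigr).
\]
Their inner products are:
\begin{itemize}
\item $\langle r_0,r_0\rangle=1$ and $\langle r_0,r_i^\pm\rangle=1$;
\item $\|r_i^\pm\|^2=\tfrac94$ and $\langle r_i^+,r_i^-\rangle=\tfrac14$;
\item $\langle r_i^s,r_j^t\rangle=1$ for $i\ne j$.
\end{itemize}
Hence $G=J+D$, where $J$ is the all-ones matrix and $D$ is block diagonal. $D$ has a $0$ in the center position and, for each $i$, the $2\times2$ block $\begin{bmatrix}5/4&-3/4\\-3/4&5/4\end{bmatrix}$ on the pair $(r_i^+,r_i^-)$.

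\emph{Step 2: invariant subspaces.} For each $i$, the difference vector $(e_{i+}-e_{i-})/\sqrt2$ is annihilated by $J$ and is an eigenvector of $D$ with eigenvalue $2$. This accounts for $n$ eigenvalues equal to $2$. The orthogonal complement is spanned by $e_0$ and $s_i:=(e_{i+}+e_{i-})/\sqrt2$. On this complement, $D$ acts as $\mathrm{diag}(0,\tfrac12 I_n)$, and $J$ acts as $vv^\top$ with $v=(1,\sqrt2,\dots,\sqrt2)$, since $\mathbf 1=e_0+\sqrt2\sum_i s_i$. So the remaining spectrum is that of $\mathrm{diag}(0,\tfrac12 I_n)+vv^\top$.

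\emph{Step 3: secular equation (main step).} This rank-one update of a diagonal matrix has $n-1$ eigenvalues equal to $\tfrac12$, coming from vectors in the $s$-block orthogonal to the all-ones direction. The remaining two eigenvalues are the roots of
\[
h(\lambda):=1-\frac1\lambda+\frac{2n}{\tfrac12-\lambda}=0 .
\]
The smallest eigenvalue is the root in $(0,\tfrac12)$, and $h$ increases there from $-\infty$ to $+\infty$. Evaluating at $\lambda=1/(4n+3)$ gives
\[
h\!\left(\tfrac1{4n+3}\right)=-(4n+2)+\frac{4n(4n+3)}{4n+1}=\frac{-2}{4n+1}<0 .
\]
Therefore the root exceeds $1/(4n+3)$.

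Since $\tfrac12$ and $2$ also exceed $1/(4n+3)$, we conclude $\lambda_{\min}(G)\ge 1/(4n+3)$, and \eqref{eq:conv-mu0} follows. The only delicate step is the secular-equation bound; the rest is bookkeeping. One should also note that the off-diagonal Hessian coordinates contribute nothing to $G$, because every feature vector of the fallback set vanishes in those coordinates.
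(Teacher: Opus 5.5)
Your proof is correct and takes essentially the same route as the paper. You reduce to $\lambda_{\min}(\widehat A\widehat A^\top)$ via $\widehat W_k^{-1}\succeq w_{\max}^{-1}I$, compute the same Gram entries, and split off the difference vectors (eigenvalue $2$), the sum-deviation vectors (eigenvalue $\tfrac12$, multiplicity $n-1$), and a two-dimensional block; the paper finds that block's smaller root explicitly from $2\lambda^2-(4n+3)\lambda+1=0$, while you bound it by evaluating the equivalent secular equation at $1/(4n+3)$.
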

	
	\begin{proof}
		See Appendix~\ref{app:fallback}. The bound follows by combining
		$\widehat W_k^{-1}\succeq (1/w_{\max})I$ with an explicit eigenvalue lower bound for
		$\widehat A(Y_k^{\mathrm{fb}})\widehat A(Y_k^{\mathrm{fb}})^\top$.
	\end{proof}
	
	\begin{lemma}[Uniform repair-evaluation bound]\label{lem:conv-repair-finite}
		If the geometry mechanism of Subsection~\ref{sec:repair} is used with attempt budget $T_{\mathrm{try}}$ and threshold
		$\mu_M\le \mu_0$, where $\mu_0$ is defined in \eqref{eq:conv-mu0}, then at every main iteration $k$, the
		base non-trial evaluation count (for a single criticality-loop pass) satisfies
		\begin{equation}\label{eq:conv-repair-evals}
			N_k^{\mathrm{rep,base}} \le T_{\mathrm{try}} + 2n.
		\end{equation}
	Each additional criticality shrink within the same iteration adds at most $T_{\mathrm{try}}+2n$ evaluations to
	$N_k^{\mathrm{rep,crit}}$ (from the subsequent repair pass; the warm-start set update after the radius shrink incurs no new evaluations).
\end{lemma}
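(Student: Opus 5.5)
The bound is a counting argument along the repair routine of Subsection~\ref{sec:repair}, with Lemma~\ref{lem:conv-fallback} guaranteeing that the routine terminates. I would split one invocation of \textsc{GeometryRepair} (one criticality-loop pass) into its three stages and charge evaluations to each.

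\emph{Stage 0 (reuse from $\mathcal{D}_k$).} Both the replace-one swaps with previously evaluated points and the attempt to assemble a MAP-poised set from $\mathcal{D}_k$ before the coordinate reset query only points whose values are already stored. They therefore cost zero evaluations. The geometry tests themselves use only point locations and $\widehat W_k$, so computing $\widehat M_k(Y)$ and its $\lambda_{\min}$ for any number of candidates also costs nothing.

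\emph{Stage 1 (incremental attempts).} Each attempt evaluates exactly one selected point $y_k^{\mathrm{new}}$, and there are at most $T_{\mathrm{try}}$ attempts, so this stage costs at most $T_{\mathrm{try}}$ evaluations. If $C_k^{\mathrm{geo}}$ is empty, the attempt makes no evaluation, which only helps the bound.

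\emph{Stage 2 (fallback).} If $\lambda_{\min}(\widehat M_k(Y_k))<\mu_M$ still holds, the reset \eqref{eq:repair-reset-rule} installs $Y_k^{\mathrm{fb}}$. This set has $2n+1$ points, and $f(x_k)$ is already known because $x_k$ is the center and has been evaluated, either at initialization or as an accepted trial point. So at most $2n$ new evaluations occur.

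The main point is to check that nothing happens after the fallback, which is what makes the count finite. Here I would invoke Lemma~\ref{lem:conv-fallback}: under Assumption~\ref{asmp:conv-Wbounds}, $\lambda_{\min}(\widehat M_k(Y_k^{\mathrm{fb}}))\ge\mu_0\ge\mu_M$. The certificate \eqref{eq:repair-check} is therefore satisfied, and the routine exits without further evaluations. Adding the stages gives $N_k^{\mathrm{rep,base}}\le 0+T_{\mathrm{try}}+2n$, which is \eqref{eq:conv-repair-evals}.

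For the criticality shrinks, the warm-start update after $\Delta_k\leftarrow\gamma_{\mathrm{dec}}\Delta_k$ only retains points or refills from $\mathcal{D}_k$, so it costs no evaluations. The subsequent \textsc{PriorSource} call also queries no new function values. The next pass is another invocation of the same routine at the new radius. The fallback bound $\mu_0$ does not depend on $\Delta_k$, because Lemma~\ref{lem:conv-fallback} works in scaled coordinates. Hence the identical stage-by-stage count applies, and each extra shrink adds at most $T_{\mathrm{try}}+2n$ evaluations to $N_k^{\mathrm{rep,crit}}$.
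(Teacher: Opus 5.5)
Your proposal is correct and follows the paper's proof: you make the same stage-by-stage count, with at most $T_{\mathrm{try}}$ incremental-repair evaluations plus at most $2n$ fallback evaluations (the center value is already known). You also use Lemma~\ref{lem:conv-fallback} to ensure the fallback set is certified, and you repeat the identical count for each extra criticality shrink because the warm-start update is evaluation-free. Your explicit treatment of the zero-cost reuse stages and your observation that $\mu_0$ does not depend on $\Delta_k$ are small additions that the paper leaves implicit.
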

	
	\begin{proof}
		The base non-trial evaluations at a single main iteration $k$ (one pass through the outer \textbf{for} loop of
		Algorithm~\ref{alg:buptr}, without extra criticality shrinks) decompose into two groups:
		\begin{enumerate}[leftmargin=2em,label=(\alph*)]
		\item \emph{Pre-model repair} (incremental attempts): at most $T_{\mathrm{try}}$ (one evaluation per attempt).
		\item \emph{Pre-model fallback reset} (if incremental repair fails): at most $2n$ new evaluations
		(the $2n$ coordinate-direction points in $Y_k^{\mathrm{fb}}$ minus any already cached in $\mathcal{D}_k$;
		at most $2n$ in the worst case).
		\end{enumerate}
		The warm-start set update at the end of the iteration reuses only previously evaluated points from $\mathcal{D}_k$
		and incurs no new evaluations.
		Summing (a)--(b) gives $N_k^{\mathrm{rep,base}}\le T_{\mathrm{try}}+2n$.
		
	Each additional criticality shrink first performs a warm-start set update (no new evaluations)
	and then re-enters the model-building phase (GP fit, geometry check, repair, MAP build),
	where the repair costs at most $T_{\mathrm{try}}$ (incremental) plus $2n$ (fallback).
	Hence each extra shrink adds at most $T_{\mathrm{try}}+2n$ evaluations, giving
	$N_k^{\mathrm{rep,crit}}\le L_k(T_{\mathrm{try}}+2n)$ where $L_k\ge 0$ is the number of extra criticality
	shrinks at iteration~$k$.
		Lemma~\ref{lem:conv-fallback} guarantees that every fallback reset restores MAP-poisedness for $\mu_M\le \mu_0$.
	\end{proof}
	
	\begin{corollary}[Geometry guarantee after repair]\label{cor:conv-mappoised-guaranteed}
		Under the geometry mechanism of Subsection~\ref{sec:repair} with
		$\mu_M \le \mu_0$, Algorithm~\ref{alg:buptr} ensures that
		Assumption~\ref{asmp:conv-mappoised} holds at every iteration.
	\end{corollary}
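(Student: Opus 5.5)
The plan is a case analysis over the ways Algorithm~\ref{alg:buptr} can finish the geometry step. Each time a model is formed, whether in the first pass of the criticality loop or after a criticality shrink, \textsc{GeometryRepair} runs with the current $\widehat W_k$, which satisfies \eqref{eq:bup-W-bounds}. We show that each of its exits returns a set $Y$ with $\lambda_{\min}(\widehat M_k(Y))\ge\mu_M$. Since the hard-MAP model \eqref{eq:bup-hard-closed} and the trust-region step are computed only after this routine returns, every step then comes from a $\mu_M$-MAP-poised set. That is exactly Assumption~\ref{asmp:conv-mappoised}, because under the certified policy the check \eqref{eq:repair-check} precedes every model construction.

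The exits are as follows.
\begin{enumerate}[label=(\roman*)]
\item The initial test \eqref{eq:repair-check} passes. Then $Y_k$ is returned unchanged, and it is $\mu_M$-MAP-poised by the test itself.
\item A zero-cost swap with previously evaluated points from $\mathcal{D}_k$ succeeds, or an incremental attempt \eqref{eq:repair-update} succeeds. The routine stops only once the re-check of \eqref{eq:repair-check} passes, so the returned set is certified. For incremental attempts, the feasibility filter \eqref{eq:repair-feasible} also guarantees directly that the replaced set satisfies \eqref{eq:repair-map-poised}. The budget $T_{\mathrm{try}}$ makes this loop finite.
\item The data-based fallback assembled from $\mathcal{D}_k$ succeeds. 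It is accepted only if $\lambda_{\min}\ge\mu_M$, so it is certified by construction.
\item None of the above succeeds, and the coordinate reset \eqref{eq:repair-reset-rule} is applied. This is the one case that needs an argument. Lemma~\ref{lem:conv-fallback} gives $\lambda_{\min}(\widehat M_k(Y_k^{\mathrm{fb}}))\ge\mu_0\ge\mu_M$. It applies because $m=2n$, $\widehat W_k$ obeys Assumption~\ref{asmp:conv-Wbounds}, and $Y_k^{\mathrm{fb}}\subset B(x_k,\Delta_k)\subset B(x_k,\Dgeo{k})$ since $\ctrim\ge1$ (Remark~\ref{rem:ctrim}). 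The function values the model needs are available, because missing ones are evaluated, and Lemma~\ref{lem:conv-repair-finite} bounds the cost of doing so.
\end{enumerate}

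It then remains to check that nothing between certification and model use breaks the certificate. The warm-start update after a criticality shrink changes $\Delta_k$ and therefore $\widehat A(Y)$. Likewise, the end-of-iteration update changes the center. Neither update is used to build a model directly, however. The criticality loop of Algorithm~\ref{alg:buptr} calls \textsc{PriorSource} and \textsc{GeometryRepair} again at the start of every pass, with the current $(x_k,\Delta_k,\widehat W_k)$. So the set that is certified is exactly the set fed into \eqref{eq:bup-hard-closed}, and it is certified with respect to the same $\widehat W_k$ used in the completion. The loop terminates either when $\|g_k\|>\kappa_\Delta\Delta_k$ or when $\Delta_k\le\Delta_{\min}$, and in both cases the last model was built from a certified set.

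The main, and only nontrivial, obstacle is the consistency point just described. The certificate depends on $\widehat W_k$ and $\Delta_k$, and both may change within an iteration. The argument must therefore stress that the prior is recomputed before the geometry check in each pass, not after it, and that the definition of $\mu_0$ does not depend on $k$. Once this ordering is pinned down, the corollary reduces to Lemma~\ref{lem:conv-fallback} and the inequality $\mu_M\le\mu_0$.
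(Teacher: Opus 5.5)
Your proposal is correct and follows the same route as the paper's proof. Every exit of the repair routine is certified either by the test \eqref{eq:repair-check} itself or, for the coordinate reset, by Lemma~\ref{lem:conv-fallback} together with $\mu_0\ge\mu_M$; your separate treatment of the zero-cost exits (swaps from $\mathcal{D}_k$ and the data-based fallback) and of the ordering of \textsc{PriorSource} before \textsc{GeometryRepair} in each criticality pass makes explicit details the paper's short proof leaves implicit.
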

	\begin{proof}
		At each model-building phase, if the pre-model check
		\eqref{eq:repair-check} detects
		$\lambda_{\min}(\widehat M_k(Y_k)) < \mu_M$,
		incremental repair is attempted (at most $T_{\mathrm{try}}$ times).
		If the MAP-poisedness test still fails, the fallback reset
		\eqref{eq:repair-reset-rule} sets $Y_k \leftarrow Y_k^{\mathrm{fb}}$,
		and Lemma~\ref{lem:conv-fallback} guarantees
		$\lambda_{\min}(\widehat M_k(Y_k^{\mathrm{fb}})) \ge \mu_0 \ge \mu_M$.
		Hence \eqref{eq:conv-mappoised} holds upon completion of the
		geometry phase at every iteration.
	\end{proof}
	
	\subsection{Global convergence and evaluation complexity}\label{subsec:conv-accept}
	
	In our construction, the hard-MAP model satisfies $m_k(0)=f(x_k)$ by design: the constant block of the prior mean
	$\widehat c_k^\pi$ is set to $f(x_k)$ in \eqref{eq:bup-prior-gp}, and the interpolation constraint
	$\widehat A_k\widehat c=b_k$ preserves this value at the center.
	
	\begin{lemma}[Actual--predicted reduction error]\label{lem:conv-aredpred}
		If $m_k(0)=f(x_k)$ and $m_k$ is $(\kappa_f,\kappa_g)$-fully linear on $B(x_k,\Delta_k)$, then for any $\|s_k\|_2\le \Delta_k$,
		\begin{equation}\label{eq:conv-aredpred}
			|\ared_k-\pred_k|\le \kappa_f \Delta_k^2.
		\end{equation}
	\end{lemma}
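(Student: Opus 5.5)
The bound follows from one application of the function-value half of full linearity, \eqref{eq:conv-FL1}, at two points: the center $s=0$ and the trial step $s=s_k$. By the definitions in \eqref{eq:pred-ared},
\[
\ared_k-\pred_k
=\bigl(f(x_k)-m_k(0)\bigr)-\bigl(f(x_k+s_k)-m_k(s_k)\bigr).
\]
The hypothesis $m_k(0)=f(x_k)$ makes the first bracket vanish. The paper notes this holds for the hard-MAP model because the constant block of the prior is $f(x_k)$ and $x_k\in Y_k$ is interpolated. What remains is therefore
\[
\ared_k-\pred_k=-\bigl(f(x_k+s_k)-m_k(s_k)\bigr).
\]

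Next I take absolute values. Since $\|s_k\|_2\le\Delta_k$, the point $s_k$ lies in the ball on which Definition~\ref{def:conv-fullylinear} applies, so \eqref{eq:conv-FL1} gives
\[
|f(x_k+s_k)-m_k(s_k)|\le\kappa_f\Delta_k^2,
\]
which is exactly \eqref{eq:conv-aredpred}.

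Two points need checking. First, if $\pred_k\le0$ the quantity $\ared_k$ may not have been evaluated in the algorithm, but the inequality is a statement about the function values themselves, so it still holds verbatim. Second, without the center-consistency hypothesis the first bracket would contribute a further $\kappa_f\Delta_k^2$ and the constant would become $2\kappa_f$. The hypothesis is what yields the sharper constant.
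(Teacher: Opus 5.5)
Your proof is correct and matches the paper's argument: the center-consistency hypothesis cancels $f(x_k)-m_k(0)$, leaving $\ared_k-\pred_k=m_k(s_k)-f(x_k+s_k)$, which is bounded by \eqref{eq:conv-FL1} at $s=s_k$. Your opening sentence slightly misdescribes this: \eqref{eq:conv-FL1} is needed only at $s_k$, because the center term is removed by the hypothesis rather than by the fully-linear bound.
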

	
	\begin{lemma}[Acceptance at sufficiently small radius]\label{lem:conv-succ-small}
		Suppose $m_k(0)=f(x_k)$ and $m_k$ is $(\kappa_f,\kappa_g)$-fully linear on $B(x_k,\Delta_k)$.
		If $\pred_k>0$ and
		\begin{equation}\label{eq:conv-delta-thr}
			\Delta_k \le \sqrt{\frac{(1-\eta_1)\pred_k}{\kappa_f}},
		\end{equation}
		then $\rho_k\ge \eta_1$ and the iteration is successful.
	\end{lemma}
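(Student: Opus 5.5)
The plan is to compare $\rho_k$ with $1$ using Lemma~\ref{lem:conv-aredpred}. Since $m_k(0)=f(x_k)$, the model is $(\kappa_f,\kappa_g)$-fully linear on $B(x_k,\Delta_k)$, and the step satisfies $\|s_k\|_2\le\Delta_k$ (Assumption~\ref{asmp:conv-cauchy}), that lemma gives $|\ared_k-\pred_k|\le\kappa_f\Delta_k^2$. The argument is the standard one-line trust-region acceptance argument.

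First, since $\pred_k>0$, the safeguarded ratio in \eqref{eq:rho} is the genuine quotient $\rho_k=\ared_k/\pred_k$, not $-\infty$. Dividing the error bound by $\pred_k$ gives
\[
|\rho_k-1|=\frac{|\ared_k-\pred_k|}{\pred_k}\le\frac{\kappa_f\Delta_k^2}{\pred_k}.
\]
Next, we square \eqref{eq:conv-delta-thr}, which is legitimate because both sides are nonnegative. This gives $\kappa_f\Delta_k^2\le(1-\eta_1)\pred_k$, so $|\rho_k-1|\le 1-\eta_1$ and hence $\rho_k\ge\eta_1$. By the acceptance rule of Section~\ref{subsec:prelim-accept}, the iteration is then successful with $x_{k+1}=x_k+s_k$.

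The only potential obstacle is the degenerate case $\kappa_f=0$, where the right-hand side of \eqref{eq:conv-delta-thr} is not defined. Under the constants \eqref{eq:conv-kappas} we have $\kappa_f\ge L_g/2>0$, so this case does not arise. If one nevertheless wanted to cover it, the error bound would give $\ared_k=\pred_k$, hence $\rho_k=1\ge\eta_1$ directly, interpreting the threshold as $+\infty$. Beyond this, the proof involves no difficulty.
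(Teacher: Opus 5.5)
Your proposal is correct and follows essentially the same route as the paper: both use Lemma~\ref{lem:conv-aredpred} to obtain $\rho_k \ge 1-\kappa_f\Delta_k^2/\pred_k$, then apply \eqref{eq:conv-delta-thr} to conclude $\rho_k\ge\eta_1$. Your remarks on $\|s_k\|_2\le\Delta_k$ and on the degenerate case $\kappa_f=0$ make explicit points that the paper leaves implicit, and they do not change the argument.
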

	
	\begin{lemma}[Successful step above a gradient threshold]\label{lem:conv-succ-eps}
		Suppose $m_k(0)=f(x_k)$, $m_k$ is $(\kappa_f,\kappa_g)$-fully linear, and $\|H_k\|_2\le H_{\max}$.
		Fix $\varepsilon\in(0,1]$. If
		\begin{equation}\label{eq:conv-Delta-succ}
			\|\nabla f(x_k)\|_2 \ge \varepsilon
			\quad\text{and}\quad
			\Delta_k \le \Delta_{\mathrm{succ}}(\varepsilon)
			:=
			\min\left\{
			\frac{\varepsilon}{2\kappa_g},\ \frac{\varepsilon}{2H_{\max}},\ \frac{1-\eta_1}{4\kappa_f}\varepsilon,\ 1
			\right\},
		\end{equation}
		then $\rho_k\ge \eta_1$ and the iteration is successful.
	\end{lemma}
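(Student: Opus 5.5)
The plan is to derive a lower bound on $\pred_k$ from the Cauchy condition, then invoke Lemma~\ref{lem:conv-succ-small}, with the threshold $\Delta_{\mathrm{succ}}(\varepsilon)$ chosen to make each step work.

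\emph{Step 1: model gradient.} The gradient condition \eqref{eq:conv-FL2} at $s=0$ gives $\|g_k-\nabla f(x_k)\|_2\le\kappa_g\Delta_k$. Since $\Delta_k\le\varepsilon/(2\kappa_g)$ and $\|\nabla f(x_k)\|_2\ge\varepsilon$, the triangle inequality yields
\[
\|g_k\|_2\ge\varepsilon-\kappa_g\Delta_k\ge\varepsilon/2 .
\]
In particular $g_k\neq0$, so the Cauchy-type condition \eqref{eq:conv-cauchy} of Assumption~\ref{asmp:conv-cauchy} applies.

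\emph{Step 2: predicted reduction.} If $H_k\ne0$, then
\[
\|g_k\|_2/\|H_k\|_2\ge \varepsilon/(2H_{\max})\ge\Delta_k ,
\]
by the second term in the minimum defining $\Delta_{\mathrm{succ}}(\varepsilon)$. If $H_k=0$, the convention makes this ratio $+\infty$. In either case the minimum in \eqref{eq:conv-cauchy} equals $\Delta_k$, and therefore
\[
\pred_k\ge\tfrac12\|g_k\|_2\Delta_k\ge\tfrac14\varepsilon\Delta_k>0 .
\]

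\emph{Step 3: acceptance.} Since $m_k(0)=f(x_k)$ and the model is fully linear, Lemma~\ref{lem:conv-aredpred} gives
\[
|\rho_k-1|\le\kappa_f\Delta_k^2/\pred_k\le 4\kappa_f\Delta_k/\varepsilon .
\]
The third term in the minimum gives $\Delta_k\le(1-\eta_1)\varepsilon/(4\kappa_f)$, which makes the right-hand side at most $1-\eta_1$. Hence $\rho_k\ge\eta_1$. Equivalently, one can check \eqref{eq:conv-delta-thr}, since $(1-\eta_1)\pred_k/\kappa_f\ge(1-\eta_1)\varepsilon\Delta_k/(4\kappa_f)\ge\Delta_k^2$, and then cite Lemma~\ref{lem:conv-succ-small}.

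The only delicate point is Step 2: the min-term of the Cauchy bound must be shown to equal $\Delta_k$ (or to be bounded below by it), which is exactly what the Hessian bound $H_{\max}$ and the factor $\varepsilon/(2H_{\max})$ in $\Delta_{\mathrm{succ}}$ are for. The constraint $\Delta_k\le1$ is not needed for this argument and is presumably included for later use.
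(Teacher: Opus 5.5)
Your proposal is correct and follows essentially the same argument as the paper: bound $\|g_k\|_2\ge\varepsilon/2$ via \eqref{eq:conv-FL2} at $s=0$, use the Cauchy condition with $\Delta_k\le\varepsilon/(2H_{\max})$ to get $\pred_k\ge\varepsilon\Delta_k/4$, and then apply Lemma~\ref{lem:conv-aredpred} to conclude $\rho_k\ge 1-4\kappa_f\Delta_k/\varepsilon\ge\eta_1$. Your explicit checks that $g_k\neq 0$, so that the Cauchy bound applies, and that $\pred_k>0$, so that $\rho_k$ is the ratio rather than $-\infty$, are small details the paper leaves implicit.
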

	
	\begin{lemma}[Finite criticality loop]\label{lem:conv-crit-exit}
		Suppose $m_k(0)=f(x_k)$ and $m_k$ is $(\kappa_f,\kappa_g)$-fully linear on $B(x_k,\Delta_k)$.
		Fix $\varepsilon>0$.
		If $\|\nabla f(x_k)\|_2\ge \varepsilon$ and
		\[
		\Delta_k \le \Delta_{\mathrm{crit}}(\varepsilon):=\min\!\left\{\frac{\varepsilon}{4\kappa_g},\ \frac{\varepsilon}{2\kappa_\Delta}\right\},
		\]
		then $\|g_k\|_2>\kappa_\Delta\Delta_k$, so the criticality safeguard is inactive.
	\end{lemma}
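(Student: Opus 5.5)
The argument is a direct triangle-inequality estimate that compares the model gradient with the true gradient at the center, $s=0$. By the definition of full linearity, \eqref{eq:conv-FL2} evaluated at $s=0$ gives
\[
\|\nabla f(x_k)-g_k\|_2\le \kappa_g\Delta_k ,
\]
since $g_k=\nabla m_k(0)$. The reverse triangle inequality then yields
\[
\|g_k\|_2\ \ge\ \|\nabla f(x_k)\|_2-\kappa_g\Delta_k\ \ge\ \varepsilon-\kappa_g\Delta_k .
\]
This step uses neither $m_k(0)=f(x_k)$ nor any Hessian bound; only the gradient part of full linearity is needed.

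Next I substitute the two radius conditions. From $\Delta_k\le \varepsilon/(4\kappa_g)$ we get $\kappa_g\Delta_k\le\varepsilon/4$, so $\|g_k\|_2\ge \tfrac34\varepsilon$. From $\Delta_k\le\varepsilon/(2\kappa_\Delta)$ we get $\kappa_\Delta\Delta_k\le\varepsilon/2$. Combining the two,
\[
\|g_k\|_2\ \ge\ \tfrac34\varepsilon\ >\ \tfrac12\varepsilon\ \ge\ \kappa_\Delta\Delta_k .
\]
The middle inequality is strict because $\varepsilon>0$. Hence the test $\|g_k\|_2\le\kappa_\Delta\Delta_k$ in the criticality loop (Assumption~\ref{asmp:conv-crit}) fails, and the safeguard is inactive.

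I do not expect a genuine obstacle here; the argument is two lines. The only point to watch is that full linearity must be asserted for the current radius $\Delta_k$ at the moment the criticality test is made. Lemma~\ref{lem:conv-FL-BUP} supplies this, via Corollary~\ref{cor:conv-mappoised-guaranteed}, after each rebuild inside the loop. For the same reason, the constants $\kappa_g$ and $\kappa_\Delta$ are uniform in $k$. It would also be worth remarking that the factor $4$ could be replaced by any constant greater than $2$; the stated choice leaves slack for the later complexity argument.
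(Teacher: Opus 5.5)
Your proposal is correct and follows essentially the same route as the paper: you apply \eqref{eq:conv-FL2} at $s=0$ and the reverse triangle inequality to get $\|g_k\|_2\ge 3\varepsilon/4$, then compare with $\kappa_\Delta\Delta_k\le\varepsilon/2$. Your side observations, that $m_k(0)=f(x_k)$ is not needed and that any factor greater than $2$ would replace the $4$, are also correct.
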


Lemmas~\ref{lem:conv-aredpred}--\ref{lem:conv-crit-exit} are standard
trust-region acceptance and criticality results
\cite[Ch.\,10]{conn2009introduction}; their short proofs follow directly
from the fully-linear bounds
\eqref{eq:conv-FL1}--\eqref{eq:conv-FL2} and the Cauchy decrease condition
(Assumption~\ref{asmp:conv-cauchy}); complete proofs are given in
Appendix~\ref{app:proof-accept}.
	
	With fully-linear accuracy (Section~\ref{subsec:conv-fl}), bounded repair overhead
	(Section~\ref{subsec:conv-repair}), and the acceptance/criticality lemmas above
	in hand, we can now state the main convergence and complexity results.
	
	\phantomsection\label{subsec:conv-main}
	
	\begin{theorem}[Global first-order convergence]\label{thm:conv-global}
		Suppose Assumptions~\ref{asmp:conv-smooth}--\ref{asmp:conv-crit} hold, and the hard-MAP model is used.
		Then
		\[
		\liminf_{k\to\infty}\|\nabla f(x_k)\|_2=0.
		\]
	\end{theorem}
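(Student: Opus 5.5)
The proof will follow the standard contradiction argument for fully-linear trust-region methods \cite[Ch.~10]{conn2009introduction}. Suppose instead that there exist $\varepsilon\in(0,1]$ and $k_0$ with $\|\nabla f(x_k)\|_2\ge\varepsilon$ for all $k\ge k_0$. By Corollary~\ref{cor:conv-mappoised-guaranteed}, Assumption~\ref{asmp:conv-mappoised} holds at every model construction, including the rebuilds inside the criticality loop. Lemma~\ref{lem:conv-FL-BUP} then makes every model used for a step $(\kappa_f,\kappa_g)$-fully linear, with $\|H_k\|_2\le H_{\max}$ and $m_k(0)=f(x_k)$; all constants are iteration independent.

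\textbf{Step 1: finiteness of the criticality loop and a lower bound on $\Delta_k$.} Each pass of the loop rebuilds a fully-linear model at a radius reduced by $\gamma_{\mathrm{dec}}$. Lemma~\ref{lem:conv-crit-exit} shows the loop exits once $\Delta_k\le\Delta_{\mathrm{crit}}(\varepsilon)$, so it terminates after finitely many shrinks. On exit either the radius was not reduced inside the loop, or $\Delta_k\ge\gamma_{\mathrm{dec}}\Delta_{\mathrm{crit}}(\varepsilon)$. We may take $\Delta_{\min}=0$ in the analysis, or note that termination already yields approximate stationarity. Next combine this with Lemma~\ref{lem:conv-succ-eps}. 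Any iteration with $\Delta_k\le\Delta_{\mathrm{succ}}(\varepsilon)$ is successful, so the radius is not decreased there. A standard induction then gives
\[
\Delta_k\ \ge\ \Delta_\star:=\gamma_{\mathrm{dec}}\min\{\Delta_{k_0},\Delta_{\mathrm{succ}}(\varepsilon),\Delta_{\mathrm{crit}}(\varepsilon)\}>0\qquad\forall k\ge k_0 .
\]
The one delicate point is the case $\mathrm{pred}_k\le0$. When $\|g_k\|_2>\kappa_\Delta\Delta_k>0$, the Cauchy condition \eqref{eq:conv-cauchy} forces $\mathrm{pred}_k>0$, so this case cannot occur after the criticality loop.

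\textbf{Step 2: decrease on successful iterations.} On every successful iteration $k\ge k_0$, the post-loop inequality $\|g_k\|_2>\kappa_\Delta\Delta_k\ge\kappa_\Delta\Delta_\star$ and the bound $\|H_k\|_2\le H_{\max}$ from \eqref{eq:conv-Hmax} combine with \eqref{eq:conv-cauchy} to give
\[
f(x_k)-f(x_{k+1})\ \ge\ \eta_1\,\mathrm{pred}_k\ \ge\ \tfrac{\eta_1}{2}\kappa_\Delta\Delta_\star\min\{\Delta_\star,\kappa_\Delta\Delta_\star/H_{\max}\}=:\delta>0 .
\]
(Alternatively, $\|g_k\|_2\ge\varepsilon/2$ follows from full linearity whenever $\Delta_k\le\varepsilon/(2\kappa_g)$, but the $\kappa_\Delta$ route avoids that case split.) Since $f$ is bounded below by Assumption~\ref{asmp:conv-smooth}, there can be only finitely many successful iterations.

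\textbf{Step 3: contradiction.} Finitely many successes mean that all sufficiently late iterations are unsuccessful. Each of these multiplies $\Delta_k$ by $\gamma_{\mathrm{dec}}$, so $\Delta_k\to0$, which contradicts $\Delta_k\ge\Delta_\star$. Therefore $\liminf_k\|\nabla f(x_k)\|_2=0$.

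I expect the main obstacle to be Step 1. The radius is changed both by the criticality loop and by the trust-region update, and the induction must account for both. It must also confirm that the fully-linear constants stay valid for every rebuilt model inside the loop. This in turn needs Assumption~\ref{asmp:conv-prior} to hold at each rebuilt radius, which the assumption provides because it is stated for every model construction.
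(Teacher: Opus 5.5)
Your proof is correct and follows essentially the same route as the paper's. It argues by contradiction, takes the radius lower bound from Lemmas~\ref{lem:conv-succ-eps} and~\ref{lem:conv-crit-exit}, gets the decrease $\pred_k\ge c\,\Delta_k^2$ from the criticality safeguard together with $H_{\max}$, and derives the clash with $f\ge f_{\inf}$; you only order the final two contradictions the other way round, and your handling of the in-loop radius shrink (the extra $\gamma_{\mathrm{dec}}$ factor) and of the case $\pred_k\le 0$ is, if anything, slightly more careful than the paper's write-up.
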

	
	\begin{proof}
		See Appendix~\ref{app:proof-global}.
	\end{proof}
	\begin{theorem}[Worst-case evaluation complexity]\label{thm:conv-complexity}
		Suppose Assumptions~\ref{asmp:conv-smooth}--\ref{asmp:conv-crit} hold, the hard-MAP model is used, and the geometry mechanism satisfies Lemma~\ref{lem:conv-repair-finite}.
		Then there exists a constant $C_{\mathrm{eval}}>0$, independent of $\varepsilon$, such that for any $\varepsilon\in(0,1]$,
		there exists a generated iterate $x_j$ satisfying $\|\nabla f(x_j)\|_2\le \varepsilon$ among the iterates obtained after at most
		\[
		C_{\mathrm{eval}}\,\varepsilon^{-2}
		\]
		objective evaluations.
	\end{theorem}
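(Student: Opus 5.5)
We follow the standard worst-case complexity argument for fully-linear trust-region methods \cite[Ch.~10]{conn2009introduction}. The ingredients are already in place. By Corollary~\ref{cor:conv-mappoised-guaranteed} and Lemma~\ref{lem:conv-FL-BUP}, every model used for a trust-region step is $(\kappa_f,\kappa_g)$-fully linear with $\|H_k\|_2\le H_{\max}$ and $m_k(0)=f(x_k)$.

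Fix $\varepsilon\in(0,1]$ and let $K_\varepsilon$ be the first index with $\|\nabla f(x_k)\|_2\le\varepsilon$. For all $k<K_\varepsilon$ we have $\|\nabla f(x_k)\|_2>\varepsilon$, and we proceed in three steps.

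\textbf{Step 1: lower bound on the radius.} Set
\[
\Delta_{\mathrm{low}}(\varepsilon):=\min\{\Delta_0,\ \gamma_{\mathrm{dec}}\Delta_{\mathrm{succ}}(\varepsilon),\ \gamma_{\mathrm{dec}}\Delta_{\mathrm{crit}}(\varepsilon)\}.
\]
Lemma~\ref{lem:conv-succ-eps} says that any iteration with $\Delta_k\le\Delta_{\mathrm{succ}}(\varepsilon)$ is successful, so the radius is not decreased there. Lemma~\ref{lem:conv-crit-exit} says the criticality loop performs no shrink once $\Delta_k\le\Delta_{\mathrm{crit}}(\varepsilon)$. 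An induction on $k$ then gives $\Delta_k\ge\Delta_{\mathrm{low}}(\varepsilon)$ for all $k<K_\varepsilon$, including radii reached inside the criticality loop. Since $\Delta_{\mathrm{low}}(\varepsilon)\ge c_1\varepsilon$ with $c_1$ depending only on $\kappa_f,\kappa_g,H_{\max},\kappa_\Delta,\eta_1,\gamma_{\mathrm{dec}},\Delta_0$, this bound is linear in $\varepsilon$.

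\textbf{Step 2: counting successful iterations.} On a successful iteration the criticality test has been passed, so $\|g_k\|_2>\kappa_\Delta\Delta_k$. Combining this with the fully-linear gradient bound gives $\|g_k\|_2\ge c_2\varepsilon$, obtained either directly from $\|\nabla f(x_k)\|\le\|g_k\|+\kappa_g\Delta_k$ or via the criticality threshold together with $\Delta_k\ge\Delta_{\mathrm{low}}$. Then \eqref{eq:conv-cauchy} yields
\[
f(x_k)-f(x_{k+1})\ge \eta_1\pred_k\ge \tfrac{\eta_1}{2}c_2\varepsilon\min\{\Delta_{\mathrm{low}},c_2\varepsilon/H_{\max}\}\ge c_3\varepsilon^2 .
\]
Telescoping against $f_{\inf}$ bounds the number of successful iterations by $|\mathcal S|\le (f(x_0)-f_{\inf})/(c_3\varepsilon^2)$.

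\textbf{Step 3: counting unsuccessful iterations and criticality shrinks.} This step is the main obstacle, because radius decreases now come from two sources: rejected steps and shrinks inside the criticality loop. Both are handled by a single log-radius counting argument. Each success multiplies $\Delta$ by at most $\gamma_{\mathrm{inc}}$, each rejection or criticality shrink multiplies it by $\gamma_{\mathrm{dec}}$, and $\Delta$ never falls below $\Delta_{\mathrm{low}}(\varepsilon)$. Hence the total number of decreases $U$ satisfies
\[
U\le \frac{\log\gamma_{\mathrm{inc}}}{|\log\gamma_{\mathrm{dec}}|}|\mathcal S|+\frac{\log(\Delta_0/\Delta_{\mathrm{low}}(\varepsilon))}{|\log\gamma_{\mathrm{dec}}|}.
\]
The first term is $O(\varepsilon^{-2})$ and the logarithmic term is $O(\log\varepsilon^{-1})\le O(\varepsilon^{-2})$ for $\varepsilon\le1$. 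The total number of extra criticality shrinks, $\sum_k L_k$, is at most $U$.

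\textbf{Evaluation total.} We now sum \eqref{eq:eval-count} over $k<K_\varepsilon$. Each main iteration contributes at most one trial evaluation plus $T_{\mathrm{try}}+2n$ base repair evaluations (Lemma~\ref{lem:conv-repair-finite}). Each criticality shrink contributes at most another $T_{\mathrm{try}}+2n$. The initial set adds $2n+1$, and the initialization overhead of Lemma~\ref{lem:accepted-map-prior-acc} adds finitely many more. This gives
\[
N_f\le (2n+1)+(1+T_{\mathrm{try}}+2n)(|\mathcal S|+U)+(T_{\mathrm{try}}+2n)U\le C_{\mathrm{eval}}\varepsilon^{-2},
\]
with $C_{\mathrm{eval}}$ independent of $\varepsilon$. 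Using $\varepsilon^{-2}\ge1$ and $\log(1/\varepsilon)\le\varepsilon^{-2}$ absorbs the constant and logarithmic terms.

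Some care is needed at the criticality loop's stopping test $\Delta_k\le\Delta_{\min}$: we take $\Delta_{\min}=0$, or $\Delta_{\min}<\Delta_{\mathrm{low}}$, so that this test does not trigger before $K_\varepsilon$.
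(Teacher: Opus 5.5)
Your proposal is correct and follows the paper's proof essentially step for step: the same radius floor $\min\{\Delta_0,\gamma_{\mathrm{dec}}\bar\Delta_\varepsilon\}$ from Lemmas~\ref{lem:conv-succ-eps} and~\ref{lem:conv-crit-exit}, the same sufficient-decrease bound on the number of successful iterations, a log-radius potential for the remaining radius decreases, and the per-pass repair costs of Lemma~\ref{lem:conv-repair-finite} summed at the end. The only differences are cosmetic. You count rejections and criticality shrinks in a single potential argument, anchored at $\log(\Delta_0/\Delta_{\mathrm{low}})$, whereas the paper bounds $N_u$ and $N_{\mathrm{crit}}$ separately against $\log(\Delta_{\max}/\underline\Delta_\varepsilon)$; both versions are valid.
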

	
	\begin{proof}
		See Appendix~\ref{app:proof-complexity}.
	\end{proof}
	
	\begin{remark}[Dependence of $C_{\mathrm{eval}}$ on problem data]\label{rem:dim-dependence}
		The constant $C_{\mathrm{eval}}$ depends on $n$ through $m=2n$,
		$\Bgeo$, $\kappa_e$, and the per-iteration repair bound
		$T_{\mathrm{try}}+2n$.  It also depends on the precision-matrix bounds
		$w_{\min}$, $w_{\max}$ (via $\mu_M$ and $\bar\kappa_\pi$),
		the trust-region parameters $\eta_1$, $\gamma_{\mathrm{dec}}$,
		$\gamma_{\mathrm{inc}}$, and the initial optimality gap
		$f(x_0)-f_{\mathrm{low}}$.
		Tracking the dimension dependence explicitly yields
		$C_{\mathrm{eval}} = \mathcal{O}(n^2/\mu_M^2)$; the dominant factor
		is the $n/\mu_M$ scaling of $\kappa_e$ in \eqref{eq:conv-ke}, which
		enters quadratically through the decrease bound.
		This matches the known $\mathcal{O}(n^2\varepsilon^{-2})$ complexity
		of classical fully-linear trust-region methods for derivative-free
		optimization
		\cite[Ch.\,10]{conn2009introduction}.
	\end{remark}
	
	The analysis above shows that BUP-TR converges at the classical
	$\mathcal{O}(\varepsilon^{-2})$ rate despite the prior-regularized
	completion and geometry repair guided by $\lambda_{\min}$.
	We examine how these structural changes translate into
	fixed-budget accuracy, efficiency, and robustness.
	
	\section{Numerical Experiments}\label{sec:experiments}

	We assess the method through BUP-NEWUOA, a NEWUOA-style implementation
	using the recursive accepted-model prior
	(Section~\ref{subsec:bup-accepted-map}).
	BUP-NEWUOA and NEWUOA use the same core Powell-type trust-region
	skeleton, including the acceptance rule and evaluation budget.  The
	comparison is designed to assess the combined effect of the BUP completion
	and geometry mechanism within a NEWUOA-style implementation.
	The cascade ablation in Table~\ref{tab:ablation} separates the main
	implementation components, while implementation-only enhancements such as
	the bounded restart heuristic are reported explicitly in
	Tables~\ref{tab:exp-params} and~\ref{tab:supp-params-bup}.
	This comparison with NEWUOA is the primary empirical test in the
	paper; the remaining solvers provide broader contextual baselines.
	We start with aggregate summaries under the prescribed evaluation
	budget, then move to problem-level convergence views, and finally to
	ablation and noisy experiments.

	\subsection{Experimental setup}\label{subsec:exp-setup}

	The benchmark comprises 17 unconstrained test functions drawn from
	the Mor{\'e}--Garbow--Hillstrom collection~\cite{more2009benchmarking},
	the CUTEst library, and custom engineering-inspired problems,
	spanning narrow-valley, quartic/polynomial, mixed-scale, and
	chain-coupled structures (Appendix~\ref{app:supp-problems}).
	Each function is tested at $n\in\{5,10,20,30,50\}$, yielding
	85 problem--dimension pairs.
	The evaluation budget is $N_{\max}=500(n+1)$ following the PDFO
	convention~\cite{ragonneau2024pdfo}; each solver--problem pair is
	run with five independent seeds $\{42,123,7,256,999\}$.

	The solver \emph{BUP-NEWUOA} uses the recursive accepted-model prior mean,
	local WLS curvature statistics, structured Hessian
	decay ($\alpha_d=1.5$, implemented as $\lambda_{\mathrm{decay}}$),
	and at most two implementation-level restarts.  These restarts are
	implementation-level safeguards, not part of the baseline theoretical
	algorithm.  Because the bounded restart heuristic is enabled only for
	BUP-NEWUOA, we report it explicitly as an implementation safeguard and
	separate its effect in the ablation study whenever the corresponding
	no-restart runs are available.  The shared algorithm parameters are summarized in
	Table~\ref{tab:exp-params}, and the full BUP-NEWUOA-specific settings
	are listed in Appendix~\ref{app:supp-params}.
	\emph{NEWUOA} uses this trust-region skeleton with Powell's least-change
	quadratic model, often described as a minimum-Frobenius-change
	update~\cite{powell2006newuoa}.
	The additional derivative-free baselines are
	\emph{UOBYQA}~\cite{powell2002uobyqa},
	\emph{Nelder--Mead}~\cite{nelder1965simplex}, and
	\emph{CMA-ES}~\cite{hansen2001cmaes}; all use library defaults and
	the common budget.
	The median wall-clock time per run is 3.41\,s for BUP-NEWUOA vs.\
	2.52\,s for NEWUOA ($1.35{\times}$ overhead), a modest premium
	relative to the evaluation-count gains reported below and typically
	secondary for expensive black-box objectives.

	\begin{table}[t]
	\centering
	\caption{Algorithm parameters.}\label{tab:exp-params}
	\small
	\begin{tabular}{@{}llll@{}}
	\toprule
	Category & Parameter & Symbol & Value \\
	\midrule
	\multicolumn{4}{@{}l}{\emph{Shared trust-region parameters}}\\
	& Acceptance / expansion & $\eta_1,\eta_2$ & $0.10,\;0.70$ \\
	& Shrink / expand & $\gamma_{\mathrm{dec}},\gamma_{\mathrm{inc}}$ & $0.5,\;2.0$ \\
	& Interpolation parameter & $m$ & $2n$ \\
	& Set size & $|Y_k|$ & $2n+1$ \\
	& Initial / final radius & $\rho_{\mathrm{beg}},\rho_{\mathrm{end}}$ & $1.0,\;10^{-8}$ \\
	& Budget & $N_{\max}$ & $500(n+1)$ \\
	\midrule
	\multicolumn{4}{@{}l}{\emph{BUP completion (BUP-NEWUOA only)}}\\
	& Prior source & --- & accepted-model \\
	& Local curvature statistics & --- & distance-weighted WLS \\
	& Hessian decay & $\alpha_d$ (code: $\lambda_{\mathrm{decay}}$) & $1.5$ \\
	& Prior transfer & --- & accepted-model center shift \\
	& Restart & --- & True (max 2) \\
	\bottomrule
	\end{tabular}
	\end{table}

	The primary metric is the \emph{success rate under the prescribed evaluation budget}: the
	fraction of test instances (problem--dimension--seed runs) achieving
	$f_{\mathrm{rel}}:=|f(x_{\mathrm{best}})-f^\star|/(|f(x_0)-f^\star|+10^{-16})<\tau$
	within $N_{\max}$, at
	$\tau\in\{10^{-1},10^{-3},10^{-5},10^{-7}\}$.
	We refer to this as the fixed-budget success rate below.
	We also report Dolan--Mor{\'e} performance
	profiles~\cite{dolan2002benchmarking} and More--Wild data
	profiles~\cite{more2009benchmarking}.

	\subsection{Fixed-budget benchmark results}\label{subsec:exp-results}

	Table~\ref{tab:exp-success} reports run-level success rates across all
	85 problem--dimension pairs and five seeds, giving \(425\) runs per solver
	and \(2,125\) solver-runs in total across the five solvers.

	\begin{table}[t]
	\centering
	\caption{Run-level success rate (\%) and median relative error over
	85 problem--dimension pairs, 5~seeds each
	($N_{\max}=500(n+1)$).}\label{tab:exp-success}
	\footnotesize
	\setlength{\tabcolsep}{4.0pt}
	\begin{tabular}{lccccc}
	\toprule
	 & BUP-NEWUOA & NEWUOA & UOBYQA & Nelder--Mead & CMA-ES \\
	\midrule
	$\tau{=}10^{-1}$ & 96.2 & 96.5 & 96.9 & 77.9 & \textbf{98.4} \\
	$\tau{=}10^{-3}$ & \textbf{87.5} & 84.9 & 80.7 & 38.4 & 83.1 \\
	$\tau{=}10^{-5}$ & \textbf{74.8} & 67.1 & 65.9 & 28.0 & 67.1 \\
	$\tau{=}10^{-7}$ & \textbf{72.0} & 61.2 & 59.5 & 24.9 & 64.5 \\
	\midrule
	Med.\ $f_{\mathrm{rel}}$ & $\mathbf{4.2{\times}10^{-15}}$
	  & $3.6{\times}10^{-10}$ & $2.7{\times}10^{-11}$
	  & $5.8{\times}10^{-3}$ & $9.0{\times}10^{-13}$ \\
	\bottomrule
	\end{tabular}
	\end{table}

	\noindent
	At the stricter tolerances $\tau\in\{10^{-3},10^{-5},10^{-7}\}$,
	BUP-NEWUOA achieves the highest success rate among the tested solvers
	in the noiseless benchmark, improving over
	NEWUOA by 2.6, 7.8, and 10.8 percentage points at $\tau\in\{10^{-3},
	10^{-5},10^{-7}\}$.
	Bootstrap 95\% confidence intervals (CIs; $B=10{,}000$), obtained by
	cluster resampling the 85 problem--dimension pairs while retaining all
	5 seeds in each sampled cluster, for the BUP--NEWUOA success-rate
	difference are
	$[3.1,12.7]$ percentage points at $\tau=10^{-5}$ and
	$[5.4,16.9]$ percentage points at $\tau=10^{-7}$.
	BUP-NEWUOA also attains the smallest median final relative error
	($4.2\times10^{-15}$, last row of Table~\ref{tab:exp-success})
	across its 425 runs, five orders of magnitude below NEWUOA
	($3.6\times10^{-10}$).

	We also compare final-error distributions using paired Wilcoxon
	signed-rank tests.  The pairing unit is a problem--dimension pair, and
	the 5~seeds are aggregated by the median
	$\log_{10} f_{\mathrm{rel}}$.  The tests indicate a statistically
	significant advantage for BUP-NEWUOA over NEWUOA
	($p=4.5\times10^{-3}$, Cliff's delta $=-0.16$),
	CMA-ES ($p=1.4\times10^{-4}$, effect size $=-0.61$), and
	UOBYQA ($p=3.3\times10^{-2}$, effect size $=-0.19$).
	The negative signs reflect lower errors for BUP-NEWUOA under our
	difference convention.
	These tests quantify distributional differences in the
	problem--dimension median error metric; they are complementary to,
	and distinct from the run-level success-rate improvements reported
	above.  All three comparisons remain significant after
	Holm--Bonferroni correction at the 5\% level.

	Figure~\ref{fig:pairwise-bup-newuoa} complements the aggregate success
	rates by plotting the median final relative error of BUP-NEWUOA against
	that of NEWUOA on each problem--dimension pair.  The figure shows that
	49 of the 85 pairs fall below the diagonal, 32 above it, and 4 are
	exact ties.  Thus the median-error advantage is visible on a majority of
	the pairs, while NEWUOA remains better on a substantial minority of the
	benchmark.

	\begin{figure}[t]
	\centering
	\includegraphics[width=0.58\textwidth]{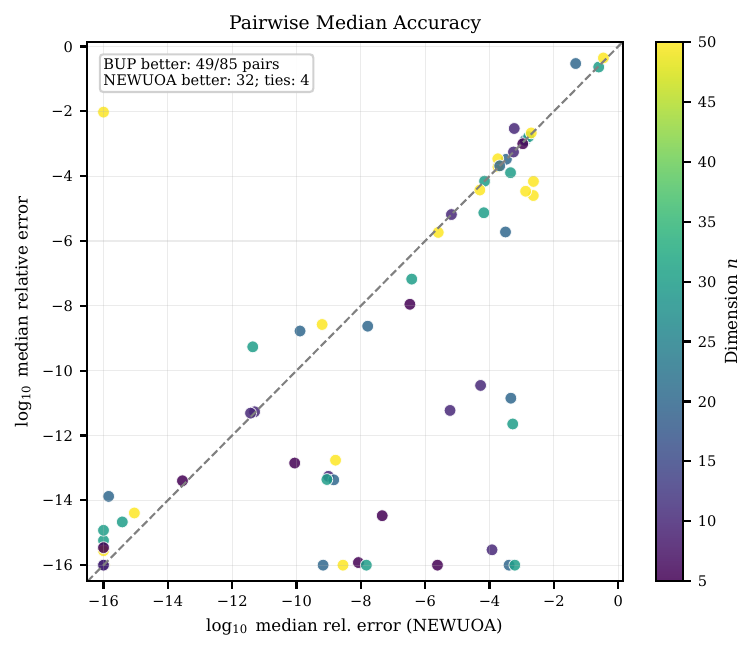}
	\caption{Pairwise comparison of median final relative error on each
	problem--dimension pair.  Points below the diagonal favor
	BUP-NEWUOA; the counts are 49 BUP-NEWUOA wins, 32 NEWUOA wins,
	and 4 ties.}\label{fig:pairwise-bup-newuoa}
	\end{figure}

	Figure~\ref{fig:perf-profiles} shows performance profiles at four
	tolerances, using the first evaluation count at which a run reaches the
	target accuracy.
	Because Dolan--Mor{\'e} profiles condition on solved instances, they
	emphasize efficiency instead of aggregate success.
	Among the derivative-free solvers in our comparison, BUP-NEWUOA
	lies above the remaining baselines over most of the profile range at
	strict tolerances.
	At $\tau=10^{-7}$, BUP-NEWUOA reaches about 79\% of solvable
	instances within twice the best-solver budget, versus about 44\% for
	NEWUOA, 27\% for UOBYQA, and 14\% for CMA-ES.

	\begin{figure}[tp]
	\centering
	\includegraphics[width=0.92\textwidth]{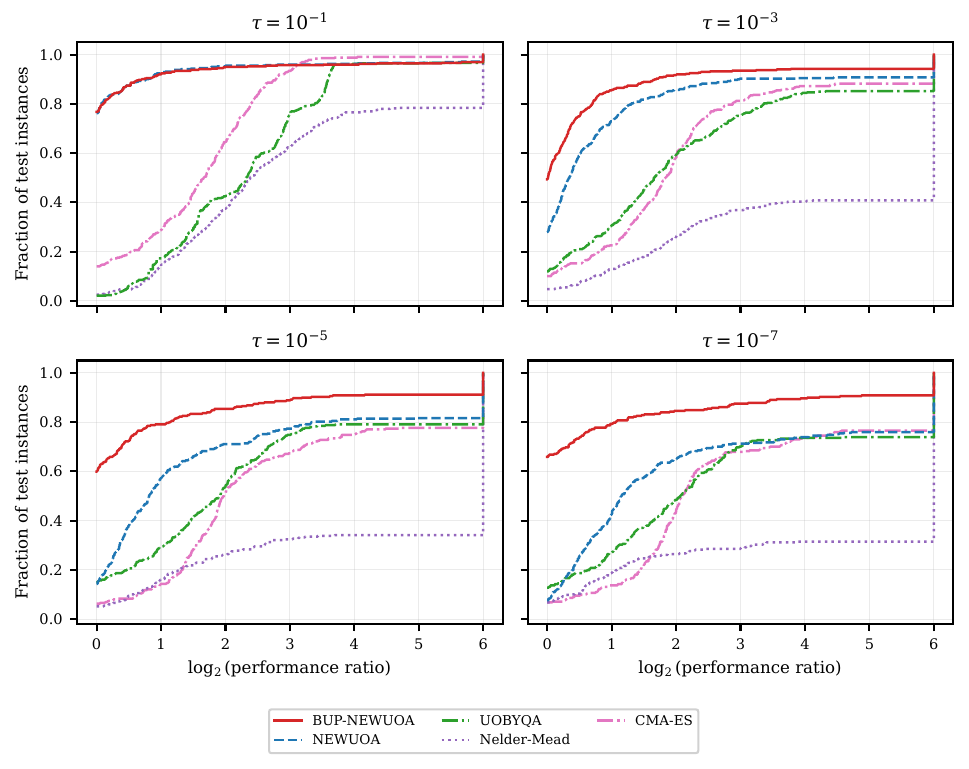}
	\caption{Dolan--Mor{\'e} performance profiles at four accuracy
	levels.  Each panel shows the fraction of test instances solved
	within a given ratio of the best solver's evaluation count.}%
	\label{fig:perf-profiles}
	\end{figure}

	Figure~\ref{fig:data-profiles} shows data profiles.
	At $\tau=10^{-5}$, BUP-NEWUOA reaches about 63\% success within
	$\alpha=100$ simplex gradients, while NEWUOA remains below 57\%;
	NEWUOA needs nearly $\alpha=200$ to exceed 60\%.
	At $\tau=10^{-7}$, the gap widens further and persists through the
	full budget.

	\begin{figure}[tp]
	\centering
	\includegraphics[width=0.92\textwidth]{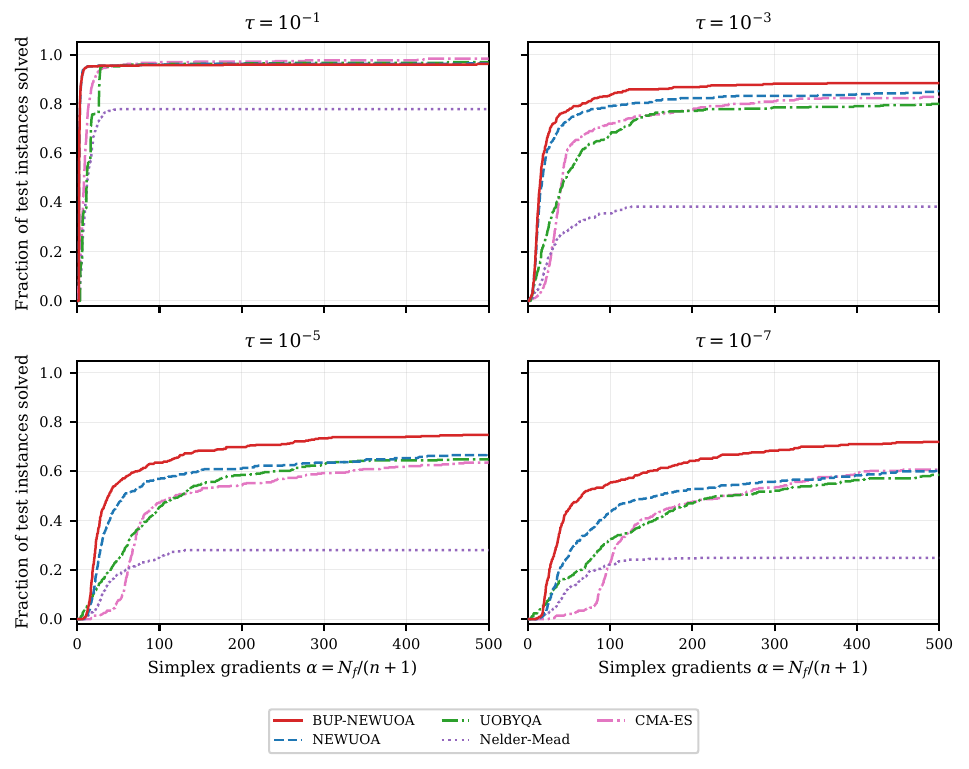}
	\caption{More--Wild data profiles.  Each panel shows the fraction
	of test instances solved as a function of the evaluation budget
	(measured in simplex gradients $\alpha=N_f/(n{+}1)$, where $N_f$
	denotes the number of function evaluations) for one accuracy level
	$\tau$.}\label{fig:data-profiles}
	\end{figure}

	Figure~\ref{fig:conv-overview-n20} shows the median best-so-far
	relative-error curves for all 17 benchmark functions at the fixed
	dimension $n=20$.  Each panel reports the median over 5~seeds at the
	standard evaluation checkpoints.  BUP-NEWUOA attains the lowest final
	checkpoint median on 11 of the 17 functions.  The largest gains appear on
	Rosenbrock, Genrose, SeqProcess, Nondquar, and ScaledRosen, while
	near-ties remain on DixonPrice, Cragglvy, and Fletchcr.

	\begin{figure}[p]
	\centering
	\includegraphics[width=0.95\textwidth]{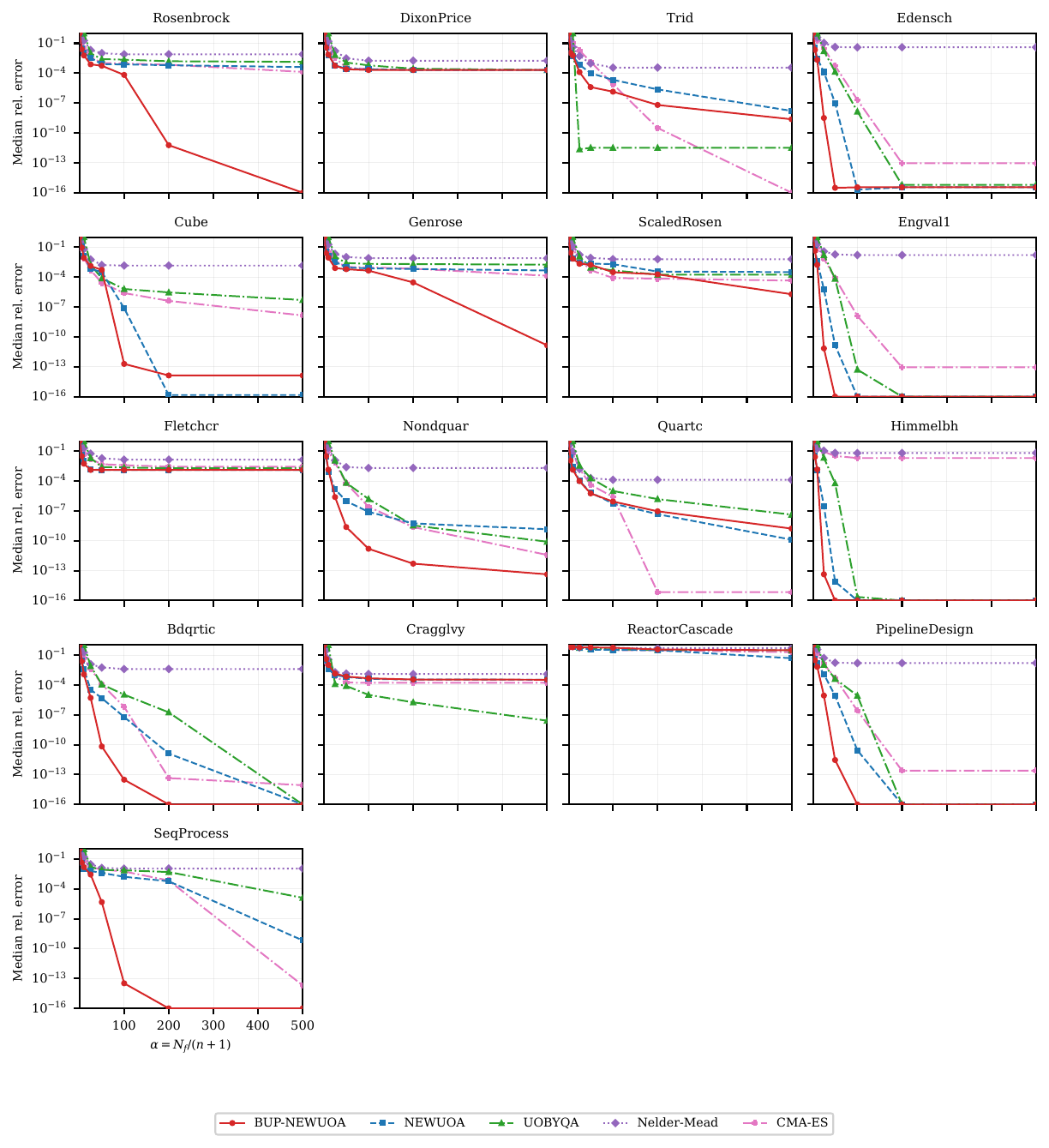}
	\caption{Convergence overview at the fixed dimension $n=20$.
	Each panel shows the median best-so-far relative error over 5~seeds for
	one test function.}
	\label{fig:conv-overview-n20}
	\end{figure}

	We then examine the main sources of the observed gains through
	ablation.
	
	Table~\ref{tab:ablation} reports an incremental cascade ablation on
	the full 85-pair suite.
	The accepted-model prior chain contributes the largest single
	gain (6.6 percentage points at $\tau=10^{-5}$),
	while the structured Hessian decay contributes 1.4 percentage points
	and WLS pooling is nearly neutral ($-0.2$ percentage points at
	$\tau=10^{-5}$).
	Although WLS pooling is nearly neutral in the aggregate
	success rate, we keep it in the default configuration because it is inexpensive,
	preserves the diagonal precision structure, and provides a mild safeguard
	against over-transferring long-range curvature entries in the accepted-model
	prior.
	The cumulative gain from NEWUOA to full BUP-NEWUOA is 7.8 percentage points.

	\begin{table}[t]
	\centering
	\caption{Cascade ablation: success rate (\%) at four accuracy
	levels.  Each row adds one component to the previous
	configuration.}\label{tab:ablation}
	\footnotesize
	\setlength{\tabcolsep}{4.5pt}
	\begin{tabular}{lcccc}
	\toprule
	Configuration & $\tau{=}10^{-1}$ & $\tau{=}10^{-3}$ & $\tau{=}10^{-5}$ & $\tau{=}10^{-7}$ \\
	\midrule
	NEWUOA (baseline)           & 96.5 & 84.9 & 67.1 & 61.2 \\
	\quad + WLS pooling         & 96.2 & 84.9 & 66.8 & 60.5 \\
	\quad + Struct.\ decay      & 96.2 & 85.6 & 68.2 & 62.8 \\
	\quad + Accepted-MAP (full) & \textbf{96.2} & \textbf{87.5} & \textbf{74.8} & \textbf{72.0} \\
	\midrule
	$\Delta$ (cumulative)       & $-0.2$ & $+2.6$ & $+7.8$ & $+10.8$ \\
	\bottomrule
	\end{tabular}
	\end{table}

	Figure~\ref{fig:ablation-profile} presents the same cascade in profile
	form, showing how the four configurations separate as the evaluation
	budget increases.  The accepted-model prior is the only component that
	produces a large separation at the two strictest tolerances, consistent
	with the aggregate gains in Table~\ref{tab:ablation}.

	\begin{figure}[!htbp]
	\centering
	\includegraphics[width=0.80\textwidth]{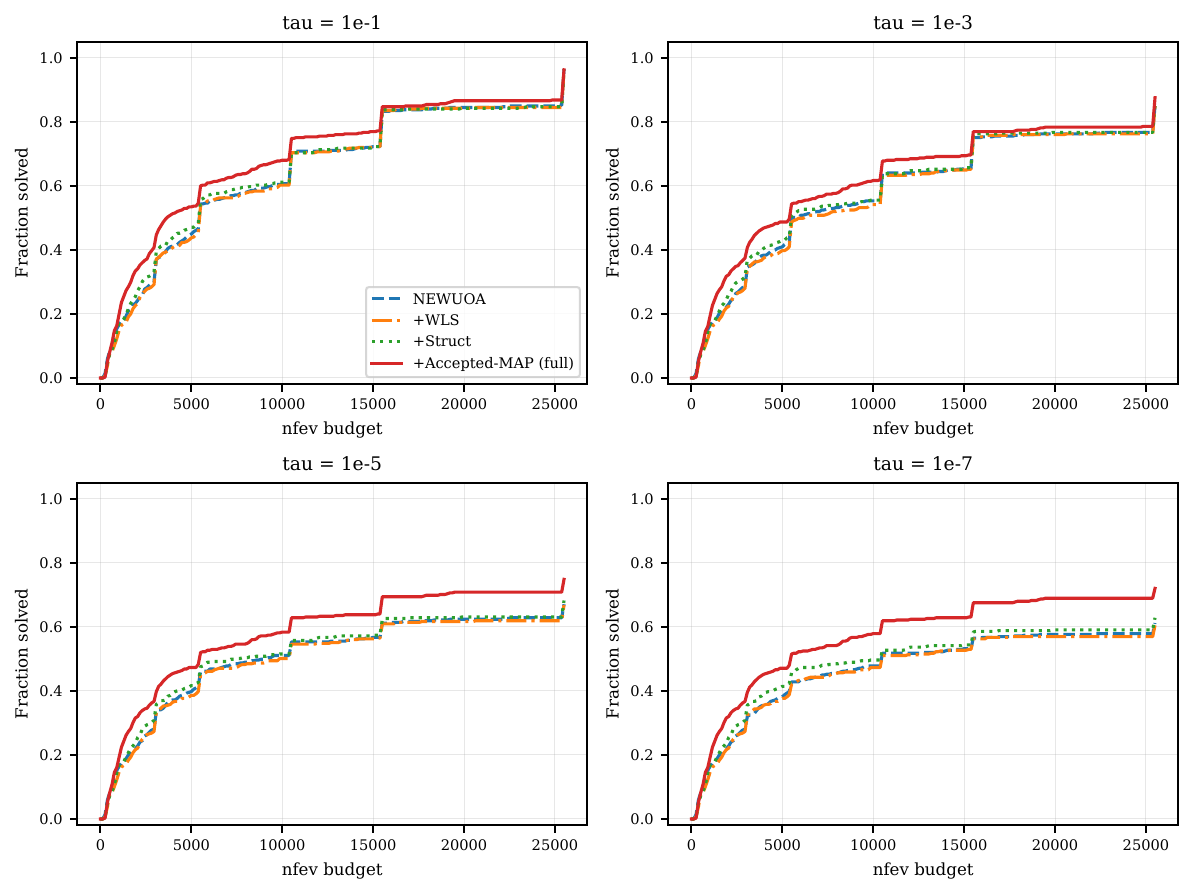}
	\caption{Cascade-ablation profiles across evaluation budgets.  Each
	panel shows the fraction of runs solved at one target accuracy; the
	curves correspond to the rows of Table~\ref{tab:ablation}.}
	\label{fig:ablation-profile}
	\end{figure}

	Across the suite, BUP-NEWUOA uses 2--8\% more evaluations than
	NEWUOA due to geometry repair triggers.  With the diagonal precision
	used here, forming the MAP Schur complement costs
	$\mathcal{O}(m^2q)$, followed by an $\mathcal{O}(m^3)$ Cholesky
	solve; for $m=2n$ and $q=\mathcal{O}(n^2)$, this remains modest for
	the tested dimensions.
	At $\tau=10^{-7}$, BUP-NEWUOA fails on about 28\% of the
	problem--dimension--seed runs.
	Failures concentrate on high-dimensional instances ($n\ge30$,
	budget-limited) and narrow curved valleys (Fletchcr, Quartc)
	where the quadratic model class is inherently limiting.

	\FloatBarrier
	
	\subsection{Robustness under evaluation noise}\label{subsec:exp-noisy}

	We test all solvers under homoscedastic additive Gaussian noise
	$f_{\mathrm{obs}}(x)=f(x)+\sigma\xi$, $\xi\sim\mathcal{N}(0,1)$,
	with $\sigma\in\{0,10^{-4},10^{-3},10^{-2},10^{-1}\}$ on the
	full 17-problem suite at $n\in\{10,20,30\}$, 5~seeds
	(255 runs per solver per noise level).
	The solvers are given only noisy observations, whereas success is assessed using
	the underlying noiseless objective.

	Figure~\ref{fig:noisy-main}(a) and Table~\ref{tab:noisy-summary}
	show the success rate at $\tau=10^{-3}$.
	BUP-NEWUOA deteriorates more gradually: at $\sigma=10^{-2}$ it retains
	68.2\% success (Wilson 95\% CI $[62.3,73.6]$) versus 11.0\% for
	NEWUOA ($[7.7,15.4]$).
	The prior provides implicit regularization that stabilizes model
	coefficients, whereas Powell's least-change quadratic model can transmit
	noisy interpolation data directly into the Hessian.
	Figure~\ref{fig:noisy-by-dim} shows that this pattern persists
	across all tested dimensions: at $\sigma=10^{-2}$, BUP-NEWUOA
	achieves 76.5\%, 67.1\%, and 61.2\% success for
	$n=10,20,30$, versus 22.4\%, 7.1\%, and 3.5\% for NEWUOA.
	The heatmap in Figure~\ref{fig:noisy-main}(b) also shows that UOBYQA
	and Nelder--Mead deteriorate steadily as noise increases, with
	success dropping to 17.6\% and 19.2\% respectively at $\sigma=10^{-2}$.

	CMA-ES is the most noise-robust solver in our comparison: it
	retains 83--85\% success at $\sigma\in[10^{-4},10^{-2}]$ and
	still achieves 62.4\% at $\sigma=10^{-1}$, consistently
	surpassing BUP-NEWUOA at all noise levels
	$\sigma\ge10^{-4}$.
	This advantage has two main sources: (i)~CMA-ES aggregates a
	population of $\lambda\approx 4+\lfloor 3\ln n\rfloor$ evaluations
	per generation, so rank-based selection implicitly averages out
	per-evaluation noise; (ii)~its step-size adaptation (cumulative path)
	is inherently smoothed, whereas our hard-MAP interpolation
	incorporates each noisy evaluation directly into the model.
	Within the quadratic-interpolation methods in this comparison,
	BUP-NEWUOA performs best in these noisy tests and remains closer to
	CMA-ES at moderate noise levels ($\sigma\le 10^{-3}$).
	These results also indicate a limitation of hard interpolation under
	noise.
	The soft-MAP variant (Section~\ref{subsec:algo-soft}), which replaces exact
	interpolation by a penalized fit, is the natural next step for closing the gap
	between interpolation-based BUP-TR and population-based methods such as
	CMA-ES in noisy regimes.

	\begin{figure}[tp]
	\centering
	\begin{subfigure}[t]{0.68\textwidth}
		\centering
		\includegraphics[width=\textwidth]{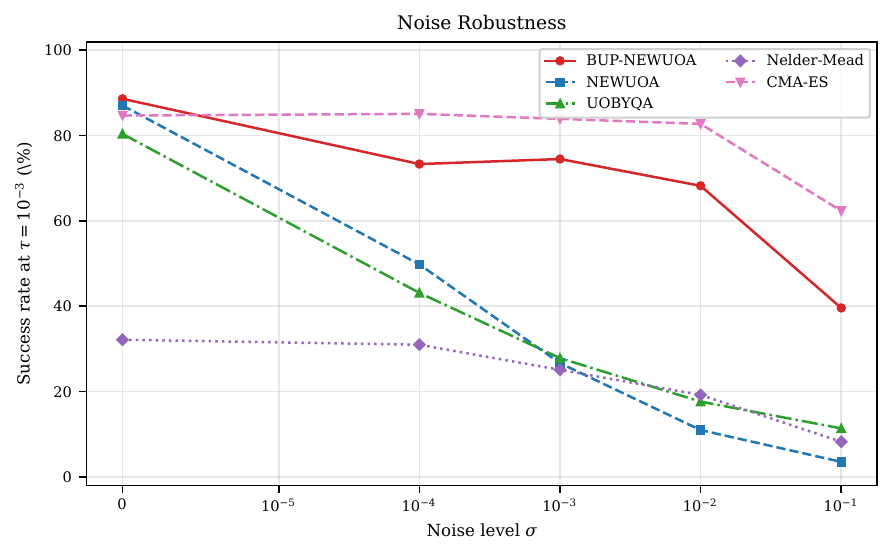}
		\caption{Success-rate degradation vs.\ $\sigma$.}
	\end{subfigure}
	\medskip

	\begin{subfigure}[t]{0.68\textwidth}
		\centering
		\includegraphics[width=\textwidth]{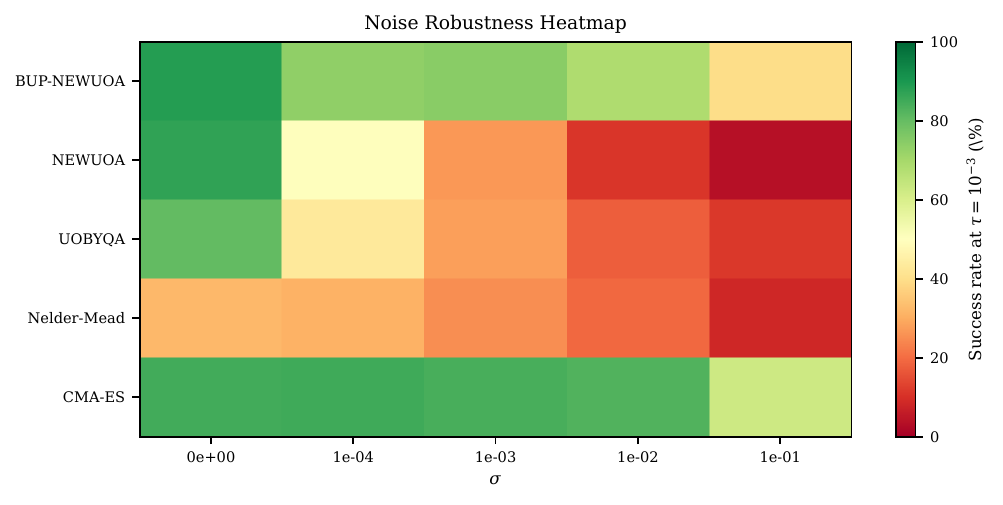}
		\caption{Success-rate heatmap (solver $\times$ $\sigma$).}
	\end{subfigure}
	\caption{Noisy benchmark: success rate at $\tau=10^{-3}$ under
	increasing noise.}\label{fig:noisy-main}
	\end{figure}

	\begin{figure}[tp]
	\centering
	\includegraphics[width=\textwidth]{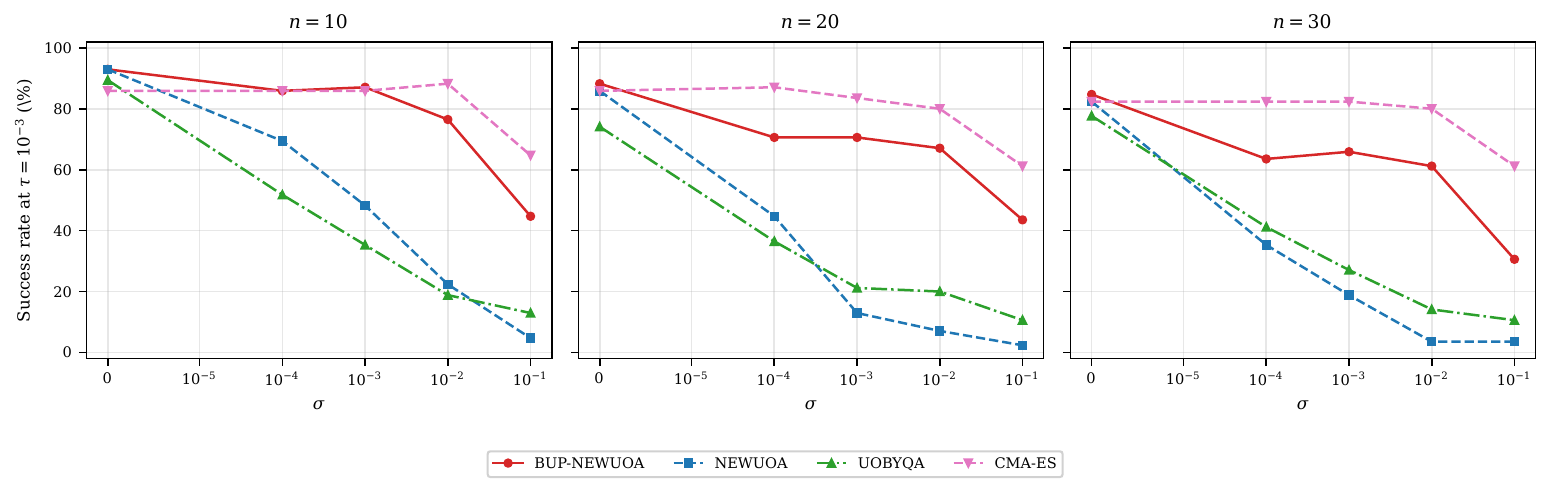}
	\caption{Success rate at $\tau=10^{-3}$ under noise, stratified by
	dimension.  Across the tested dimensions, BUP-NEWUOA remains the
	highest-performing quadratic-interpolation solver in this comparison.}\label{fig:noisy-by-dim}
	\end{figure}

	\begin{table}[t]
	\centering
	\caption{Noisy benchmark: success rate (\%) at $\tau=10^{-3}$
	with Wilson 95\% CIs (255 runs per cell).}\label{tab:noisy-summary}
	\footnotesize
	\setlength{\tabcolsep}{4.0pt}
	\begin{tabular}{rccc}
	\toprule
	$\sigma$ & BUP-NEWUOA & NEWUOA & CMA-ES \\
	\midrule
	0              & 88.6~[84.1, 92.0] & 87.1~[82.4, 90.6]
	               & 84.7~[79.8, 88.6] \\
	$10^{-4}$      & 73.3~[67.6, 78.4] & 49.8~[43.7, 55.9]
	               & 85.1~[80.2, 88.9] \\
	$10^{-3}$      & 74.5~[68.8, 79.5] & 26.7~[21.6, 32.4]
	               & 83.9~[78.9, 87.9] \\
	$10^{-2}$      & 68.2~[62.3, 73.6] & 11.0~[7.7, 15.4]
	               & 82.7~[77.6, 86.9] \\
	$10^{-1}$      & 39.6~[33.8, 45.7] &  3.5~[1.9, 6.6]
	               & 62.4~[56.3, 68.1] \\
	\bottomrule
	\end{tabular}
	\end{table}

	\noindent
	The noisy benchmark shows that the present hard-MAP construction
	continues to outperform NEWUOA and remains the
	strongest quadratic-interpolation method tested here in this regime, although
	CMA-ES is more noise-robust overall.
	
	\FloatBarrier


\section{Conclusion}\label{sec:concl}

We have introduced BUP-TR, a prior-regularized completion rule for
underdetermined quadratic interpolation in derivative-free trust-region
methods.  The method extends minimum-norm and minimum-change completions by
letting a prior provide both the reference model and the metric in the
completion problem.  In coefficient space, the hard-MAP model is the metric
projection of the prior mean onto the affine interpolation set.

The same completion system gives MAP-poisedness, a spectral condition used
to certify and repair interpolation geometry.  Under smoothness,
MAP-poisedness, bounded precision, and prior-accuracy assumptions, the
hard-MAP model is fully linear.  The standard trust-region decrease
argument then yields global first-order convergence and an
$\mathcal{O}(\varepsilon^{-2})$ evaluation-complexity bound that includes
repair evaluations.

The NEWUOA-style implementation BUP-NEWUOA improves fixed-budget accuracy on
the benchmark suite considered here, especially at stricter tolerances.  The
experiments indicate that previously evaluated values can be used
constructively in the completion metric and in the associated geometry test.
Future work includes probabilistic fully-linear theory and parameter selection
for soft-MAP variants in noisy settings, sharper rules for constructing the
precision matrix, and large-scale implementations with structured or low-rank
precision matrices.

\section*{Code availability}
The Python implementation of BUP-NEWUOA and all scripts needed to
reproduce the numerical experiments are available at
\url{https://github.com/huwei0121/BUPTR}.

\section*{Acknowledgments}
The work of W.~Hu, Y.-X.~Yuan, and L.~Zhang was supported in part by
NSFC and the Chinese Academy of Sciences.  The work of P.~Xie was
supported in part by the U.S.\ Department of Energy, Office of Science.


	\appendix
	
	\section*{Technical Appendices}
	\addcontentsline{toc}{section}{Technical Appendices}
	The appendices are split into two parts.
	Sections~\ref{app:norm}--\ref{app:repair-details} collect the
	technical derivations that support the main theory and algorithm,
	including coefficient scaling, closed-form MAP projections,
	fallback MAP-poisedness, surrogate-to-assumption bridges, full
	convergence proofs, and repair implementation details.
	The supplementary-material block that follows records benchmark
	definitions and parameter tables.
	
	\section{Scaled Quadratic Representation}\label{app:norm}
	
	This appendix records the scaled quadratic basis and the mapping between scaled coefficients and the
	original (unscaled) quadratic model. It also provides basic norm bounds used in Section~\ref{sec:conv}.
	
	\subsection{Quadratic basis and scaling}\label{app:norm-basis}
	
	Let $s\in\mathbb{R}^n$ and define the (unscaled) quadratic feature vector
	\begin{equation}\label{eq:app-phi}
		\phi(s)
		:=
		\begin{bmatrix}
			1\\
			s\\
			\qvec(s)
		\end{bmatrix}
		\in\mathbb{R}^q,
		\qquad
		q=\frac{(n+1)(n+2)}{2}.
	\end{equation}
	Let $\Delta>0$ and set the scaled coordinate $u:=s/\Delta$.
	Define the scaled feature vector
	\begin{equation}\label{eq:app-phihat}
		\widehat\phi(u)
		:=
		\begin{bmatrix}
			1\\
			u\\
			\qvec(u)
		\end{bmatrix}
		\in\mathbb{R}^q.
	\end{equation}
	
	\subsection{Coefficient mapping}\label{app:norm-map}
	
	A quadratic model is parameterized as
	\begin{equation}\label{eq:app-model}
		m(s)=c_0 + g^\top s + \frac12 s^\top H s,
		\qquad
		H=H^\top.
	\end{equation}
	Let the coefficient vector be
	\begin{equation}\label{eq:app-cvec}
		c:=
		\begin{bmatrix}
			c_0\\
			g\\
			\mathrm{vech}(H)
		\end{bmatrix}
		\in\mathbb{R}^q.
	\end{equation}
	Define the scaled coefficient vector
	\begin{equation}\label{eq:app-chat}
		\widehat c
		:=
		\begin{bmatrix}
			c_0\\
			\Delta g\\
			\Delta^2 \mathrm{vech}(H)
		\end{bmatrix}.
	\end{equation}
	Then, for any $s=\Delta u$,
	\begin{equation}\label{eq:app-feature-identity}
		m(\Delta u)
		=
		\phi(\Delta u)^\top c
		=
		\widehat\phi(u)^\top \widehat c.
	\end{equation}
	Proof.
	The constant term is unchanged. The linear term satisfies $g^\top(\Delta u)=(\Delta g)^\top u$.
	For the quadratic term,
	\[
	\frac12(\Delta u)^\top H(\Delta u)
	=\Delta^2 \cdot \frac12 u^\top H u
	=\Delta^2\,\qvec(u)^\top \mathrm{vech}(H)
	=\qvec(u)^\top \bigl(\Delta^2\mathrm{vech}(H)\bigr),
	\]
	where we used \eqref{eq:qvec-identity} with $s=u$.
	Collecting the three blocks yields \eqref{eq:app-feature-identity}. \hfill$\square$
	
	\subsection{Gradient/Hessian mapping and basic bounds}\label{app:norm-deriv}
	
	From \eqref{eq:app-model},
	\begin{equation}\label{eq:app-grad-hess}
		\nabla m(s)=g+Hs,
		\qquad
		\nabla^2 m(s)=H.
	\end{equation}
	In particular,
	\begin{equation}\label{eq:app-gh-at0}
		\nabla m(0)=g,
		\qquad
		\nabla^2 m(0)=H.
	\end{equation}
	From \eqref{eq:app-chat}, the mappings are
	\begin{equation}\label{eq:app-mapback}
		g=\Delta^{-1}\widehat c^{(g)},
		\qquad
		\mathrm{vech}(H)=\Delta^{-2}\widehat c^{(H)}.
	\end{equation}
	
	Bounded features on the unit ball.
	For $\|u\|_2\le 1$,
	\begin{align}
		\|\widehat\phi(u)\|_2^2
		&= 1 + \|u\|_2^2 + \|\qvec(u)\|_2^2 \nonumber\\
		&\le 1 + 1 + \frac12\|u\|_2^4 \label{eq:app-phihat-bound}\\
		&\le \frac52. \nonumber
	\end{align}
	The bound $\|\qvec(u)\|_2^2\le \frac12\|u\|_2^4$ follows by expanding
	$\|\qvec(u)\|_2^2=\frac14\sum_i u_i^4+\sum_{i<j}u_i^2u_j^2
	=\frac12\|u\|_2^4-\frac14\sum_i u_i^4 \le \frac12\|u\|_2^4$.
	Hence,
	\begin{equation}\label{eq:app-Bphi}
		\sup_{\|u\|_2\le 1}\|\widehat\phi(u)\|_2 \le \sqrt{\frac52}.
	\end{equation}
	
	From vech to operator norm (symmetric case).
	Let $H=H^\top$ and $v=\mathrm{vech}(H)$. Then
	\begin{equation}\label{eq:app-vech-to-op}
		\|H\|_2 \le \|H\|_F \le \sqrt{2}\,\|v\|_2.
	\end{equation}
	Proof.
	Write $\|H\|_F^2=\sum_i H_{ii}^2 + 2\sum_{i<j}H_{ij}^2
	\le 2\big(\sum_i H_{ii}^2 + \sum_{i<j}H_{ij}^2\big)=2\|v\|_2^2$.
	Then $\|H\|_2\le \|H\|_F\le \sqrt{2}\|v\|_2$. \hfill$\square$
	
	\section{Weighted Projection Formula for Hard-MAP Completion}\label{app:hardmap}
	
	This appendix derives the closed form of the hard-MAP estimator and proves the projector properties used
	in Lemma~\ref{lem:conv-FL-BUP} (nonexpansiveness in the $\widehat W$-norm).
	
	\subsection{KKT derivation of the projection formula}\label{app:hardmap-kkt}
	
	Fix $k$ and suppress the index. Let $\widehat A\in\mathbb{R}^{(m+1)\times q}$ and $b\in\mathbb{R}^{m+1}$.
	Given $\widehat c^\pi\in\mathbb{R}^q$ and $\widehat W\succ 0$, the hard-MAP problem is
	\begin{equation}\label{eq:app-hardmap-prob}
		\min_{\widehat c\in\mathbb{R}^q}\ \frac12\|\widehat c-\widehat c^\pi\|_{\widehat W}^2
		\quad\text{s.t.}\quad
		\widehat A\widehat c=b.
	\end{equation}
	The Lagrangian is
	\[
	\mathcal{L}(\widehat c,\lambda)=\frac12(\widehat c-\widehat c^\pi)^\top \widehat W(\widehat c-\widehat c^\pi)
	+\lambda^\top(\widehat A\widehat c-b),
	\]
	where $\lambda\in\mathbb{R}^{m+1}$.
	First-order KKT conditions are
	\begin{align}
		\nabla_{\widehat c}\mathcal{L}= \widehat W(\widehat c-\widehat c^\pi)+\widehat A^\top\lambda &=0, \label{eq:app-kkt1}\\
		\widehat A\widehat c-b &=0. \label{eq:app-kkt2}
	\end{align}
	From \eqref{eq:app-kkt1},
	\begin{equation}\label{eq:app-c-sol1}
		\widehat c=\widehat c^\pi-\widehat W^{-1}\widehat A^\top\lambda.
	\end{equation}
	Substitute into \eqref{eq:app-kkt2}:
	\[
	\widehat A\widehat c^\pi-\widehat A\widehat W^{-1}\widehat A^\top\lambda=b,
	\]
	hence
	\begin{equation}\label{eq:app-lam-sol}
		\lambda
		=
		\big(\widehat A\widehat W^{-1}\widehat A^\top\big)^{-1}(\widehat A\widehat c^\pi-b),
	\end{equation}
	provided $\widehat A\widehat W^{-1}\widehat A^\top$ is invertible.
	Plugging \eqref{eq:app-lam-sol} into \eqref{eq:app-c-sol1} yields
	\begin{equation}\label{eq:app-hardmap-closed}
		\widehat c
		=
		\widehat c^\pi
		+
		\widehat W^{-1}\widehat A^\top
		\big(\widehat A\widehat W^{-1}\widehat A^\top\big)^{-1}
		\big(b-\widehat A\widehat c^\pi\big),
	\end{equation}
	which is the formula used in Section~\ref{sec:bup}.
	
	\subsection{Weighted projection and nonexpansiveness}\label{app:hardmap-proj}
	
	Define
	\begin{equation}\label{eq:app-Mdef}
		\widehat M := \widehat A\widehat W^{-1}\widehat A^\top,
	\end{equation}
	and
	\begin{equation}\label{eq:app-Pdef}
		P := I - \widehat W^{-1}\widehat A^\top \widehat M^{-1}\widehat A.
	\end{equation}
	
	\begin{lemma}[Weighted projector formula]\label{lem:app-projector}
		If $\widehat M$ is invertible, then:
		\begin{enumerate}[leftmargin=2em]
			\item $P^2=P$ (idempotence).
			\item $\mathrm{Range}(P)=\mathrm{Null}(\widehat A)$ and $\mathrm{Null}(P)=\mathrm{Range}(\widehat W^{-1}\widehat A^\top)$.
			\item $P$ is $\widehat W$-self-adjoint: $\widehat W P = P^\top \widehat W$.
		\end{enumerate}
	\end{lemma}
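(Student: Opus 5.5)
The argument is a direct algebraic verification. Throughout, write $G:=\widehat W^{-1}\widehat A^\top\widehat M^{-1}\widehat A$, so that $P=I-G$. I will first prove the basic identity $\widehat A G=\widehat A\widehat W^{-1}\widehat A^\top\widehat M^{-1}\widehat A=\widehat M\widehat M^{-1}\widehat A=\widehat A$. Every item follows from it.

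\emph{Idempotence.} Squaring gives $G^2=\widehat W^{-1}\widehat A^\top\widehat M^{-1}(\widehat A G)=G$. Hence $P^2=I-2G+G^2=I-G=P$.

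\emph{Range and null space.} The basic identity gives $\widehat A P=\widehat A-\widehat A G=0$, so $\mathrm{Range}(P)\subseteq\mathrm{Null}(\widehat A)$. Conversely, if $\widehat A v=0$ then $Gv=0$, so $Pv=v$ and $v\in\mathrm{Range}(P)$. For the null space, $Pv=0$ means $v=Gv=\widehat W^{-1}\widehat A^\top(\widehat M^{-1}\widehat A v)$, which lies in $\mathrm{Range}(\widehat W^{-1}\widehat A^\top)$. Conversely, take $v=\widehat W^{-1}\widehat A^\top z$. Then $Gv=\widehat W^{-1}\widehat A^\top\widehat M^{-1}\widehat M z=v$, so $Pv=0$.

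\emph{$\widehat W$-self-adjointness.} Compute $\widehat W G=\widehat A^\top\widehat M^{-1}\widehat A$. This matrix is symmetric because $\widehat M$ is symmetric: $\widehat W$ is SPD, so $\widehat W^{-1}$ is symmetric. Hence $\widehat W P=\widehat W-\widehat A^\top\widehat M^{-1}\widehat A$ is symmetric, i.e.\ $\widehat W P=(\widehat W P)^\top=P^\top\widehat W$.

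No step is a real obstacle. The only points needing care are the invertibility of $\widehat M$, used in cancelling $\widehat M\widehat M^{-1}$, and the symmetry of $\widehat M^{-1}$, used in item~3. The latter follows from $\widehat W$ being symmetric.
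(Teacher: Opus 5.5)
Your proposal is correct and follows essentially the same route as the paper: a direct algebraic verification resting on $\widehat A\widehat W^{-1}\widehat A^\top\widehat M^{-1}=I$ (equivalently your identity $\widehat A G=\widehat A$), with item~3 coming from the symmetry of $\widehat W$ and $\widehat M^{-1}$. Your argument for $\mathrm{Null}(P)$ is actually more explicit than the paper's, which only sketches the two inclusions.
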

	
	\begin{proof}
		(1) Idempotence.
		Compute
		\[
		P^2
		=
		\Big(I-\widehat W^{-1}\widehat A^\top \widehat M^{-1}\widehat A\Big)^2
		=
		I - 2\widehat W^{-1}\widehat A^\top \widehat M^{-1}\widehat A
		+ \widehat W^{-1}\widehat A^\top \widehat M^{-1}\widehat A\widehat W^{-1}\widehat A^\top \widehat M^{-1}\widehat A.
		\]
		Using $\widehat A\widehat W^{-1}\widehat A^\top=\widehat M$,
		\[
		\widehat A\widehat W^{-1}\widehat A^\top \widehat M^{-1} = I,
		\]
		hence the last term reduces to $\widehat W^{-1}\widehat A^\top \widehat M^{-1}\widehat A$, so $P^2=P$.
		
		(2) Range/Null.
		First, $\widehat A P = \widehat A - \widehat M \widehat M^{-1}\widehat A = 0$, hence $\mathrm{Range}(P)\subseteq \mathrm{Null}(\widehat A)$.
		Conversely, if $\widehat A v=0$, then $Pv=v$, so $v\in \mathrm{Range}(P)$, proving $\mathrm{Range}(P)=\mathrm{Null}(\widehat A)$.
		
		Also, $Pv=0$ iff $v\in \mathrm{Range}(\widehat W^{-1}\widehat A^\top)$, and the converse inclusion follows by direct substitution,
		so $\mathrm{Null}(P)=\mathrm{Range}(\widehat W^{-1}\widehat A^\top)$.
		
		(3) $\widehat W$-self-adjointness.
		Compute $P^\top = I - \widehat A^\top \widehat M^{-1}\widehat A \widehat W^{-1}$, hence
		$P^\top \widehat W = \widehat W - \widehat A^\top \widehat M^{-1}\widehat A = \widehat W P$.
	\end{proof}
	
	\begin{lemma}[Nonexpansiveness in $\widehat W$-norm]\label{lem:app-nonexp}
		If $\widehat M$ is invertible, then for all $v\in\mathbb{R}^q$,
		\begin{equation}\label{eq:app-nonexp}
			\|Pv\|_{\widehat W} \le \|v\|_{\widehat W}.
		\end{equation}
	\end{lemma}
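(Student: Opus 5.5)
The plan is to use the two structural facts already established in Lemma~\ref{lem:app-projector}: $P$ is idempotent and $\widehat W$-self-adjoint. Together they make $P$ an orthogonal projector with respect to the inner product $\langle x,y\rangle_{\widehat W}:=x^\top\widehat W y$. Nonexpansiveness then follows from a Pythagorean decomposition.

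First, for any $v\in\mathbb{R}^q$, I split $v=Pv+(I-P)v$. Next I verify that the two pieces are $\widehat W$-orthogonal. By item 3 of Lemma~\ref{lem:app-projector}, $\widehat W P=P^\top\widehat W$, so
\[
\langle Pv,(I-P)v\rangle_{\widehat W}=v^\top P^\top\widehat W(I-P)v=v^\top \widehat W P(I-P)v=v^\top\widehat W(P-P^2)v=0,
\]
where the last equality uses idempotence, item 1.

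Expanding $\|v\|_{\widehat W}^2=\|Pv+(I-P)v\|_{\widehat W}^2$, the cross term vanishes. This gives
\[
\|v\|_{\widehat W}^2=\|Pv\|_{\widehat W}^2+\|(I-P)v\|_{\widehat W}^2\ \ge\ \|Pv\|_{\widehat W}^2,
\]
since $\widehat W\succ0$ makes the second term nonnegative. Taking square roots yields \eqref{eq:app-nonexp}. As a byproduct, the same identity shows that $I-P$ is also $\widehat W$-nonexpansive, which may be convenient later in Lemma~\ref{lem:conv-FL-BUP}.

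There is no real obstacle here. The only point requiring care is the transposition step: the product must be written as $P^\top\widehat W$, not $\widehat W P^\top$, before the self-adjointness identity can be applied. The invertibility of $\widehat M$ is needed only so that $P$ is well defined, and it is inherited from the hypotheses of Lemma~\ref{lem:app-projector}.

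An alternative route, if one prefers to avoid the orthogonality computation, is to use the change of variables $w=\widehat W^{1/2}v$. Under it, $\widehat W^{1/2}P\widehat W^{-1/2}=I-B^\top(BB^\top)^{-1}B$ with $B=\widehat A\widehat W^{-1/2}$. This is a symmetric idempotent matrix, so its spectral norm is at most $1$, which gives the same conclusion.
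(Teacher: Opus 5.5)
Your proof is correct and follows essentially the same route as the paper: idempotence and $\widehat W$-self-adjointness from Lemma~\ref{lem:app-projector} make $Pv$ and $(I-P)v$ orthogonal in the $\widehat W$-inner product, and the Pythagorean identity gives the bound. Your explicit cross-term computation fills in the orthogonality step that the paper only asserts, and your alternative argument in the variables $w=\widehat W^{1/2}v$ is also valid.
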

	
	\begin{proof}
		By Lemma~\ref{lem:app-projector}(3), $P$ is self-adjoint under $\langle a,b\rangle_{\widehat W}:=a^\top \widehat W b$.
		By Lemma~\ref{lem:app-projector}(1), $P$ is an orthogonal projector in this inner product.
		Hence $v=Pv+(I-P)v$ with $\widehat W$-orthogonality, so
		$\|v\|_{\widehat W}^2 = \|Pv\|_{\widehat W}^2 + \|(I-P)v\|_{\widehat W}^2 \ge \|Pv\|_{\widehat W}^2$,
		which gives \eqref{eq:app-nonexp}.
	\end{proof}
	
	\section{MAP-Poisedness of the Fallback Set}\label{app:fallback}
	
	This appendix proves the MAP-poisedness guarantee of the fallback set
	$Y^{\mathrm{fb}}=\{x;\ x\pm \Delta e_i\}_{i=1}^n$ used in Subsection~\ref{sec:repair}.
	The proof is fully explicit and uses a symmetry-based eigen-decomposition.
	
	\subsection{Setup}\label{app:fallback-setup}
	
	Fix $x\in\mathbb{R}^n$ and $\Delta>0$. Consider scaled points
	\[
	u^{(0)}=0,\qquad u^{(+i)}=e_i,\qquad u^{(-i)}=-e_i,\qquad i=1,\ldots,n.
	\]
	Let $\widehat A\in\mathbb{R}^{(2n+1)\times q}$ be the scaled design matrix with rows
	$\widehat\phi(u^{(\cdot)})^\top$.
	
	Define the square submatrix $\widetilde A\in\mathbb{R}^{(2n+1)\times(2n+1)}$ obtained by selecting from $\widehat A$ only:
	\begin{itemize}[leftmargin=2em]
		\item the constant column,
		\item the $n$ linear columns $u_i$,
		\item the $n$ diagonal quadratic columns $\frac12 u_i^2$.
	\end{itemize}
	Then
	\begin{equation}\label{eq:app-AAT-psd}
		\widehat A\widehat A^\top
		=
		\widetilde A\widetilde A^\top + A_{\mathrm{extra}}A_{\mathrm{extra}}^\top
		\succeq
		\widetilde A\widetilde A^\top,
	\end{equation}
	hence
	\begin{equation}\label{eq:app-lmin-reduce}
		\lambda_{\min}(\widehat A\widehat A^\top)\ \ge\ \lambda_{\min}(\widetilde A\widetilde A^\top).
	\end{equation}
	
	Let $G:=\widetilde A\widetilde A^\top\in\mathbb{R}^{(2n+1)\times(2n+1)}$.
	We index rows by $\{0,+1,\ldots,+n,-1,\ldots,-n\}$.
	
	\subsection{Explicit Gram matrix entries}\label{app:fallback-entries}
	
	For the selected features, the row vectors are:
	\[
	r_0 = [1;\ 0;\ 0],\qquad
	r_{+i} = [1;\ e_i;\ \tfrac12 e_i],\qquad
	r_{-i} = [1;\ -e_i;\ \tfrac12 e_i].
	\]
	Thus $G_{ab}=r_a^\top r_b$. A direct computation gives:
	\begin{align}
		G_{00} &= 1, \label{eq:app-G00}\\
		G_{0,+i}=G_{0,-i} &= 1, \qquad \forall i, \label{eq:app-G0i}\\
		G_{+i,+i}=G_{-i,-i} &= 1+\|e_i\|^2 + \tfrac14\|e_i\|^2 = \tfrac94, \label{eq:app-Gdiag}\\
		G_{+i,-i} &= 1 + (e_i)^\top(-e_i) + \tfrac14\|e_i\|^2 = \tfrac14, \label{eq:app-Gpair}\\
		G_{+i,+j}=G_{+i,-j}=G_{-i,-j} &= 1,\qquad \forall i\ne j. \label{eq:app-Goff}
	\end{align}
	
	\subsection{Eigen-decomposition by invariant subspaces}\label{app:fallback-eigs}
	
	Define standard basis vectors $e_{0},e_{+i},e_{-i}\in\mathbb{R}^{2n+1}$.
	Introduce the following vectors for $i=1,\ldots,n$:
	\[
	d_i := e_{+i}-e_{-i},
	\qquad
	s_i := e_{+i}+e_{-i},
	\qquad
	S := \sum_{i=1}^n s_i.
	\]
	
	\begin{lemma}[Difference-subspace eigenvalue]\label{lem:app-eig2}
		For each $i=1,\ldots,n$, $G d_i = 2 d_i$. Hence $2$ is an eigenvalue with multiplicity at least $n$.
	\end{lemma}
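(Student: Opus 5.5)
The claim $G d_i = 2d_i$ is a finite check: I would compute each coordinate of $G d_i$ from the explicit Gram entries \eqref{eq:app-G00}--\eqref{eq:app-Goff}. Since $G$ is symmetric, $(Gd_i)_a = G_{a,+i}-G_{a,-i}$ for every row index $a\in\{0,\pm1,\ldots,\pm n\}$. So I only need the difference of the two columns $+i$ and $-i$ of $G$, evaluated row by row.

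I would split the rows into three groups.

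\emph{Row $0$.} Here $G_{0,+i}-G_{0,-i}=1-1=0$ by \eqref{eq:app-G0i}.

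\emph{Rows $\pm j$ with $j\neq i$.} By \eqref{eq:app-Goff} and symmetry, $G_{\pm j,+i}=G_{\pm j,-i}=1$, so these entries vanish as well. Conceptually, the selected rows $r_{+i}$ and $r_{-i}$ differ only in the linear block $\pm e_i$, and that block is orthogonal to the rows $r_0$ and $r_{\pm j}$.

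\emph{Rows $+i$ and $-i$.} Using \eqref{eq:app-Gdiag} and \eqref{eq:app-Gpair}, row $+i$ gives $G_{+i,+i}-G_{+i,-i}=\tfrac94-\tfrac14=2$, and row $-i$ gives $G_{-i,+i}-G_{-i,-i}=\tfrac14-\tfrac94=-2$.

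Together these give $Gd_i = 2e_{+i}-2e_{-i}=2d_i$. A cleaner equivalent route is to note that $\widetilde A^\top d_i = r_{+i}-r_{-i}=[0;\,2e_i;\,0]$. Then $(Gd_i)_a = r_a^\top[0;2e_i;0] = 2(\text{linear part of } r_a)_i$, which equals $2$ for $a=+i$, $-2$ for $a=-i$, and $0$ otherwise. This makes the cancellations transparent, and I would present it this way.

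For the multiplicity, the vectors $d_1,\ldots,d_n$ have pairwise disjoint supports, so they are linearly independent. Hence the eigenspace of $G$ for eigenvalue $2$ has dimension at least $n$.

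There is no real obstacle here. The only point needing care is the off-diagonal symmetry case $G_{-i,+j}$, which \eqref{eq:app-Goff} lists only as $G_{+i,-j}$ but which follows from the symmetry of $G$.
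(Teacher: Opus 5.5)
Your proof is correct and follows the paper's argument: the paper also checks $Gd_i=2d_i$ coordinate by coordinate from the explicit Gram entries, using the same three groups of rows ($0$, $\pm j$ with $j\neq i$, and $\pm i$). Your factored version via $\widetilde A^\top d_i=r_{+i}-r_{-i}=[0;\,2e_i;\,0]$ repackages that same computation more transparently, and your disjoint-support remark supplies the linear independence of $d_1,\ldots,d_n$, which the paper uses implicitly for the multiplicity claim.
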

	\begin{proof}
		Using \eqref{eq:app-G0i}--\eqref{eq:app-Goff},
		\[
		(G d_i)_0 = G_{0,+i}-G_{0,-i}=1-1=0.
		\]
		For any $j\ne i$,
		\[
		(G d_i)_{+j}=G_{+j,+i}-G_{+j,-i}=1-1=0,
		\qquad
		(G d_i)_{-j}=G_{-j,+i}-G_{-j,-i}=1-1=0.
		\]
		For the $+i$ and $-i$ components,
		\[
		(G d_i)_{+i}=G_{+i,+i}-G_{+i,-i}=\tfrac94-\tfrac14=2,
		\qquad
		(G d_i)_{-i}=G_{-i,+i}-G_{-i,-i}=\tfrac14-\tfrac94=-2.
		\]
		Thus $G d_i = 2(e_{+i}-e_{-i})=2 d_i$.
	\end{proof}
	
	\begin{lemma}[Sum-deviation subspace eigenvalue]\label{lem:app-eig-half}
		Let $w=\sum_{i=1}^n \alpha_i s_i$ satisfy $\sum_{i=1}^n \alpha_i=0$. Then $G w=\frac12 w$.
		Hence $\frac12$ is an eigenvalue with multiplicity at least $n-1$.
	\end{lemma}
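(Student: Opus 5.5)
Following the proof of Lemma~\ref{lem:app-eig2}, we compute $Gw$ componentwise from the explicit entries \eqref{eq:app-G00}--\eqref{eq:app-Goff}. We write $w=\sum_i\alpha_i(e_{+i}+e_{-i})$, so $w_0=0$ and $w_{+j}=w_{-j}=\alpha_j$, and we use the constraint $\sum_i\alpha_i=0$ to eliminate all ``all-ones'' contributions.

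\emph{Center component.} We have
\[
(Gw)_0=\sum_i\alpha_i\bigl(G_{0,+i}+G_{0,-i}\bigr)=2\sum_i\alpha_i=0=\tfrac12 w_0 .
\]

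\emph{The $+j$ component.} We split the sum into the $i=j$ term and the $i\neq j$ terms:
\[
(Gw)_{+j}=\alpha_j\bigl(G_{+j,+j}+G_{+j,-j}\bigr)+\sum_{i\ne j}\alpha_i\bigl(G_{+j,+i}+G_{+j,-i}\bigr).
\]
The first bracket is $\tfrac94+\tfrac14=\tfrac52$. Each bracket in the sum equals $2$ by \eqref{eq:app-Goff}, so the sum is $2\sum_{i\ne j}\alpha_i=-2\alpha_j$, using $\sum_i\alpha_i=0$. Hence $(Gw)_{+j}=\tfrac52\alpha_j-2\alpha_j=\tfrac12\alpha_j$.

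\emph{The $-j$ component.} By symmetry $G_{-j,-j}=\tfrac94$, $G_{-j,+j}=\tfrac14$, and $G_{-j,\pm i}=1$ for $i\ne j$. The same computation gives $(Gw)_{-j}=\tfrac12\alpha_j$.

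Together these three cases give $Gw=\tfrac12 w$.

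\emph{Multiplicity.} The map $\alpha\mapsto\sum_i\alpha_i s_i$ is injective, because the $s_i$ have disjoint supports. The hyperplane $\{\alpha:\sum_i\alpha_i=0\}$ has dimension $n-1$, so its image is an $(n-1)$-dimensional eigenspace for the eigenvalue $\tfrac12$.

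The only subtle point is reading off the off-diagonal entries $G_{-j,+i}=1$ for $i\ne j$ correctly, which holds by the symmetry of $G$ and \eqref{eq:app-Goff}. Beyond that the argument is routine bookkeeping.
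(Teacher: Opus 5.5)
Your proof is correct and is essentially the paper's argument: both compute $Gw$ directly from the explicit Gram entries and let $\sum_i\alpha_i=0$ cancel the all-ones contributions. The paper organizes the product by columns, first deriving $Gs_i=2e_0+\tfrac12 s_i+2S$ (an identity it reuses for the block on $\mathrm{span}\{e_0,S\}$), while you work row by row; your remark that the $s_i$ have disjoint supports, so the hyperplane maps injectively, makes explicit a step the paper leaves implicit in the multiplicity count.
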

	\begin{proof}
		First compute $G s_i$.
		From \eqref{eq:app-G0i}, \eqref{eq:app-Gdiag}, \eqref{eq:app-Gpair}, and \eqref{eq:app-Goff}:
		\[
		(G s_i)_0 = G_{0,+i}+G_{0,-i}=1+1=2,
		\]
		\[
		(G s_i)_{+i}=G_{+i,+i}+G_{+i,-i}=\tfrac94+\tfrac14=\tfrac52,
		\qquad
		(G s_i)_{-i}=G_{-i,+i}+G_{-i,-i}=\tfrac14+\tfrac94=\tfrac52,
		\]
		and for $j\ne i$,
		\[
		(G s_i)_{+j}=G_{+j,+i}+G_{+j,-i}=1+1=2,
		\qquad
		(G s_i)_{-j}=G_{-j,+i}+G_{-j,-i}=1+1=2.
		\]
		Therefore,
		\[
		G s_i
		=
		2 e_0 + \frac52 s_i + 2\sum_{j\ne i} s_j
		=
		2 e_0 + \frac52 s_i + 2(S-s_i)
		=
		2 e_0 + \frac12 s_i + 2S.
		\]
		Now take $w=\sum_i \alpha_i s_i$ with $\sum_i\alpha_i=0$. Then
		\begin{align*}
		G w
		&=
		\sum_i \alpha_i G s_i \\
		&=
		\sum_i \alpha_i\big(2e_0 + \tfrac12 s_i + 2S\big) \\
		&=
		2\Big(\sum_i \alpha_i\Big)e_0
		+ \tfrac12\sum_i \alpha_i s_i
		+ 2\Big(\sum_i \alpha_i\Big)S \\
		&=
		\tfrac12 w.
		\end{align*}
		The dimension of $\{(\alpha_i):\sum_i\alpha_i=0\}$ is $n-1$, giving multiplicity at least $n-1$.
	\end{proof}
	
	\begin{lemma}[Two-dimensional invariant block]\label{lem:app-eig2d}
		The subspace $\mathrm{span}\{e_0,S\}$ is $G$-invariant, and the restriction of $G$ to this subspace has matrix
		\[
		M=
		\begin{bmatrix}
			1 & 2n\\
			1 & 2n+\tfrac12
		\end{bmatrix}
		\quad\text{in the basis }\{e_0,S\}.
		\]
		Hence the remaining two eigenvalues are the roots of
		\begin{equation}\label{eq:app-poly}
			2\lambda^2 - (4n+3)\lambda + 1 = 0,
		\end{equation}
		namely
		\begin{equation}\label{eq:app-lpm}
			\lambda_{\pm} = \frac{(4n+3)\pm \sqrt{(4n+3)^2-8}}{4}.
		\end{equation}
	\end{lemma}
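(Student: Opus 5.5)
The plan is a direct computation of $GS$ and $Ge_0$ using the explicit Gram entries \eqref{eq:app-G00}--\eqref{eq:app-Goff}, followed by the characteristic polynomial of the resulting $2\times 2$ block.

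\textbf{Step 1: compute $Ge_0$.} The $e_0$ column of $G$ has entry $G_{00}=1$ and entries $G_{\pm i,0}=1$. Hence
\[
Ge_0=e_0+\sum_i (e_{+i}+e_{-i})=e_0+S.
\]
This gives the first column $(1,1)^\top$ in the basis $\{e_0,S\}$.

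\textbf{Step 2: compute $GS$.} I sum the formula $Gs_i=2e_0+\tfrac12 s_i+2S$ obtained in the proof of Lemma~\ref{lem:app-eig-half} over $i=1,\ldots,n$:
\[
GS=2n\,e_0+\tfrac12 S+2nS=2n\,e_0+\bigl(2n+\tfrac12\bigr)S.
\]
This gives the second column $(2n,\,2n+\tfrac12)^\top$. Both images lie in $\mathrm{span}\{e_0,S\}$, so the subspace is $G$-invariant and the restriction has the stated matrix $M$. Note that $e_0$ and $S$ are linearly independent because they have disjoint supports.

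\textbf{Step 3: characteristic polynomial.} We have $\operatorname{tr}M=2n+\tfrac32$ and $\det M=2n+\tfrac12-2n=\tfrac12$. Thus
\[
\lambda^2-\bigl(2n+\tfrac32\bigr)\lambda+\tfrac12=0,
\]
and multiplying by $2$ gives \eqref{eq:app-poly}. The quadratic formula then yields \eqref{eq:app-lpm}. The discriminant $(4n+3)^2-8$ is positive, so both roots are real and positive.

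\textbf{Step 4: the word ``remaining''.} The eigenvalues of $M$ are eigenvalues of $G$, with eigenvectors $\alpha e_0+\beta S$ built from the eigenvectors of $M$. To justify that these are the remaining two, I check orthogonality against the earlier subspaces:
\begin{itemize}
\item $e_0$ and $S$ are orthogonal to each $d_i$ in the Euclidean inner product, since $\langle S,d_i\rangle=1-1=0$;
\item $e_0$ and $S$ are orthogonal to every $w=\sum_i\alpha_i s_i$ with $\sum_i\alpha_i=0$, since $\langle S,w\rangle=2\sum_i\alpha_i=0$.
\end{itemize}
The three subspaces therefore have dimensions $n$, $n-1$ and $2$, summing to $2n+1$, and they are mutually orthogonal. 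Since $G$ is symmetric, this accounts for the full spectrum.

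\textbf{Main obstacle.} There is no real obstacle; the only care needed is bookkeeping. In $GS$, the $s_i$ coefficient sums to $\tfrac12 S$ plus $2nS$, and this must not be double-counted.
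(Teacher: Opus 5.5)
Your proposal is correct and follows the paper's proof essentially line by line: $Ge_0=e_0+S$ from the first column of $G$, then $GS=2n\,e_0+(2n+\tfrac12)S$ by summing $Gs_i=2e_0+\tfrac12 s_i+2S$, and finally the trace and determinant of $M$. Your Step~4 makes explicit the justification of ``remaining'' that the paper only uses implicitly when it lists the full spectrum in Lemma~\ref{lem:app-lminG}; it rests on orthogonality plus the dimension count $n+(n-1)+2=2n+1$, and the one relation you did not check, $\langle d_i,s_j\rangle=0$, is immediate.
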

	\begin{proof}
		From \eqref{eq:app-G00}--\eqref{eq:app-G0i}, the first column gives
		\[
		G e_0 = e_0 + \sum_{i=1}^n (e_{+i}+e_{-i}) = e_0 + S.
		\]
		Also, summing the identity $G s_i = 2e_0 + \frac12 s_i + 2S$ over $i=1,\ldots,n$ yields
		\[
		G S = \sum_{i=1}^n G s_i = 2n\,e_0 + \frac12 S + 2n\,S = 2n\,e_0 + \left(2n+\frac12\right)S.
		\]
		Thus the restriction to $\mathrm{span}\{e_0,S\}$ has the stated matrix $M$.
		The characteristic polynomial is
		\[
		\det(M-\lambda I)
		=
		(1-\lambda)\left(2n+\tfrac12-\lambda\right) - 2n\cdot 1
		=
		\lambda^2 - \left(2n+\tfrac32\right)\lambda + \tfrac12,
		\]
		and multiplying by $2$ gives \eqref{eq:app-poly}. Solving yields \eqref{eq:app-lpm}.
	\end{proof}
	
	\begin{lemma}[Fallback Gram lower bound]\label{lem:app-lminG}
		Let $\lambda_-:=\lambda_{\min}$ of the two roots in \eqref{eq:app-lpm}. Then
		\begin{equation}\label{eq:app-lminG-bound}
			\lambda_{\min}(G) = \lambda_- \ge \frac{1}{4n+3}.
		\end{equation}
	\end{lemma}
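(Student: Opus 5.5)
The plan is to assemble the full spectrum of $G$ from the three preceding lemmas and then bound the smallest root of \eqref{eq:app-poly} from below. The three invariant subspaces already cover everything. Lemma~\ref{lem:app-eig2} gives $n$ eigenvectors $d_i$ with eigenvalue $2$. Lemma~\ref{lem:app-eig-half} gives an $(n-1)$-dimensional space of eigenvectors $\sum_i\alpha_i s_i$, $\sum_i\alpha_i=0$, with eigenvalue $\tfrac12$. Lemma~\ref{lem:app-eig2d} gives the two-dimensional invariant block $\mathrm{span}\{e_0,S\}$ with eigenvalues $\lambda_\pm$. These subspaces are mutually orthogonal in $\mathbb{R}^{2n+1}$: the $d_i$ are orthogonal to every $s_j$ and to $e_0$; the zero-sum combinations of the $s_i$ are orthogonal to $S$ and to $e_0$. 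Their dimensions add to $n+(n-1)+2=2n+1$, so they span the whole space. Since $G$ is symmetric, these are all its eigenvalues, counted with multiplicity, and $\lambda_{\min}(G)=\min\{2,\tfrac12,\lambda_-,\lambda_+\}$.

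Next I bound $\lambda_-$. By Vieta, $\lambda_+\lambda_-=\tfrac12$ and $\lambda_++\lambda_-=(4n+3)/2$. Both roots are positive, and the discriminant $(4n+3)^2-8$ is positive. Because $\lambda_-\le\lambda_+$, we get $\lambda_+\le(4n+3)/2$, and therefore
\[
\lambda_-=\frac{1}{2\lambda_+}\ge\frac{1}{4n+3}.
\]
Equivalently, one can rationalize \eqref{eq:app-lpm} to $\lambda_-=2/\bigl((4n+3)+\sqrt{(4n+3)^2-8}\bigr)\ge 2/(2(4n+3))$.

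It remains to show that $\lambda_-$ is the minimum over the whole spectrum, which gives the equality in \eqref{eq:app-lminG-bound}. From $2\lambda_-^2-(4n+3)\lambda_-+1=0$ I get $\lambda_-=(1+2\lambda_-^2)/(4n+3)$. Since $\lambda_-\le$ the mean of the two roots and their product is $\tfrac12$, we have $\lambda_-\le 1/\sqrt2$. Then $\lambda_-\le(1+1)/(4n+3)\le 2/7<\tfrac12$ for $n\ge1$. More simply, I can evaluate the quadratic $p(\lambda)=2\lambda^2-(4n+3)\lambda+1$ at $\tfrac12$: this gives $p(\tfrac12)=\tfrac32-2n-\tfrac32\cdot0\ldots$, which equals $-2n+\tfrac12<0$. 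So $\tfrac12$ lies strictly between the roots, hence $\lambda_-<\tfrac12<2$.

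The only step needing care is the completeness and orthogonality bookkeeping that justifies the claim that no eigenvalue has been missed. The degenerate case $n=1$ has an empty $\tfrac12$-subspace and needs a brief note, but the dimension count $1+0+2=3$ still works.
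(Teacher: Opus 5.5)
Your proposal is correct and follows the paper's proof: assemble the spectrum from the three invariant-subspace lemmas, check $\lambda_-<\tfrac12$, and bound $\lambda_-=2/((4n+3)+\sqrt{(4n+3)^2-8})\ge 1/(4n+3)$, which is the same estimate as your Vieta argument. Your orthogonality and dimension count and your explicit check at $\lambda=\tfrac12$ fill in steps the paper only asserts; the one slip is arithmetic, $p(\tfrac12)=\tfrac12-\tfrac{4n+3}{2}+1=-2n$ rather than $-2n+\tfrac12$, and the sign argument is unaffected.
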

	\begin{proof}
		By Lemmas~\ref{lem:app-eig2}, \ref{lem:app-eig-half}, and \ref{lem:app-eig2d}, the spectrum of $G$ is
		\[
		\{2\ \text{(mult. }n),\ \tfrac12\ \text{(mult. }n-1),\ \lambda_+,\ \lambda_-\}.
		\]
		For $n\ge 1$, one checks $\lambda_-<\tfrac12$, hence $\lambda_{\min}(G)=\lambda_-$.
		From \eqref{eq:app-lpm},
		\[
		\lambda_-=\frac{(4n+3)-\sqrt{(4n+3)^2-8}}{4}
		=
		\frac{2}{(4n+3)+\sqrt{(4n+3)^2-8}}.
		\]
		Since $\sqrt{(4n+3)^2-8}\le (4n+3)$, the denominator is at most $2(4n+3)$, hence
		\[
		\lambda_- \ge \frac{2}{2(4n+3)} = \frac{1}{4n+3}.
		\]
	\end{proof}
	
	\subsection{Fallback MAP-poisedness under precision bounds}\label{app:fallback-map}
	
	Suppose the precision satisfies $\widehat W\preceq w_{\max} I$, hence $\widehat W^{-1}\succeq \frac{1}{w_{\max}}I$.
	Then for hard-MAP,
	\[
	\widehat M^{\mathrm{hard}} = \widehat A\widehat W^{-1}\widehat A^\top
	\succeq
	\frac{1}{w_{\max}}\widehat A\widehat A^\top.
	\]
	Taking minimum eigenvalues and applying \eqref{eq:app-lmin-reduce} and Lemma~\ref{lem:app-lminG}:
	\begin{equation}\label{eq:app-mu0-final}
		\lambda_{\min}\!\left(\widehat M^{\mathrm{hard}}\right)
		\ge
		\frac{1}{w_{\max}}\lambda_{\min}(\widehat A\widehat A^\top)
		\ge
		\frac{1}{w_{\max}}\lambda_{\min}(G)
		\ge
		\frac{1}{w_{\max}}\cdot \frac{1}{4n+3}.
	\end{equation}
	This is exactly the fallback MAP-poisedness bound used in Lemma~\ref{lem:conv-fallback}.
	
	\section{Soft-MAP Completion with Exact Center Interpolation}\label{app:softmap}
	
	This appendix derives the closed form for the constrained ridge (soft-MAP) problem that enforces $m_k(0)=f(x_k)$ exactly,
	as stated in \eqref{eq:app-softmap-prob}.
	
	\subsection{Problem statement}\label{app:softmap-prob}
	
	Fix iteration $k$ and suppress indices. Let the center row be $a_0^\top:=\widehat\phi(0)^\top$.
	Let $\widehat A_{\mathrm{nc}}\in\mathbb{R}^{m\times q}$ and $b_{\mathrm{nc}}\in\mathbb{R}^m$ denote the non-center rows/values.
	Let $R_{\mathrm{nc}}\succ 0$ be the observation covariance on these $m$ points, and $\widehat W\succ 0$ be the precision matrix.
	Given prior mean $\widehat c^\pi$, the constrained soft-MAP problem is:
	\begin{equation}\label{eq:app-softmap-prob}
		\min_{\widehat c\in\mathbb{R}^q}\ 
		\frac12\|\widehat A_{\mathrm{nc}}\widehat c-b_{\mathrm{nc}}\|_{R_{\mathrm{nc}}^{-1}}^2
		+\frac12\|\widehat c-\widehat c^\pi\|_{\widehat W}^2
		\quad \text{s.t.}\quad
		a_0^\top \widehat c = f(x_k).
	\end{equation}
	
	\subsection{Closed form via KKT and Schur complement}\label{app:softmap-kkt}
	
	Define
	\begin{equation}\label{eq:app-soft-K}
		K := \widehat A_{\mathrm{nc}}^\top R_{\mathrm{nc}}^{-1}\widehat A_{\mathrm{nc}} + \widehat W
		\quad\in\mathbb{R}^{q\times q},
		\qquad
		r := \widehat A_{\mathrm{nc}}^\top R_{\mathrm{nc}}^{-1} b_{\mathrm{nc}} + \widehat W \widehat c^\pi.
	\end{equation}
	Then $K\succ 0$ because $\widehat W\succ 0$ and $\widehat A_{\mathrm{nc}}^\top R_{\mathrm{nc}}^{-1}\widehat A_{\mathrm{nc}}\succeq 0$.
	
	The Lagrangian with multiplier $\nu\in\mathbb{R}$ is
	\[
	\mathcal{L}(\widehat c,\nu)
	=
	\frac12(\widehat A_{\mathrm{nc}}\widehat c-b_{\mathrm{nc}})^\top R_{\mathrm{nc}}^{-1}(\widehat A_{\mathrm{nc}}\widehat c-b_{\mathrm{nc}})
	+\frac12(\widehat c-\widehat c^\pi)^\top \widehat W(\widehat c-\widehat c^\pi)
	+\nu(a_0^\top \widehat c - f(x_k)).
	\]
	KKT conditions are
	\begin{align}
		\nabla_{\widehat c}\mathcal{L}
		&=
		\widehat A_{\mathrm{nc}}^\top R_{\mathrm{nc}}^{-1}(\widehat A_{\mathrm{nc}}\widehat c-b_{\mathrm{nc}})
		+\widehat W(\widehat c-\widehat c^\pi)
		+\nu a_0
		= 0, \label{eq:app-soft-kkt1}\\
		a_0^\top \widehat c &= f(x_k). \label{eq:app-soft-kkt2}
	\end{align}
	Rearrange \eqref{eq:app-soft-kkt1} to obtain
	\begin{equation}\label{eq:app-soft-linear}
		K\widehat c + \nu a_0 = r,
		\qquad\text{so}\qquad
		\widehat c = K^{-1}(r-\nu a_0).
	\end{equation}
	Plugging into \eqref{eq:app-soft-kkt2} yields a scalar equation for $\nu$:
	\[
	a_0^\top K^{-1}(r-\nu a_0) = f(x_k)
	\quad\Longrightarrow\quad
	\nu = \frac{a_0^\top K^{-1} r - f(x_k)}{a_0^\top K^{-1} a_0}.
	\]
	Therefore, the closed form solution is
	\begin{equation}\label{eq:app-soft-closed}
		\widehat c
		=
		K^{-1}\left(r - a_0\cdot
		\frac{a_0^\top K^{-1} r - f(x_k)}{a_0^\top K^{-1} a_0}\right),
	\end{equation}
	which is unique since $K\succ 0$ implies $a_0^\top K^{-1}a_0>0$.
	
	\section{Sufficient Conditions for Prior Accuracy}\label{app:gp-to-A6}
	
This appendix provides a bridge from surrogate accuracy at
the center to the prior-accuracy condition in
Assumption~\ref{asmp:conv-prior}, where the error is measured in the $\widehat W_k$-norm.
Any probabilistic justification for a GP or another surrogate may then
be layered on top of this lemma without altering the
trust-region analysis developed in the main text.

\subsection{A prior-accuracy bridge}\label{app:gp-bridge}

Suppose the prior mean is chosen as in \eqref{eq:bup-prior-gp}, with the exact center value and derivative posterior moments
of a surrogate mean $\mu_k(\cdot)$ at $x_k$:
\begin{equation}\label{eq:app-prior-taylor}
	\widehat c_k^\pi
	:=
	\begin{bmatrix}
		f(x_k)\\
		\Delta_k \nabla \mu_k(x_k)\\
		\Delta_k^2\,\mathrm{vech}\!\big(\nabla^2 \mu_k(x_k)\big)
	\end{bmatrix}.
\end{equation}
Recall $\widehat c_k^\ell=[f(x_k);\ \Delta_k\nabla f(x_k);\ 0]$ and the
block structure $\widehat W_k=\mathrm{diag}(w_0,W_{g,k},W_{H,k})$ from \eqref{eq:bup-W-block}.
The precision matrix is constructed by clipping the diagonal precision entries to a fixed interval
(the explicit formula is given in~\eqref{eq:bup-W-gp} below).

\begin{lemma}[A sufficient condition for prior accuracy]\label{lem:app-A6-sufficient}
	Suppose there exist constants $a_1,a_2\ge 0$ such that for all $k$,
	\begin{equation}\label{eq:app-sur-err}
		\|\nabla \mu_k(x_k)-\nabla f(x_k)\|_2 \le a_1 \Delta_k,\qquad
		\|\nabla^2 \mu_k(x_k)\|_F \le a_2.
	\end{equation}
	Define
	$\bar w_g:=\sup_k \|W_{g,k}\|_\infty$ and
	$\bar w_H:=\sup_k \|W_{H,k}\|_\infty$
	(both bounded by $w_{\max}$ via \eqref{eq:bup-W-gp}).
	Then Assumption~\ref{asmp:conv-prior} holds with
	\begin{equation}\label{eq:app-kpi}
		\bar\kappa_\pi = \sqrt{\bar w_g\,a_1^2 + \bar w_H\,a_2^2}.
	\end{equation}
\end{lemma}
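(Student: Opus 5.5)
The proof will be a direct blockwise computation of the weighted norm in Assumption~\ref{asmp:conv-prior}. We form the difference $v:=\widehat c_k^\pi-\widehat c_k^\ell$ from \eqref{eq:app-prior-taylor} and the definition of $\widehat c_k^\ell$. Its constant block vanishes, because both vectors carry $f(x_k)$ there. Its gradient block is $v^{(g)}=\Delta_k(\nabla\mu_k(x_k)-\nabla f(x_k))$, and its Hessian block is $v^{(H)}=\Delta_k^2\,\mathrm{vech}(\nabla^2\mu_k(x_k))$.

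Since $\widehat W_k=\mathrm{diag}(w_0,W_{g,k},W_{H,k})$ is block diagonal, with diagonal blocks as in \eqref{eq:bup-W-block}, the squared norm splits as
\[
\|v\|_{\widehat W_k}^2 = (v^{(g)})^\top W_{g,k}v^{(g)} + (v^{(H)})^\top W_{H,k}v^{(H)}.
\]
The constant block contributes nothing. Each diagonal block is bounded by its largest entry, so $(v^{(g)})^\top W_{g,k}v^{(g)}\le \bar w_g\|v^{(g)}\|_2^2$, and similarly for the Hessian block with $\bar w_H$.

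Next we insert the surrogate bounds \eqref{eq:app-sur-err}. For the gradient block, $\|v^{(g)}\|_2^2\le \Delta_k^2 a_1^2\Delta_k^2=a_1^2\Delta_k^4$. For the Hessian block we need $\|\mathrm{vech}(H)\|_2\le\|H\|_F$ for symmetric $H$. This holds because $\|H\|_F^2=\sum_i H_{ii}^2+2\sum_{i<j}H_{ij}^2\ge\|\mathrm{vech}(H)\|_2^2$; it is the reverse-direction companion of \eqref{eq:app-vech-to-op}. It gives $\|v^{(H)}\|_2^2\le \Delta_k^4 a_2^2$.

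Combining the two blocks yields $\|v\|_{\widehat W_k}^2\le(\bar w_g a_1^2+\bar w_H a_2^2)\Delta_k^4$. Taking square roots gives \eqref{eq:conv-prior-acc} with $\bar\kappa_\pi$ as in \eqref{eq:app-kpi}.

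There is no real obstacle here. The only points needing care are the direction of the vech/Frobenius inequality, since the paper recorded the opposite direction with the factor $\sqrt2$, and the observation that $\bar w_g,\bar w_H\le w_{\max}<\infty$ because of clipping. The latter ensures the constant is finite and independent of $k$.
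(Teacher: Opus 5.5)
Your proposal is correct and follows essentially the same argument as the paper: it expands the diagonal weighted norm block by block, bounds each diagonal entry by $\bar w_g$ or $\bar w_H$, and applies $\|\mathrm{vech}(S)\|_2\le\|S\|_F$ to the Hessian block before inserting the surrogate bounds. Your explicit check of the direction of the vech/Frobenius inequality and of the finiteness of $\bar w_g,\bar w_H$ via clipping are small additions the paper leaves implicit.
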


\begin{proof}
	From \eqref{eq:app-prior-taylor}, the constant block cancels:
	\[
	d_k:=\widehat c_k^\pi-\widehat c_k^\ell
	=
	\begin{bmatrix}
		0\\
		\Delta_k(\nabla \mu_k(x_k)-\nabla f(x_k))\\
		\Delta_k^2\,\mathrm{vech}(\nabla^2 \mu_k(x_k))
	\end{bmatrix}.
	\]
	Since $\widehat W_k$ is diagonal with $w_0$ on the constant entry,
	$W_{g,k}$ on the gradient block, and $W_{H,k}$ on the Hessian block:
	\begin{align*}
		\|d_k\|_{\widehat W_k}^2
		&=
		w_0\cdot 0^2
		+\sum_{i=1}^{n}[W_{g,k}]_{ii}\,\Delta_k^2|\nabla_i \mu_k-\nabla_i f|^2
		+ \sum_{j}[W_{H,k}]_{jj}\,\Delta_k^4|[\mathrm{vech}(\nabla^2\mu_k)]_j|^2\\
		&\le
		\bar w_g\,\Delta_k^2\|\nabla \mu_k-\nabla f\|_2^2
		+ \bar w_H\,\Delta_k^4\|\mathrm{vech}(\nabla^2 \mu_k)\|_2^2\\
		&\le
		\bar w_g\,a_1^2\,\Delta_k^4
		+ \bar w_H\,\Delta_k^4\|\nabla^2 \mu_k\|_F^2
		\qquad(\text{since }\|\mathrm{vech}(S)\|_2\le \|S\|_F)\\
		&\le
		(\bar w_g\,a_1^2 + \bar w_H\,a_2^2)\,\Delta_k^4.
	\end{align*}
	Taking square roots yields \eqref{eq:app-kpi}.
\end{proof}

\begin{remark}[Role of the precision metric]
	Because $\bar w_H$ can be much smaller than $w_{\max}$ when the GP
	Hessian posterior is uncertain (diagonal entries of $W_{H,k}$ near
	$w_{\min}$), the constant $\bar\kappa_\pi$ is generically
	tighter than $\sqrt{w_{\max}}\sqrt{a_1^2+a_2^2}$, which would
	result from a Euclidean-norm assumption under identical
	surrogate accuracy.
	Thus coefficient blocks assigned low precision may be less accurate without inflating the constant as much as they would under a Euclidean-norm requirement.
\end{remark}

Consequently, whenever the surrogate estimates satisfy \eqref{eq:app-sur-err},
either deterministically or with high probability,
Assumption~\ref{asmp:conv-prior} follows from Lemma~\ref{lem:app-A6-sufficient}.
The convergence argument in Section~\ref{sec:conv} then applies without
modification.

\section{Gaussian-Process Prior Construction}\label{app:gp-details}

This appendix provides the complete construction of the GP-based
prior mean~$\widehat c_k^\pi$ and precision~$\widehat W_k$
summarized in Section~\ref{subsec:bup-prior-mean}.

\subsection*{GP posterior at the trust-region center}
Let $\mu_k(\cdot)$ denote the GP posterior mean fitted to the dataset
\[
\mathcal{D}_k :=
\{(x,f(x)):\ x \text{ has been evaluated up to iteration } k\},
\]
subsampled to the $N_{\mathrm{pool}}$ points nearest to~$x_k$.
We compute
\[
\mu_{g,k}:=\nabla \mu_k(x_k)\in\R^n,\qquad
\mu_{H,k}:=\nabla^2 \mu_k(x_k)\in\mathbb{S}^n,
\]
together with the posterior covariance of the stacked derivative vector
\[
z_k := \begin{bmatrix}\nabla \tilde f(x_k)\\ \mathrm{vech}(\nabla^2 \tilde f(x_k))\end{bmatrix}\in\R^{n+q_H},
\qquad
\Sigma_{z,k} := \mathrm{Cov}(z_k\mid \mathcal{D}_k)\in\R^{(n+q_H)\times(n+q_H)},
\]
where $\tilde f$ denotes the latent GP function under the posterior.
For a sufficiently smooth kernel (e.g., Mat\'ern with $\nu\ge 3$ or
squared-exponential), all moments are available in closed form.
Hyperparameters are re-estimated every $K_{\mathrm{hp}}=\max(20,2n)$
iterations via marginal log-likelihood.

Normalization to the coefficient scale.
The scaling matrix
\[
D_k := \mathrm{diag}(\Delta_k I_n,\, \Delta_k^2 I_{q_H})
\]
maps derivative moments to the scaled coefficient basis.
The prior mean~\eqref{eq:bup-prior-gp} then reads
\[
\widehat c_k^\pi =
\bigl[f(x_k);\, D_k[\mu_{g,k};\mathrm{vech}(\mu_{H,k})]\bigr].
\]

Diagonal precision from posterior variance.
Define the scaled covariance
$\Sigma_{\widehat z,k} := D_k\,\Sigma_{z,k}\,D_k$
and let $v=[v_g;v_H]$ collect its diagonal entries:
\[
(v_g)_i = \Delta_k^2\,\mathrm{Var}[\partial_i \tilde f(x_k)\mid \mathcal{D}_k],
\qquad
(v_H)_j = \Delta_k^4\,\mathrm{Var}[(\mathrm{vech}(\nabla^2 \tilde f(x_k)))_j\mid \mathcal{D}_k].
\]
We set
\begin{equation}\label{eq:bup-W-gp}
\widehat W_k
:=
\mathrm{diag}\!\big(
w_0,\;
\clip_{[w_{\min},w_{\max}]}(v_g^{-1}),\;
\clip_{[w_{\min},w_{\max}]}(v_H^{-1})
\big),
\end{equation}
where $v_g^{-1},v_H^{-1}$ denote entry-wise reciprocals
(with $1/0:=+\infty$, clipped to $w_{\max}$),
and $w_0=w_{\max}$.
Higher posterior uncertainty yields smaller precision (weaker pull
toward the prior mean).
When cross-covariance structure matters, one may replace
\eqref{eq:bup-W-gp} with the full spectral clipping of
$\Sigma_{\widehat z,k}^{-1}$; eigenvalue clipping keeps all
eigenvalues in $[w_{\min},w_{\max}]$, so~\eqref{eq:bup-W-bounds}
holds and the theory applies unchanged.

Computational cost.
The dominant per-iteration costs are $\mathcal{O}(N_{\mathrm{pool}}^3)$ for
the GP Cholesky factorization and $\mathcal{O}(n^2 N_{\mathrm{pool}}^2)$ for
derivative posterior moments; the MAP system adds $\mathcal{O}(m^3)$
with $m=2n$.
For $n\le 20$ and $N_{\mathrm{pool}}\lesssim 200$, the per-iteration
overhead is usually secondary relative to an expensive function evaluation.
Beyond $n\approx 20$, the $\mathcal{O}(n^4)$ Hessian-block covariance
becomes the bottleneck; mitigations include restricting the precision
to the gradient block ($W_{H,k}=w_{\min}I$) or sparse GP
approximations.

\section{Proofs of Convergence Results}\label{app:proofs}

This appendix collects the complete proofs deferred from
Section~\ref{sec:conv}.

\subsection{Proof of Lemma~\ref{lem:conv-FL-BUP}}\label{app:proof-FL}

\begin{proof}
	Fix $k$ and write $\Delta:=\Delta_k$, $\widehat A:=\widehat A_k$,
	$\widehat W:=\widehat W_k$, and
	$\widehat M:=\widehat A\widehat W^{-1}\widehat A^\top$.
	Let $\widehat c^\pi:=\widehat c_k^\pi$, $\widehat c:=\widehat c_k$, and $\widehat c^\ell:=\widehat c_k^\ell$.
	
	Step 1: interpolation residual for the linear Taylor model.
	Let $b\in\R^{m+1}$ collect the sampled values, and define the residual vector
	\[
	r:=b-\widehat A\widehat c^\ell.
	\]
	Componentwise, $r_i=f(x_k+s^{(i)})-\ell_k(s^{(i)})$. By Lipschitz gradient (Assumption~\ref{asmp:conv-smooth})
	and $\|s^{(i)}\|_2\le\ctrim\Delta$,
	\[
	|r_i|\le \frac{L_g}{2}\|s^{(i)}\|_2^2 \le \frac{L_g}{2}\ctrim^2\Delta^2,
	\qquad
	\|r\|_2\le \sqrt{m+1}\,\frac{L_g}{2}\ctrim^2\Delta^2.
	\]
	
	Step 2: coefficient error bound.
	Using the closed form \eqref{eq:bup-hard-closed}, and adding and
	subtracting $\widehat A\widehat c^\ell$, gives
	\begin{equation}\label{eq:conv-e-decomp}
		e:=\widehat c-\widehat c^\ell
		=
		P(\widehat c^\pi-\widehat c^\ell)+\widehat W^{-1}\widehat A^\top \widehat M^{-1}r,
		\qquad
		P:=I-\widehat W^{-1}\widehat A^\top \widehat M^{-1}\widehat A.
	\end{equation}
	By Lemma~\ref{lem:app-projector} (Appendix~\ref{app:hardmap}),
	$P$ is the $\widehat W$-orthogonal projector onto
	$\mathrm{Null}(\widehat A)$, and Lemma~\ref{lem:app-nonexp} gives
	$\|Pv\|_{\widehat W}\le \|v\|_{\widehat W}$ for all $v$.
	Moreover, $w_{\min}I\preceq \widehat W$
	(Assumption~\ref{asmp:conv-Wbounds}) implies
	$\|Pv\|_2 \le w_{\min}^{-1/2}\|Pv\|_{\widehat W}$.
	Applying these bounds to
	$v=\widehat c^\pi-\widehat c^\ell$ and then using
	Assumption~\ref{asmp:conv-prior} yields
	\[
	\|P(\widehat c^\pi-\widehat c^\ell)\|_2
	\le \frac{1}{\sqrt{w_{\min}}}
	\|P(\widehat c^\pi-\widehat c^\ell)\|_{\widehat W}
	\le \frac{1}{\sqrt{w_{\min}}}
	\|\widehat c^\pi-\widehat c^\ell\|_{\widehat W}
	\le \frac{\bar\kappa_\pi}{\sqrt{w_{\min}}}\,\Delta^2.
	\]
	For the second term in \eqref{eq:conv-e-decomp}, use
	$\|\widehat W^{-1}\|_2\le 1/w_{\min}$, $\|\widehat M^{-1}\|_2\le 1/\mu_M$ (Assumption~\ref{asmp:conv-mappoised}), and
	$\|\widehat A\|_2\le \sqrt{m+1}\Bgeo$ from \eqref{eq:conv-Bphi-geo} to obtain
	\[
	\|\widehat W^{-1}\widehat A^\top \widehat M^{-1}r\|_2
	\le \frac{1}{w_{\min}}\cdot \|\widehat A\|_2\cdot \frac{1}{\mu_M}\cdot \|r\|_2
	\le \frac{\ctrim^2\,\Bgeo\, L_g(m+1)}{2w_{\min}\mu_M}\,\Delta^2.
	\]
	Combining the two bounds gives $\|e\|_2\le \kappa_e\Delta^2$ with $\kappa_e$ in \eqref{eq:conv-ke}.
	
	Step 3: Hessian bound.
	The Hessian block of $\widehat c^\ell$ is zero. Let $e^{(H)}$ be the Hessian block of $e$.
	Mapping back by \eqref{eq:bup-mapback} gives $\mathrm{vech}(H_k)=\Delta^{-2}e^{(H)}$.
	Using $\|H\|_2\le \|H\|_F\le \sqrt{2}\|\mathrm{vech}(H)\|_2$
	(Appendix~\ref{app:norm}) and $\|e\|_2\le\kappa_e\Delta^2$,
	\[
	\|H_k\|_2
	\le \sqrt{2}\,\Delta^{-2}\|e^{(H)}\|_2
	\le \sqrt{2}\,\Delta^{-2}\|e\|_2
	\le \sqrt{2}\kappa_e.
	\]
	
	Step 4: function and gradient errors.
	We evaluate errors for $\|s\|_2\le\Delta$, equivalently
	$\|u\|_2\le 1$ with $s=\Delta u$.
	The unit-ball feature bound $B_\phi$ applies here, whereas the
	geo-ball bound $\Bgeo$ was used earlier for rows of the design
	matrix.
	Let $s=\Delta u$ with $\|u\|_2\le 1$. Then
	\[
	|f(x_k+s)-m_k(s)|
	\le |f(x_k+s)-\ell_k(s)| + |\ell_k(s)-m_k(s)|.
	\]
	The first term is bounded by $(L_g/2)\Delta^2$ by Assumption~\ref{asmp:conv-smooth}. For the second term,
	$m_k(\Delta u)-\ell_k(\Delta u)=\widehat\phi(u)^\top e$, hence
	$|\ell_k(s)-m_k(s)|\le \|\widehat\phi(u)\|_2\|e\|_2 \le B_\phi \kappa_e\Delta^2$.
	This proves \eqref{eq:conv-FL1} with $\kappa_f$ in \eqref{eq:conv-kappas}.
	
	For the gradient bound, for $\|s\|_2\le \Delta$,
	\[
	\|\nabla f(x_k+s)-\nabla m_k(s)\|_2
	\le \|\nabla f(x_k+s)-\nabla f(x_k)\|_2 + \|\nabla f(x_k)-g_k\|_2 + \|H_k s\|_2.
	\]
	The first term is $\le L_g\Delta$. The second term equals
	$\|\nabla f(x_k)-g_k\|_2=\Delta^{-1}\|e^{(g)}\|_2\le \Delta^{-1}\|e\|_2\le \kappa_e\Delta$.
	The third term is $\le \|H_k\|_2\Delta \le (\sqrt{2}\kappa_e)\Delta$.
	This proves \eqref{eq:conv-FL2} with $\kappa_g$ in \eqref{eq:conv-kappas}.
\end{proof}

\subsection{Proofs of Lemmas~\ref{lem:conv-aredpred}--\ref{lem:conv-crit-exit}}\label{app:proof-accept}

\begin{proof}[Proof of Lemma~\ref{lem:conv-aredpred}]
Since $m_k(0)=f(x_k)$, we have $\ared_k-\pred_k = m_k(s_k)-f(x_k+s_k)$, and \eqref{eq:conv-aredpred} follows from
\eqref{eq:conv-FL1}.
\end{proof}

\begin{proof}[Proof of Lemma~\ref{lem:conv-succ-small}]
By Lemma~\ref{lem:conv-aredpred}, $\ared_k\ge \pred_k-\kappa_f\Delta_k^2$, hence
$\rho_k\ge 1-\kappa_f\Delta_k^2/\pred_k$. Condition \eqref{eq:conv-delta-thr} yields $\rho_k\ge \eta_1$.
\end{proof}

\begin{proof}[Proof of Lemma~\ref{lem:conv-succ-eps}]
From \eqref{eq:conv-FL2} at $s=0$, $\|g_k-\nabla f(x_k)\|_2\le \kappa_g\Delta_k\le \varepsilon/2$, hence $\|g_k\|_2\ge \varepsilon/2$.
By Assumption~\ref{asmp:conv-cauchy} and $\Delta_k\le \varepsilon/(2H_{\max})$,
\[
\pred_k
\ge \frac12\|g_k\|_2\min\!\left\{\Delta_k,\frac{\|g_k\|_2}{\|H_k\|_2}\right\}
\ge \frac12\cdot\frac{\varepsilon}{2}\cdot \Delta_k
= \frac{\varepsilon}{4}\Delta_k.
\]
Lemma~\ref{lem:conv-aredpred} gives $\rho_k\ge 1-\kappa_f\Delta_k^2/\pred_k \ge 1-(4\kappa_f\Delta_k/\varepsilon)$.
If $\Delta_k\le \frac{1-\eta_1}{4\kappa_f}\varepsilon$, then $\rho_k\ge \eta_1$.
\end{proof}

\begin{proof}[Proof of Lemma~\ref{lem:conv-crit-exit}]
From \eqref{eq:conv-FL2} at $s=0$, $\|g_k-\nabla f(x_k)\|_2\le \kappa_g\Delta_k$.
Since $\Delta_k\le \varepsilon/(4\kappa_g)$, we have $\kappa_g\Delta_k\le \varepsilon/4$, and therefore
$\|g_k\|_2\ge \|\nabla f(x_k)\|_2-\kappa_g\Delta_k\ge \varepsilon-\varepsilon/4=3\varepsilon/4$.
Since $\Delta_k\le \varepsilon/(2\kappa_\Delta)$, we obtain $\kappa_\Delta\Delta_k\le \varepsilon/2 < 3\varepsilon/4 \le \|g_k\|_2$.
\end{proof}

\subsection{Proof of Theorem~\ref{thm:conv-global}}\label{app:proof-global}

\begin{proof}
	For contradiction, suppose that there exist $\varepsilon\in(0,1]$ and $k_0$ such that $\|\nabla f(x_k)\|_2\ge \varepsilon$ for all $k\ge k_0$.
	Define the combined threshold
	\[
	\bar\Delta_\varepsilon
	:= \min\!\big\{\Delta_{\mathrm{succ}}(\varepsilon),\,\Delta_{\mathrm{crit}}(\varepsilon)\big\}
	= \min\!\left\{\frac{\varepsilon}{4\kappa_g},\,\frac{\varepsilon}{2H_{\max}},\,\frac{1-\eta_1}{4\kappa_f}\varepsilon,\,
	  \frac{\varepsilon}{2\kappa_\Delta},\,1\right\},
	\]
	where $\Delta_{\mathrm{succ}}(\varepsilon)$ is from Lemma~\ref{lem:conv-succ-eps} and $\Delta_{\mathrm{crit}}(\varepsilon)$ from
	Lemma~\ref{lem:conv-crit-exit}.
	
	Define $\underline\Delta_\varepsilon:=\min\{\Delta_{k_0},\gamma_{\mathrm{dec}}\bar\Delta_\varepsilon\}$. We claim $\Delta_k\ge \underline\Delta_\varepsilon$ for all $k\ge k_0$.
	To see this, suppose $\Delta_k\le \bar\Delta_\varepsilon$ at some $k\ge k_0$. Then:
	\begin{enumerate}[leftmargin=2em,topsep=2pt,itemsep=1pt]
	\item By Lemma~\ref{lem:conv-crit-exit} (since $\|\nabla f(x_k)\|_2\ge\varepsilon$ and $\Delta_k\le\Delta_{\mathrm{crit}}(\varepsilon)$),
	the criticality loop exits immediately without further reducing $\Delta_k$.
	\item By Lemma~\ref{lem:conv-succ-eps} (since $\Delta_k\le\Delta_{\mathrm{succ}}(\varepsilon)$), the trust-region step is successful,
	so $\Delta_{k+1}\ge\Delta_k$.
	\end{enumerate}
	Conversely, if $\Delta_k>\bar\Delta_\varepsilon$ and the iteration is unsuccessful, then
	$\Delta_{k+1}=\gamma_{\mathrm{dec}}\Delta_k>\gamma_{\mathrm{dec}}\bar\Delta_\varepsilon\ge \underline\Delta_\varepsilon$.
	In all cases $\Delta_{k+1}\ge\underline\Delta_\varepsilon$; induction on $k$ proves the stated lower bound.
	
	By the criticality safeguard (Assumption~\ref{asmp:conv-crit}), every iteration that proceeds to compute a step satisfies
	$\|g_k\|_2>\kappa_\Delta\Delta_k$. Together with Assumption~\ref{asmp:conv-cauchy} and $\|H_k\|_2\le H_{\max}$,
	\[
	\pred_k
	\ge \frac12\|g_k\|_2 \min\!\left\{\Delta_k,\frac{\|g_k\|_2}{\|H_k\|_2}\right\}
	\ge \frac12\kappa_\Delta\Delta_k \min\!\left\{\Delta_k,\frac{\kappa_\Delta\Delta_k}{H_{\max}}\right\}
	= c_{\mathrm{pred}}\Delta_k^2,
	\]
	where $c_{\mathrm{pred}}:=\frac12\kappa_\Delta\min\{1,\kappa_\Delta/H_{\max}\}>0$.
	On successful iterations, $\ared_k\ge \eta_1\pred_k$, hence
	$f(x_{k+1})\le f(x_k)-\eta_1 c_{\mathrm{pred}}\Delta_k^2\le f(x_k)-\eta_1 c_{\mathrm{pred}}\underline\Delta_\varepsilon^{\,2}$.
	
	Finally, there must be infinitely many successful iterations. Otherwise, after the last successful iteration, the iterates
	would remain at a fixed point while $\Delta_k$ decreases geometrically toward $0$. Once
	$\Delta_k\le \bar\Delta_\varepsilon$, Lemma~\ref{lem:conv-crit-exit} prevents further criticality shrinkage
	and Lemma~\ref{lem:conv-succ-eps} forces a successful step,
	contradicting the assumption. Therefore, $f$ decreases by a fixed positive amount
	infinitely often, contradicting the lower bound $f\ge f_{\inf}$.
\end{proof}

\subsection{Proof of Theorem~\ref{thm:conv-complexity}}\label{app:proof-complexity}

\begin{proof}
	Fix $\varepsilon\in(0,1]$.  By Theorem~\ref{thm:conv-global}, such indices exist for every $\varepsilon\in(0,1]$; let $K$ be the first index such that $\|\nabla f(x_K)\|_2\le \varepsilon$.
	For all $k<K$, we have $\|\nabla f(x_k)\|_2>\varepsilon$.
	
As in the proof of Theorem~\ref{thm:conv-global}, define
\[
\bar\Delta_\varepsilon:=\min\!\big\{\Delta_{\mathrm{succ}}(\varepsilon),\,\Delta_{\mathrm{crit}}(\varepsilon)\big\},
\qquad
\underline\Delta_\varepsilon:=\min\!\big\{\Delta_0,\,\gamma_{\mathrm{dec}}\bar\Delta_\varepsilon\big\}.
\]
By the same argument (using Lemmas~\ref{lem:conv-succ-eps} and \ref{lem:conv-crit-exit}),
$\Delta_k\ge \underline\Delta_\varepsilon$ for all $k<K$.
	
	Let $\mathcal{S}$ denote the set of successful indices in $\{0,1,\ldots,K-1\}$ and set $N_s:=|\mathcal{S}|$.
	By Lemma~\ref{lem:conv-FL-BUP}, $\|H_k\|_2\le H_{\max}$ uniformly. By the criticality safeguard,
	$\|g_k\|_2>\kappa_\Delta\Delta_k$ on iterations that compute a step. Therefore, for $k\in\mathcal{S}$,
	\[
	f(x_k)-f(x_{k+1})=\ared_k \ge \eta_1\pred_k \ge \eta_1 c_{\mathrm{pred}}\Delta_k^2
	\ge \eta_1 c_{\mathrm{pred}}\,\underline\Delta_\varepsilon^{\,2},
	\]
	where $c_{\mathrm{pred}}:=\frac12\kappa_\Delta\min\{1,\kappa_\Delta/H_{\max}\}>0$.
	Summing over $k\in\mathcal{S}$ and using $f\ge f_{\inf}$ yields
	\[
	f(x_0)-f_{\inf} \ge N_s\,\eta_1 c_{\mathrm{pred}}\,\underline\Delta_\varepsilon^{\,2}.
	\]
	
	Since $\bar\Delta_\varepsilon\ge c_\Delta\,\varepsilon$ with
	\[
	c_\Delta:=\min\left\{\frac{1}{4\kappa_g},\ \frac{1}{2H_{\max}},\ \frac{1-\eta_1}{4\kappa_f},\ \frac{1}{2\kappa_\Delta},\ 1\right\},
	\]
	we have
	\[
	\underline\Delta_\varepsilon
	=\min\{\Delta_0,\gamma_{\mathrm{dec}}\bar\Delta_\varepsilon\}
	\ \ge\ \min\{\Delta_0,\gamma_{\mathrm{dec}}c_\Delta\,\varepsilon\}.
	\]
	For $\varepsilon\in(0,1]$, this implies the uniform bound
	\[
	\frac{1}{\underline\Delta_\varepsilon^{\,2}}
	\ \le\
	\max\left\{\frac{1}{\Delta_0^2},\ \frac{1}{\gamma_{\mathrm{dec}}^2c_\Delta^2\,\varepsilon^2}\right\}
	\ \le\
	\left(\frac{1}{\Delta_0^2}+\frac{1}{\gamma_{\mathrm{dec}}^2c_\Delta^2}\right)\varepsilon^{-2}.
	\]
	Combining with the decrease bound yields $N_s\le C_s\,\varepsilon^{-2}$ for a constant $C_s$ independent of $\varepsilon$.
	
	Criticality-loop overhead.
By Lemma~\ref{lem:conv-repair-finite}, $N_k^{\mathrm{rep,base}}\le T_{\mathrm{try}}+2n$ for every iteration.
It remains to bound $\sum_k N_k^{\mathrm{rep,crit}}$.
	Let $N_{\mathrm{crit}}:=\sum_{k=0}^{K-1}L_k$ denote the total number of extra criticality shrinks across all
	$K$ main iterations (where $L_k\ge 0$ is the number of extra shrinks at iteration~$k$).
	Each extra shrink reduces $\log\Delta_k$ by $\log(1/\gamma_{\mathrm{dec}})$, while only very-successful
	iterations can increase $\log\Delta_k$ (by at most $\log\gamma_{\mathrm{inc}}$).
	Since $\Delta_k\in[\underline\Delta_\varepsilon,\Delta_{\max}]$ for all $k<K$, a log-radius potential argument gives
	\[
	N_{\mathrm{crit}}\,\log(1/\gamma_{\mathrm{dec}})
	\;\le\;
	\log(\Delta_{\max}/\underline\Delta_\varepsilon) + N_s\log\gamma_{\mathrm{inc}},
	\]
	so $N_{\mathrm{crit}}\le C_{\mathrm{crit}}\,\varepsilon^{-2}$ for a constant $C_{\mathrm{crit}}$ depending
	only on the algorithm parameters
	(using $\log(1/\varepsilon)\le \varepsilon^{-2}$ for $\varepsilon\in(0,1]$
	and $N_s\le C_s\varepsilon^{-2}$).
By Lemma~\ref{lem:conv-repair-finite}, each extra shrink costs at most $T_{\mathrm{try}}+2n$ evaluations, so
\[
\sum_{k=0}^{K-1} N_k^{\mathrm{rep,crit}}
\;\le\; N_{\mathrm{crit}}(T_{\mathrm{try}}+2n)
\;\le\; C_{\mathrm{crit}}(T_{\mathrm{try}}+2n)\,\varepsilon^{-2}.
\]

	Now let $N_u$ be the number of unsuccessful iterations in $\{0,\ldots,K-1\}$ and let
	$N_{++}:=\#\{k<K:\rho_k\ge\eta_2\}$ count the very-successful iterations (which may increase $\Delta_k$); since every very-successful iteration is successful, $N_{++}\le N_s$.
	Define the log-radius potential $\psi_k:=\log\Delta_k$. At each iteration:
	unsuccessful $\Rightarrow$ $\psi_{k+1}\le \psi_k-\log(1/\gamma_{\mathrm{dec}})$;
	very-successful $\Rightarrow$ $\psi_{k+1}\le \psi_k+\log\gamma_{\mathrm{inc}}$;
	otherwise $\psi_{k+1}\le\psi_k$.
Since $\Delta_k\in[\underline\Delta_\varepsilon,\Delta_{\max}]$ for all
$k<K$, the total decrease in $\psi_k$ is bounded by the available range
plus the total increase, giving
\[
N_u\,\log(1/\gamma_{\mathrm{dec}})
\;\le\;
\log(\Delta_{\max}/\underline\Delta_\varepsilon) + N_{++}\log\gamma_{\mathrm{inc}}
\;\le\;
\log(\Delta_{\max}/\underline\Delta_\varepsilon) + N_s\log\gamma_{\mathrm{inc}}.
\]
Since
\[
\underline\Delta_\varepsilon
\ge
\min\{\Delta_0,\gamma_{\mathrm{dec}}c_\Delta\varepsilon\},
\]
we obtain
\begin{align*}
\log(\Delta_{\max}/\underline\Delta_\varepsilon)
&\le \log(\Delta_{\max}/\Delta_0)
+ \log(1/(\gamma_{\mathrm{dec}}c_\Delta))
+ \log(1/\varepsilon).
\end{align*}
Using $\log(1/\varepsilon)\le\varepsilon^{-2}$ for $\varepsilon\in(0,1]$
and $N_s\le C_s\varepsilon^{-2}$, we obtain
$N_u\le C_u\varepsilon^{-2}$ for a constant $C_u$ depending only on the
algorithm parameters.
	
	Hence the total number of main iterations satisfies $K=N_s+N_u \le C_K\,\varepsilon^{-2}$ for a constant $C_K$.
	Combining the base and criticality components:
	\[
	\sum_{k=0}^{K-1}\bigl(N_k^{\mathrm{trial}}+N_k^{\mathrm{rep,base}}+N_k^{\mathrm{rep,crit}}\bigr)
	\;\le\;
K(1+T_{\mathrm{try}}+2n) + N_{\mathrm{crit}}(T_{\mathrm{try}}+2n)
\;\le\;
C_{\mathrm{eval}}\,\varepsilon^{-2},
\]
where $C_{\mathrm{eval}}:=(1+T_{\mathrm{try}}+2n)\,C_K + (T_{\mathrm{try}}+2n)\,C_{\mathrm{crit}}$.
The fixed initial-design cost is absorbed into $C_{\mathrm{eval}}$, since $\varepsilon\in(0,1]$.
\end{proof}

\section{Geometry Repair Details}\label{app:repair-details}

This appendix provides the implementation details of the geometry
repair mechanism summarized in Section~\ref{subsec:repair-acq}.

\subsection{Incremental repair procedure}\label{app:repair-incr}

Candidate pool alternatives.
In higher dimensions the probability that a uniformly sampled
point satisfies the replace-one test \eqref{eq:repair-feasible}
decreases, so the incremental phase may defer to the fallback
more frequently.
Alternative candidate strategies---such as sampling along
directions of maximal predictive variance, using orthogonal
designs, or perturbing the coordinate frame of
$Y_k^{\mathrm{fb}}$---can increase the hit rate without
affecting the theoretical guarantees.

Repair using previously evaluated points.
Before spending evaluation budget on new candidate points,
we scan the evaluation database $\mathcal{D}_k$ for previously
evaluated points that satisfy the feasibility condition
\eqref{eq:repair-feasible}.
Let
$C_k^{\mathrm{hist}} := \{y \in \mathcal{D}_k : \|y - x_k\|_2 \le
\ctrim\Delta_k\} \setminus Y_k$
be the set of nearby previously evaluated points outside the
interpolation set.
We check \eqref{eq:repair-feasible} for each $y \in C_k^{\mathrm{hist}}$,
ranked by GP posterior variance
$\sigma_k^2(y) := \mathrm{Var}[f(y) \mid \mathcal{D}_k]$
in decreasing order (so that the most informative points are tried first).
Each feasible previously evaluated point is swapped in at zero evaluation cost
(its function value is already in $\mathcal{D}_k$).
If MAP-poisedness is restored during this scan, no further evaluations
are needed.
This phase preserves the worst-case bound in
Lemma~\ref{lem:conv-repair-finite} (which counts only new evaluations)
and reduces the expected repair overhead, particularly in later
iterations when $|\mathcal{D}_k|$ is large.

\begin{remark}[A computational shortcut]\label{rem:repair-leverage}
	Evaluating \eqref{eq:repair-jstar} exactly may be expensive when $m$ is large. A common shortcut is to drop a point based
	on leverage scores with respect to
	\[
	B_k(Y):=\widehat A(Y)\widehat W_k^{-1/2},
	\qquad
	\widehat M_k(Y)=B_k(Y)B_k(Y)^\top.
	\]
	For full row rank, the leverage score of the $i$-th row $b_i^\top$ of $B_k(Y)$ is
	\begin{equation}\label{eq:repair-leverage}
		\ell_i(Y):=b_i^\top\big(\widehat M_k(Y)\big)^{-1}b_i.
	\end{equation}
	Dropping the point with the largest leverage score often improves conditioning in practice. Our guarantees,
	rely on the feasibility condition \eqref{eq:repair-feasible} and the fallback reset below.
\end{remark}

\begin{remark}[Computational cost of the replace-one test]\label{rem:repair-cost}
	A na\"ive implementation of \eqref{eq:repair-jstar} requires
	$m \cdot N_{\mathrm{cand}}$ eigenvalue computations of
	$(m{+}1)\times(m{+}1)$ matrices, each costing $\mathcal{O}(m^3)$,
	for a total of $\mathcal{O}(m^4 N_{\mathrm{cand}})$ per repair attempt.
	In practice, once $\widehat M_k(Y_k)$ and its Cholesky factor are
	available, each replace-one update can be evaluated via a rank-one
	Cholesky update/downdate at cost $\mathcal{O}(m^2)$, reducing the
	total to $\mathcal{O}(m^2 \cdot m \cdot N_{\mathrm{cand}})
	= \mathcal{O}(m^3 N_{\mathrm{cand}})$.
	For $m = 2n \le 40$ and $N_{\mathrm{cand}} = 30$, this cost is
	typically secondary compared to a single expensive function evaluation.
\end{remark}

\subsection{Fallback reconstruction}\label{app:repair-fallback-details}

Attempt-counting convention.
If $C_k^{\mathrm{geo}}=\emptyset$ for the sampled candidate
pool at a given attempt, the attempt is excluded from the count
toward $T_{\mathrm{try}}$ (since no function evaluation occurs);
a fresh candidate pool is generated and the test is repeated.
If $C_k^{\mathrm{geo}}$ remains empty after a single re-sample
(zero evaluations consumed), the algorithm proceeds directly
to the fallback.
This convention ensures that exactly one function evaluation
corresponds to each counted attempt, preserving the per-iteration
evaluation bound in \eqref{eq:repair-eval-bound-reset} and
Lemma~\ref{lem:conv-repair-finite}.

Reconstruction from previously evaluated points scoring.
Before resorting to the coordinate-direction fallback,
we attempt to assemble a MAP-poised set purely from $\mathcal{D}_k$.
Let $\mathcal{H}_k := \{y \in \mathcal{D}_k : \|y - x_k\|_2 \le \ctrim\Delta_k\}$
be the set of nearby previously evaluated points.
If $|\mathcal{H}_k| \ge m + 1$, we greedily select $m$ non-center points
scored by a surrogate-variance criterion,
\begin{equation}\label{eq:repair-hfb-score}
\mathrm{score}(y) := \sigma_k^2(y) \,\exp\!\Bigl(-\tfrac{\|y - x_k\|_2^2}{2\Delta_k^2}\Bigr),
\end{equation}
where $\sigma_k^2(y) = \mathrm{Var}[f(y) \mid \mathcal{D}_k]$
is the GP posterior variance.
If the resulting set satisfies $\lambda_{\min}(\widehat M_k) \ge \mu_M$,
the fallback is complete with zero new evaluations.
Otherwise, the algorithm proceeds to the coordinate-direction reset.
This surrogate-guided search over previously evaluated points reuses the surrogate information
when such information is available; the evaluation database is reused
throughout. The worst-case guarantee is unaffected because the coordinate
fallback is always available, while the expected repair cost can be lower
in later iterations when $|\mathcal{D}_k|$ is large.

\clearpage
\renewcommand{\thesection}{S\arabic{section}}
\renewcommand{\theHsection}{supp.\arabic{section}}
\setcounter{section}{0}
\setcounter{table}{0}
\renewcommand{\thetable}{S\arabic{table}}
\renewcommand{\theHtable}{supp.table.\arabic{table}}
\setcounter{figure}{0}
\renewcommand{\thefigure}{S\arabic{figure}}
\renewcommand{\theHfigure}{supp.figure.\arabic{figure}}

\section*{Supplementary Material}
\addcontentsline{toc}{section}{Supplementary Material}
This supplementary block records the benchmark definitions and the full
solver parameter table used in the implementation.

\section{Benchmark Problem List}\label{app:supp-problems}

Table~\ref{tab:supp-problems} lists all 17 benchmark problems used in the
numerical experiments, tested at dimensions $n\in\{5,10,20,30,50\}$
($17\times 5 = 85$ problem--dimension pairs).
Sources include the Mor{\'e}--Garbow--Hillstrom collection~\cite{more2009benchmarking},
the CUTEst test environment~\cite{gould2015cutest}, and custom engineering-inspired problems.
For problems marked ``$f^\star=$ varies'', the optimum is
dimension-dependent and is determined analytically by the benchmark
definition used in our experiments.

\small
\begin{longtable}{llcrl}
\caption{Benchmark problems (17 functions).}\label{tab:supp-problems}\\
\toprule
\textbf{Problem} & \textbf{Source} & \textbf{Dim} & \textbf{$f^\star$} & \textbf{Type} \\
\midrule
\endfirsthead
\toprule
\textbf{Problem} & \textbf{Source} & \textbf{Dim} & \textbf{$f^\star$} & \textbf{Type} \\
\midrule
\endhead
\midrule
\endfoot
\bottomrule
\endlastfoot
Rosenbrock           & Classical & 5--50 & 0 & Valley \\
DixonPrice           & Classical & 5--50 & 0 & Valley \\
Trid                 & MGH       & 5--50 & varies & Quadratic \\
Edensch              & CUTEst    & 5--50 & varies & Mixed \\
Cube                 & CUTEst    & 5--50 & 0 & Cubic \\
Genrose              & CUTEst    & 5--50 & 0 & Valley \\
ScaledRosen          & CUTEst    & 5--50 & 0 & Valley \\
Engval1              & CUTEst    & 5--50 & varies & Mixed \\
Fletchcr             & CUTEst    & 5--50 & 0 & Cyclic \\
Nondquar             & Toint     & 5--50 & 0 & Quartic \\
Quartc               & CUTEst    & 5--50 & 0 & Quartic \\
Himmelbh             & CUTEst    & 5--50 & 0 & Himmelblau \\
Bdqrtic              & CUTEst    & 5--50 & 0 & Quartic \\
Cragglvy             & CUTEst    & 5--50 & 0 & Mixed \\
ReactorCascade       & Custom    & 5--50 & varies & Engineering \\
PipelineDesign       & Custom    & 5--50 & varies & Engineering \\
SeqProcess           & Custom    & 5--50 & varies & Engineering \\
\end{longtable}
\normalsize

\section{Solver Parameters}\label{app:supp-params}

Shared trust-region parameters are listed in Table~\ref{tab:exp-params}
(Section~\ref{subsec:exp-setup}).
Table~\ref{tab:supp-params-bup} below provides the full list of
BUP-NEWUOA-specific parameters, complementing the summary in
Table~\ref{tab:exp-params}.

\clearpage
\begin{table}[p]
\centering
\caption{BUP-NEWUOA-specific parameters.
Parameters not listed use the NEWUOA defaults.}\label{tab:supp-params-bup}
\scriptsize
\setlength{\tabcolsep}{2pt}
\begin{tabularx}{\textwidth}{@{}l l X@{}}
\toprule
\textbf{Parameter} & \textbf{Default} & \textbf{Description} \\
\midrule
\multicolumn{3}{l}{\textbf{Accepted-Model Prior / Local Statistics}} \\
\texttt{prior\_mode}         & merged & Accepted-model prior with local WLS scale calibration \\
\texttt{tau}                 & 0.3   & Prior strength coefficient \\
\texttt{hess\_prior\_scale}  & 0.3   & Relative scale for Hessian-block precision \\
\texttt{hess\_decay} ($\lambda_{\mathrm{decay}}$) & 1.5 & Structured Hessian decay rate \\
\texttt{wls\_align}          & B     & Accepted-model center-shift option \\
\texttt{w\_min}              & 0.1   & Minimum precision after clipping \\
\texttt{w\_max}              & 100.0 & Maximum precision after clipping \\
\midrule
\multicolumn{3}{l}{\textbf{Data Pool / Optional GP Prior}} \\
\texttt{pool\_radius\_ratio} & 5.0   & Local data-pool radius multiplier \\
\texttt{max\_pool\_gp}       & $\min(100, 10(n{+}1))$ & Maximum local pool size for GP/WLS fitting \\
\texttt{gp\_length\_scale}   & 2.0   & Length scale for optional GP prior \\
\texttt{gp\_refit\_interval} & 3     & Refit interval for optional GP prior \\
\texttt{map\_threshold}      & 3     & Iteration threshold for MAP trigger \\
\midrule
\multicolumn{3}{l}{\textbf{MAP Gating}} \\
\texttt{gate\_cos\_sim}      & 0.3   & Minimum cosine similarity for accepting MAP model \\
\texttt{gate\_norm\_ratio}   & 10.0  & Maximum gradient-norm ratio for accepting MAP model \\
\texttt{gate\_event\_norm}   & 5.0   & Maximum norm ratio for event-triggered MAP model \\
\texttt{gate\_eig\_ratio}    & 10.0  & Maximum eigenvalue ratio for event-triggered MAP model \\
\midrule
\multicolumn{3}{l}{\textbf{Trust Region and Restart}} \\
\texttt{restart}             & True  & Enable implementation restart heuristic \\
\texttt{max\_restarts}       & 2     & Maximum number of implementation restarts \\
\texttt{rho\_restart\_factor}& 0.1   & Initial-radius factor after restart \\
\texttt{rho\_tau\_decay}     & True  & Radius-based prior-strength decay switch \\
\texttt{rho\_tau\_schedule}  & log   & Prior-strength decay schedule \\
\midrule
\multicolumn{3}{l}{\textbf{Ablation Switches (All True by Default)}} \\
\texttt{use\_map\_model}     & True  & Enable MAP model construction \\
\texttt{use\_gp\_geometry}   & True  & Enable distance-enhanced geometry step \\
\texttt{use\_data\_pool}     & True  & Enable evaluated data pool \\
\texttt{use\_periodic\_corr} & False & Enable periodic MAP correction \\
\midrule
\multicolumn{3}{l}{\textbf{Geometry Repair}} \\
$\mu_M$                      & $0.1 \cdot \mu_0$ & MAP-poisedness threshold \\
\texttt{T\_try}              & 3     & Repair attempts per geometry round \\
\texttt{N\_cand}             & 30    & Candidate points per repair round \\
\texttt{shrink\_safeguard}   & 0.5   & Minimum ratio for adaptive shrinkage safeguard \\
\bottomrule
\end{tabularx}
\end{table}
\clearpage

\end{document}